\newtheorem{theorem}{Theorem}[section]
\newtheorem{lemma}[theorem]{Lemma}
\newtheorem{remark}[theorem]{Remark}
\numberwithin{equation}{section}  
  \newcounter{mnote}
  \let\oldmarginpar\marginpar
    \renewcommand\marginpar[1]{\-\oldmarginpar[\raggedleft\footnotesize #1]%
    {\raggedright\footnotesize #1}}
\definecolor{myblue}{rgb}{0.2,0.2,0.7}
\definecolor{mygreen}{rgb}{0,0.6,0}
\definecolor{mycyan}{rgb}{0,0.6,0.6}
\definecolor{myred}{rgb}{0.9,0.2,0.2}
\definecolor{mymagenta}{rgb}{0.9,0.2,0.9}
\definecolor{mywhite}{rgb}{1.0,1.0,1.0}
\definecolor{myblack}{rgb}{0.0,0.0,0.0}
\newcommand{\beq}{\begin{equation}}
\newcommand{\eeq}{\end{equation}}
\newcommand{\beqa}{\begin{eqnarray}}
\newcommand{\eeqa}{\end{eqnarray}}
\newcommand{\trace}{\mbox{Tr}}
\newcommand{\weakconvg}{\rightharpoonup}
\begin{document}

\title[A Nonlinear Elasticity Model of Conformational Change]
      {A Nonlinear Elasticity Model of Macromolecular Conformational 
       Change Induced by Electrostatic Forces}

\author[Y.C. Zhou]{Yongcheng Zhou}

\author[M. Holst]{Michael Holst}

\author[J.A. McCammon]{James Andrew McCammon}

\email{mholst@math.ucsd.edu}
\address{Department of Mathematics\\
         University of California San Diego\\ 
         La Jolla CA 92093}

\address{Center for Theoretical Biological Physics\\
         Howard Hughes Medical Institute\\ 
         Department of Chemistry and biochemistry\\ 
         University of California San Diego\\ 
         La Jolla CA 92093}

\thanks{MH was supported in
part by NSF Awards 0411723, 022560 and 0511766, in part by DOE
Awards DE-FG02-04ER25620 and DE-FG02-05ER25707, and in part by NIH
Award P41RR08605.}
\thanks{YZ and JAM were supported in part by
the National Institutes of Health, the National Science Foundation,
the Howard Hughes Medical Institute, the National Biomedical
Computing Resource, the National Science Foundation Center for
Theoretical Biological Physics, the San Diego Supercomputing Center,
the W. M. Keck Foundation, and Accelrys, Inc.}

\date{July 22, 2007}

\keywords{
Macromolecular Conformational Change,
Nonlinear Elasticity,
Continuum Modeling,
Poisson-Boltzmann equation,
Electrostatic Force,
Coupled System,
Fixed Point
}

\begin{abstract}
In this paper we propose a nonlinear elasticity model of
macromolecular conformational change (deformation) induced by
electrostatic forces generated by an implicit solvation model.
The Poisson-Boltzmann equation for the
electrostatic potential is analyzed in a domain varying with the
elastic deformation of molecules, and a new continuous model of the
electrostatic forces is developed to ensure solvability of
the nonlinear elasticity equations. We 
derive the estimates of electrostatic forces corresponding to four types 
of perturbations to an electrostatic potential field, and establish the
existance of an equilibrium configuration using a fixed-point argument,
under the assumption that the change in the ionic strength and charges due to
the additional molecules causing the deformation are sufficiently small.
The results are valid for elastic models with arbitrarily complex 
dielectric interfaces and cavities, and can be generalized to large elastic 
deformation caused by high ionic strength, large charges, and strong 
external fields by using continuation methods.
\end{abstract}

\maketitle


{\footnotesize
\tableofcontents
}
\vspace*{-0.5cm}

\section{An Electro-Elastic Model of Conformational Change}
A number of fundamental biological processes rely on the conformational
change of biomolecules and their assemblies. For instance, proteins
may change their configurations in order to undertake new functions,
and molecules may not bind or optimally bind to each other to form
new functional assemblies without appropriate conformational change
at their interfaces or other spots away from binding sites. An
understanding of mechanisms involved in biomolecular conformational
changes is therefore essential to study structures, functions and
their relations of macromolecules. Molecular dynamics (MD)
simulations have proven to be very useful in reproducing the
dynamics of atomistic scale by tracing the trajectory of each atom
in the system \cite{Adcock_MD}. Despite the rapid progress made in
the past decade mainly due the explosion of computer power and
parallel computing, it remains a significant challenge for MD to
study large-scale conformational changes occurring on time-scales
beyond a microsecond \cite{Thorpe_BiophyJ06}. Various coarse-grained
models and continuum mechanics models are developed in this
perspective to complement the MD simulations and to provide
computational tools that are not only able to capture
characteristics of the specific system, but also able to treat large
length and time scales. The prime coarse-grained approaches are the
elastic network models, which describe the biomolecules to be beads,
rods or domains connected by springs or hinges according to the
pre-analysis of their rigidity and the connectivity. Elastic network
models are usually combined with normal mode analysis (NMA) to
extract the dominant modes of motions, and these modes are then used
to explore the structural dynamics at reduced cost \cite{Tama_review06}. Continuum models
do not depend on these rigidity or connectivity analysis. On the
contrary, the rigidity of the structure shall be able to be derived
from the results of the continuum simulations. Typical continuum
models for biomolecular simulations include the elastic deformation
of lipid bilayer membranes \cite{Feng_membrane06} and the gating of
mechanosensitive ion channels \cite{QCui_05channelgating} induced by
given external mechanical loads. It is expected that with more
comprehensive continuum models we will be able to simulate the
variation of the mechanical loads on the macromolecules
with their conformational change, and investigate the dynamics of
molecules by coupling the loads and deformation. This article takes
an important step in this direction by describing and analyzing the
first mathematical model for the interaction between the nonlinear
elastic deformation and the electrostatic potential field of
macromolecules. Such coupled nonlinear models have tremendous
potential in the study of configuration changes and structural
stability of large macromolecules such as nucleic acids, ribosomes
or microtubules during various electrostatic interactions.

Our model is described below. Let $\Omega \in \mathbb{R}^3$ be a smooth open domain whose boundary is noted
as $\partial \Omega$; see Fig.(\ref{fig_domaindef}).
Let the space occupied by the flexible molecules $\Omega_{mf}$ be a smooth subdomain of $\Omega$,
while the space occupied by the rigid molecule(s) is denoted by $\Omega_{mr}$.
Let the remaining space occupied by the aqueous solvent be $\Omega_s$.
The boundaries of $\Omega_{mf}$ and $\Omega_{mr}$ are denoted by $\Gamma_{f}$ and $\Gamma_{r}$, respectively.
We assume that the distance between molecular surfaces and $\partial \Omega$
\begin{eqnarray}
\min \left \{ | x - y | : x \in \Gamma_{f} \cup \Gamma_{r}, y \in \partial \Omega \right \} \label{distance}
\end{eqnarray}
is sufficiently large so that the Debye-H\"uckel approximation can
be employed to determine a highly accurate approximate boundary
condition for the Poisson-Boltzmann equation. There are charges
atoms located inside $\Omega_{mf}$ and $\Omega_{mr}$, and changed mobile ions
in $\Omega_s$. The electrostatic potential field generated by these
charges induces electrostatic forces on the molecules $\Omega_{mf}$ and $\Omega_{mr}$. 
These forces will in turn cause the configuration change of the molecules. We shall model
this configuration rearrangement as an elastic deformation in this study.
Specifically, we will investigate the elastic deformation of
molecule $\Omega_{mf}$ (which is originally in a free state and not
subject to any {\it net} external force) induced by adding molecule
$\Omega_{mr}$ and changing mobile charge density in $\Omega_s$. This
body deformation leads to the displacement of charges in
$\Omega_{mf}$ and the dielectric boundaries, which simultaneously
lead to change of the entire electrostatic potential field. It is
therefore interesting to investigate if the deformable molecule
$\Omega_{mf}$ has a final stable configuration in response to the
appearance of $\Omega_{mr}$ and the change of mobile charge density.
\begin{figure}[!ht] \label{fig_domaindef}
\begin{center}
\includegraphics[width=7cm]{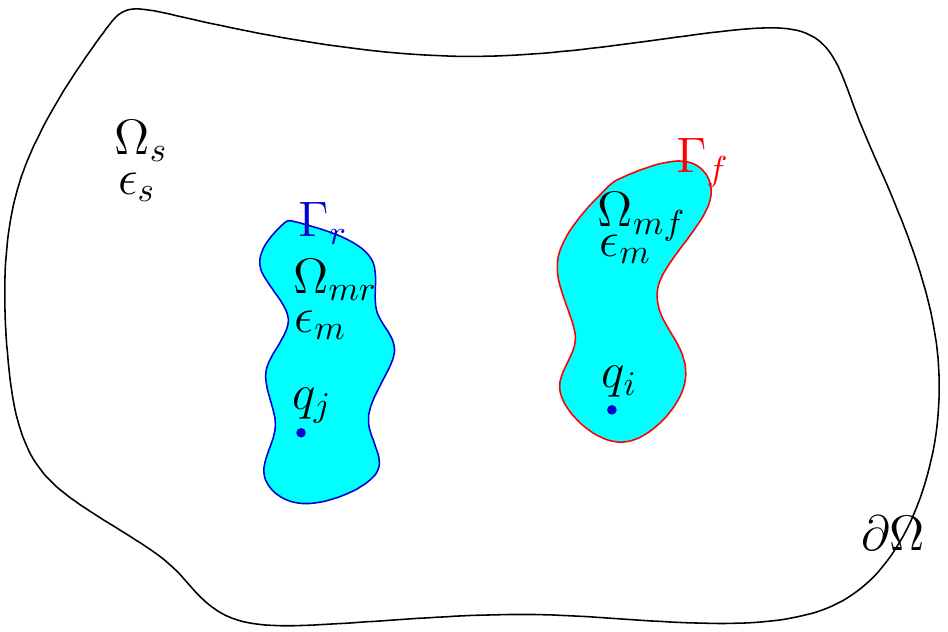}
\caption{Illustration of macromolecules immersed in aqueous solvent environment.}
\end{center}
\end{figure}

Within the framework of an implicit solvent model which treats the
aqueous solvent in $\Omega_s$ as a structure-less dielectric, the
electrostatic potential field of the system is described by the
Poisson-Boltzmann equation (PBE)
\begin{eqnarray}
-\nabla \cdot (\epsilon \nabla \phi) + \kappa^2 \sinh( \phi) = \sum_{i}^{N_f + N_r} q_i \delta(x_i) ~\mbox{in}~\Omega,
\label{PBE_1}
\end{eqnarray}
where $\delta(x_i)$ is the Dirac distribution function at $x_i$,
$N_f + N_r$ is the number of singular charges of the system
including the charges in $\Omega_{mf}$ (i.e. $N_f$) and
$\Omega_{mr}$ (i.e., $N_r$). The dielectric constant $\epsilon$ and
the modified Debye-H\"uckel parameter $\kappa$ are piecewise
constants in domains $\Omega_{mf}, \Omega_{mr}$ and $\Omega_s$. In
particular, $\kappa = 0$ in $\Omega_{mf}$ and $\Omega_{mr}$ because
it models the free mobile ions which appear only in the solvent
region $\Omega_s$. The dielectric constant in the molecule and that
in the solvent are denoted with $\epsilon_m$ and $\epsilon_s$,
respectively. Readers are referred to \cite{Baker_04,Honig_90} for
the importance of the Poisson-Boltzmann equation in biomolecular
electrostatic interactions, and to
\cite{Chen_rpbe,HolstSaied_PBE,HolstBakerWang_PBE,Holst_thesis,HXZhou_96,ZhouMIBPB,Ben_bemPNAS}
for the mathematical analysis as well as various numerical methods
for the Poisson-Boltzmann equation.

The finite(large) deformation of molecules is essential to our coupled model, but
can not be described by a linear elasticity theory. We
therefore describe the displacement vector field $\mathbf{u}(x)$ of
the flexible molecule $\Omega_{mf}$ with a nonlinear elasticity
model:
\begin{equation}
\begin{split} \label{Stru.eq.1}
-\mathrm{div} ( \mathbf{T}(\mathbf{u}) )  = &~ \mathbf{f}_b \quad \mbox{in} \quad \Omega^0_{mf}  \\
\mathbf{T}(\mathbf{u}) \cdot \mathbf{n}  =  &~ \mathbf{f}_s \quad \mbox{on} \quad \Gamma^0_f
\end{split}
\end{equation}
where $\mathbf{f}_b$ is the body force, $\mathbf{f}_s$ is the surface force and $\mathbf{T}(\mathbf{u})$ is the second Piola-Kirchhoff stress tensor.
In this study we assume the macromolecule is a continuum medium obeying the St Venant-Kirchhoff law, and hence its stress tensor is
given by the linear(Hooke's law) stress-strain relation for an
isotropic homogeneous medium:
\begin{eqnarray*}
\mathbf{T}(\mathbf{u}) = (\mathrm{I} + \nabla \mathbf{u}) [
\lambda \trace(\mathbf{E}(\mathbf{u})) \mathrm{I} + 2 \mu \mathbf{E}(\mathbf{u})].
\end{eqnarray*}
Here $\lambda > 0$ and $\mu >0 $ are the Lam\'e constants of the medium, and
\begin{eqnarray*}
\mathbf{E}(\mathbf{u}) = \frac{1}{2} (\nabla \mathbf{u}^{T} + \nabla \mathbf{u} + \nabla \mathbf{u}^{T}  \nabla \mathbf{u})
\end{eqnarray*}
is the nonlinear strain tensor. The equation (\ref{Stru.eq.1}) is
nonlinear due to the Piola transformation ($\mathrm{I} + \nabla
\mathbf{u}$) in $\mathbf{T}(\mathbf{u})$, and the
quadratic term in the nonlinear strain $\mathbf{E}(\mathbf{u})$. The
third potential source of nonlinearity, namely a nonlinear
stress-strain relation, is not considered here; however, our methods
apply to this case as well.

It is noted that Eq.(\ref{Stru.eq.1}) is defined in the undeformed molecule body $\Omega^0_{mf}$ with
undeformed boundary $\Gamma^0_{f}$, while the Poisson-Boltzmann equation (\ref{PBE_1}) holds true for real 
deformed configurations. The deformed configuration is unknown before we solved the coupled system.  
We therefore define a displacement-dependent mapping $\Phi(\mathbf{u})(x):\Omega^0 \to \Omega$ and apply
this mapping to the the Poisson-Boltzmann equation such that it can also be analyzed on the undeformed molecular configuration. In $\overline{\Omega}_{mf}$ this map $\Phi(\mathbf{u})(x)$ is $\mathcal{I} + \mathbf{u}$ where
$\mathcal{I}$ is the identity mapping. A key technical tool in our work is that this mapping
is then harmonically extended to $\Omega$ to obtain
the maximum smoothness. Apply this mapping, the Poisson-Boltzmann equation \ref{PBE_1} changes to be
\begin{eqnarray}
-\nabla \cdot (\epsilon \mathbf{F}(\mathbf{u}) \nabla \phi) + J(\mathbf{u}) \kappa^2 \sinh( \phi) =
\sum_{i}^{N_f + N_r} J(\mathbf{u}) q_i \delta(\Phi(x) - \Phi(x_i)) ~\mbox{in}~\Omega, \label{PBE_2}
\end{eqnarray}
where $J(\mathbf{u})$ is the Jacobian of $\Phi(\mathbf{u})$ and
\begin{eqnarray}
\mathbf{F}(\mathbf{u}) = (\nabla \Phi(\mathbf{u}) )^{-1} J(\mathbf{u}) (\nabla \Phi(\mathbf{u}) )^{-T}. \label{Fu_def}
\end{eqnarray}
This matrix $\mathbf{F}$ is well defined whenever $\Phi(\mathbf{u})$ is a $C^1$-diffeomorphism \cite{Grandmont_3Dcouple}.
The functions in Eq.(\ref{PBE_2}) should be interpreted as the compositions of respective functions in Eq.(\ref{PBE_1})
with mapping $\Phi(x)$, i.e., $\phi(x) = \phi(\Phi(x)), \epsilon(x) = \epsilon(\Phi(x))$ and $\kappa(x) = \kappa(\Phi(x))$.

In this paper, we shall analyze the existence of the coupled
solution of the elasticity equation (\ref{Stru.eq.1}) and the
transformed Poisson-Boltzmann equation (\ref{PBE_2}). These two
equations are coupled through displacement mapping
$\Phi(\mathbf{u})$ in the Poisson-Boltzmann equation and the
electrostatic forces to be defined later. The solution of this
coupled system represents the equilibrium between the elastic stress
of the biomolecule and the electrostatic forces to which the
biomolecule is subjected. The existence, the uniqueness and the
$W^{2,p}$-regularity of the elasticity solution have already been
established by Grandmont \cite{Grandmont_3Dcouple} in studying the
coupling of elastic deformation and the Navier-Stokes equations;
thus in this work we shall focus on the solution to the transformed
Poisson-Boltzmann equation and to the coupled system.
We shall define a mapping $S$ from an appropriate space $X_p$ of displacement field $\mathbf{u}$ into itself,
and seek the fixed-point of this map. This fixed-point, if it exists, will be the solution of the coupled system.
A critical step in defining $S$ is the harmonic extension of the Piola transformation
from $\Omega_{mf}$ to $\Omega$ and $\mathbb{R}^3$. The regularity of the Piola transformation determines not only the existence of the solution to the transformed Poisson-Boltzmann equation, but also the existence of the solution to the coupled system.
Because most of our analysis will be carried out on the undeformed configuration we will still use $\Omega_{mf}, \Omega_{mr}, \Omega_s, \Gamma_f, \Gamma_r$ to denote the undeformed configurations of molecules, the solvent and the molecular interfaces, unless otherwise specified.

The paper is organized as follows. In Section \ref{Prelim} we review
a fundamental result concerning the piecewise $W^{2,p}$-regularity
of the solutions to elliptic equations in non-divergence form and
with discontinuous coefficients. The nonlinear elasticity equation
will be discussed in Section \ref{ElasticEq}, where the major
results from \cite{Grandmont_3Dcouple} are presented without proof.
The Piola transformation will be defined, harmonically extended, and
then analyzed. In Section \ref{PBEsolution} we will prove the
existence and uniqueness of the solution to the Piola-transformed
Poisson-Boltzmann equation, generalizing the results in
\cite{Chen_rpbe} for the un-transformed case. Both $L^{\infty}$ and
$W^{2,p}$ estimates will be given for the electrostatic potential in
the solvent region, again generalizing results in \cite{Chen_rpbe}.
We will then define the electrostatic forces and estimate
these forces by decomposing them into components corresponding to four
independent perturbation steps. The estimates of these components
are obtained separately and the final estimate of the surface force
is assembled from these individual estimates. The coupled system
will be finally considered in Section \ref{CoupledSystem} where the
mapping $S$ will be defined, and the main result of the paper will
be established by applying a fixed-point theorem on this map to give
the existence of a solution of the coupled system.

\section{Notation and Some Basic Estimates} \label{Prelim}
In what follows $W^{k,p}(\mathcal{D})$ will denote the standard
Sobolev space on an open domain $\mathcal{D}$, where $\mathcal{D}$
can be $\Omega, \Omega_{m}$ or $\Omega_s$. While solutions of the
Poisson-Boltzmann have low global regularity in $\Omega$, we will
need to explore and exploit the optimal regularity of the solution
in any subdomain of $\Omega$. For this purpose, we define
$\mathcal{W}^{2,p}(\Omega) = W^{2,p}(\Omega_m) \dotplus
W^{2,p}(\Omega_s)$ where $\dotplus$ is the direct sum. Every
function $\phi \in \mathcal{W}^{2,p}$ can be written as $\phi(x) =
\phi_m(x) + \phi_s(x)$ where $\phi_m(x) \in W^{2,p}(\Omega_m),
\phi_s(x) \in W^{2,p}(\Omega_s)$, and has a norm
\begin{eqnarray}
\| \phi \|_{\mathcal{W}^{2,p}} = \| \phi_m \|_{W^{2,p}(\Omega_m)} + \| \phi_s \|_{W^{2,p}(\Omega_s)}. \label{w2p_norm}
\end{eqnarray}
Similarly, we define a class of functions $\mathcal{C}=\mathcal{C}(\Omega)$
which are continuous in either subdomain and may have finite jump on the interface, i.e., a
function $a \in \mathcal{C}$ is given by $a=a_m + a_s$ where $a_m \in C(\Omega_m), a_s \in C(\Omega_s)$
are continuous functions in their respective domains. The norm in $\mathcal{C}$ is defined by
$$\|a \|_{\mathcal{C}} = \|a_m \|_{C(\Omega_m)} + \|a_s \|_{C(\Omega_s)}.$$

We recall two important results. The first is a technical lemma which will be used for the
estimation of the product of two $W^{1,p}$ functions; this is sometimes called the Banach
algebra property.
\begin{lemma} \label{w1p_algebra}
Let $ 3<p<\infty, 1 \le q \le p$ be two real numbers. Let $\Omega$
be a domain in $\mathbb{R}^3$. Let $u \in W^{1,p}(\Omega), v \in
W^{1,q}(\Omega)$, then their product $uv$ belongs to $W^{1,q}$, and
there exists a constant $C$ such that
$$ \| uv \|_{W^{1,q}(\Omega)} \le C \| u \|_{W^{1,p}(\Omega)} \| v \|_{W^{1,q}(\Omega)}.$$
\end{lemma}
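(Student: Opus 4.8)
The plan is to reduce everything to the Sobolev embedding theorem together with Hölder's inequality, exploiting the fact that the exponent $p$ is supercritical for the dimension $n=3$. The crucial observation is that since $3<p<\infty$, Morrey's embedding gives $W^{1,p}(\Omega)\hookrightarrow L^\infty(\Omega)$ (indeed into $C^0(\overline\Omega)$), so that $\|u\|_{L^\infty(\Omega)}\le C\|u\|_{W^{1,p}(\Omega)}$. Using the (equivalent) norm $\|w\|_{W^{1,q}(\Omega)}\approx\|w\|_{L^q(\Omega)}+\|\nabla w\|_{L^q(\Omega)}$ and the product rule $\nabla(uv)=u\,\nabla v+v\,\nabla u$, it suffices to bound the three pieces $\|uv\|_{L^q}$, $\|u\,\nabla v\|_{L^q}$, and $\|v\,\nabla u\|_{L^q}$. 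The first two are immediate: $\|uv\|_{L^q}\le\|u\|_{L^\infty}\|v\|_{L^q}$ and $\|u\,\nabla v\|_{L^q}\le\|u\|_{L^\infty}\|\nabla v\|_{L^q}$, and both are controlled by $C\|u\|_{W^{1,p}}\|v\|_{W^{1,q}}$ via the $L^\infty$ embedding.

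The only genuinely interesting term is $\|v\,\nabla u\|_{L^q}$, where the bounded factor $u$ is no longer available. Here I would apply Hölder's inequality with the exponents $r$ and $p$ satisfying $\tfrac{1}{q}=\tfrac{1}{r}+\tfrac{1}{p}$, i.e. $\tfrac{1}{r}=\tfrac{1}{q}-\tfrac{1}{p}$; this is admissible precisely because $q\le p$ guarantees $\tfrac1r\ge0$. Then $\|v\,\nabla u\|_{L^q}\le\|v\|_{L^r}\|\nabla u\|_{L^p}\le\|v\|_{L^r}\|u\|_{W^{1,p}}$, and it remains to control $\|v\|_{L^r}$ by $\|v\|_{W^{1,q}}$. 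This follows from Sobolev embedding applied to $v\in W^{1,q}$, with a short case analysis on the position of $q$ relative to the critical value $n=3$: if $q<3$ one has $W^{1,q}\hookrightarrow L^{q^*}$ with $q^*=\tfrac{3q}{3-q}$, and since $p>3$ one checks $\tfrac{1}{r}=\tfrac{1}{q}-\tfrac{1}{p}>\tfrac{1}{q}-\tfrac{1}{3}=\tfrac{1}{q^*}$, so $q\le r<q^*$ and $v\in L^r$ follows either by the embedding $L^{q^*}\hookrightarrow L^r$ or, to avoid any boundedness assumption on $\Omega$, by interpolating $L^q$ and $L^{q^*}$; if $q=3$ then $W^{1,3}\hookrightarrow L^s$ for every $s<\infty$ and $r<\infty$; and if $q>3$ then $v\in L^\infty$ directly, covering any $r$. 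In all cases $\|v\|_{L^r}\le C\|v\|_{W^{1,q}}$, which completes the estimate.

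I expect the main obstacle to be not the estimate itself but the rigorous justification that $uv$ actually lies in $W^{1,q}(\Omega)$ with the claimed weak-derivative formula, together with the implicit regularity hypotheses on $\partial\Omega$ needed for the embeddings and for density of smooth functions (a Lipschitz or cone condition). I would handle this by first establishing the bilinear bound for $u,v\in C^\infty(\overline\Omega)$, where the product rule is classical, and then passing to general $u\in W^{1,p}$, $v\in W^{1,q}$ by approximating with smooth functions $u_k\to u$ in $W^{1,p}$ and $v_k\to v$ in $W^{1,q}$; the already-proved estimate shows that $(u_kv_k)$ is Cauchy in $W^{1,q}(\Omega)$, its limit is $uv$, and taking the limit in the inequality yields the stated bound with the same constant $C$, which depends only on $p$, $q$, and $\Omega$.
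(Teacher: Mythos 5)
Your proof is correct, and it is in essence the same argument as the one the paper relies on: the paper does not prove this lemma at all but simply cites the standard reference (\cite{Adams_SobolevBook}), where the proof is exactly your combination of the product rule, H\"older's inequality with the splitting $\tfrac1q=\tfrac1r+\tfrac1p$, and the Sobolev/Morrey embeddings $W^{1,p}\hookrightarrow L^\infty$ (supercritical, since $p>3=n$) and $W^{1,q}\hookrightarrow L^r$ with the case analysis at $q<3$, $q=3$, $q>3$. Your exponent bookkeeping is right, including the endpoint $q=p$ (where $r=\infty$ and $v\in L^\infty$ by Morrey), and your closing density-plus-Cauchy-sequence argument to justify the weak product rule is a legitimate way to finish. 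Two small remarks: you correctly flag that the statement as written (``$\Omega$ a domain in $\mathbb{R}^3$'') is too generous --- both embeddings require some boundary regularity such as a Lipschitz or cone condition, without which $W^{1,p}\hookrightarrow L^\infty$ can fail even for $p>n$; this is harmless here since all domains in the paper are bounded with smooth boundaries. Also note that the paper only ever invokes the lemma with $p=q$ ($>3$), in which case your ``interesting'' third term trivializes because $v$ itself is bounded, so the full strength of your interpolation step is never actually needed downstream.
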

For the proof of this lemma we refer to \cite{Adams_SobolevBook}. In
this paper we will apply Lemma (\ref{w1p_algebra}) to the case with
$p=q$. The second result is a theorem concerning the $L^p$ estimate
of elliptic equations with discontinuous coefficients.
\begin{theorem} \label{w2p_theorem}
Let $\Omega$ and $ \Omega_1 \subset \subset \Omega$ be bounded domains of $\mathbb{R}^3$ with smooth boundaries $\partial \Omega$ and $\Gamma$. Let $\overline{\Omega}_1 =(\Omega_1 \cup \Gamma)$ and
$\Omega_2 = \Omega \setminus \overline{\Omega}_1$.
Let $A$ be a second order elliptic operator such that
\begin{eqnarray*}
(Au)(x) =
\begin{cases}
(A_1 u)(x)  & x \in \Omega_1 \\
(A_2 u)(x)  & x \in \Omega_2
\end{cases}, ~\mbox{where}~ A_i = \sum_{k \le2} a_{ik}(x) D^{k}.
\end{eqnarray*}
Then there exists a unique solution $u \in \mathcal{W}^{2,p}$ for the interface problem
\begin{eqnarray*}
(Au)(x) & = & f ~ \mbox{in} ~ \Omega  \\
  \left [ u \right] = u_2 - u_1 & = & 0 ~ \mbox{on}~ \Gamma  \\
  \left [ B u_n \right ]  = B_2 \nabla u_2 \cdot \mathbf{n} - B_1 \nabla u_1 \cdot \mathbf{n} & = & h ~ \mbox{on}~ \Gamma  \\
  u & = & g ~ \mbox{on}~ \partial \Omega
\end{eqnarray*}
providing that $a_{ik} \in \mathcal{C}(\Omega),
B_i \in C(\Gamma), f \in L^p(\Omega), g \in W^{2-1/p,p}(\partial \Omega),
 h \in W^{1-1/p,p}(\Gamma)$, where $\mathbf{n}$ is the outside normal to $\Omega_1$. Moreover, the following estimate holds true
\begin{eqnarray}
\hspace*{-0.4cm}
\| u \|_{\mathcal{W}^{2,p}(\Omega)} \le K \left (  \| f \|_{L^p(\Omega)} + \| h \|_{W^{1-1/p,p}(\Gamma)}
+ \| g \|_{W^{2-1/p,p}(\partial \Omega)} + \| u \|_{L^p(\Omega)}  \right ), \label{w2p_estimate}
\end{eqnarray}
where the constant $K$ depends only on $\Omega, \Omega_1, \Omega_2, p$ and the modulus of continuity of $A$.
\end{theorem}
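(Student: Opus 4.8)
The plan is to recognize \eqref{w2p_estimate} as an Agmon--Douglis--Nirenberg (ADN) type $L^p$ estimate adapted to a two-phase transmission problem, and to prove the theorem in two stages: first the a priori estimate \eqref{w2p_estimate} for any solution $u \in \mathcal{W}^{2,p}$, and then existence and uniqueness by the method of continuity combined with the Fredholm alternative. Throughout, the smoothness of $\partial\Omega$ and $\Gamma$, the uniform ellipticity of $A_1,A_2$, and the continuity of the leading coefficients on each subdomain (the hypothesis $a_{ik}\in\mathcal{C}(\Omega)$) are the structural facts that drive every estimate.

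First I would establish the a priori estimate by localization. Cover $\overline\Omega$ by finitely many balls of three types: balls whose closure avoids both $\Gamma$ and $\partial\Omega$, balls centered on $\partial\Omega$, and balls centered on $\Gamma$, together with a subordinate partition of unity $\{\zeta_j\}$. On the first two types the classical interior and Dirichlet-boundary Calder\'on--Zygmund/ADN estimates apply to $\zeta_j u$, since there the coefficients are continuous and no interface is present. The new ingredient is the interface balls, where I would flatten $\Gamma$ by a local $C^2$ diffeomorphism (available because $\Gamma$ is smooth) so that $\Gamma$ becomes a piece of hyperplane, and then freeze the leading coefficients at the center of the ball. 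This reduces the local problem to a constant-coefficient elliptic transmission problem across a hyperplane.

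The main obstacle is the model transmission estimate. For the constant-coefficient two-phase problem on a pair of abutting half-balls, with the interface conditions $[u]=0$ and $[Bu_n]=h$, I would verify that the problem satisfies the Lopatinski--Shapiro complementing condition, so that the ADN machinery yields the piecewise $W^{2,p}$ bound in terms of $\|f\|_{L^p}$, $\|h\|_{W^{1-1/p,p}}$, and $\|g\|_{W^{2-1/p,p}}$. Passing from the frozen model back to the variable-coefficient, curved interface is the usual perturbation: on a sufficiently small ball the oscillation of the leading coefficients is small (here the continuity on each side of $\Gamma$ is essential), so the leading-order error can be absorbed into the left-hand side, while the commutators $[A,\zeta_j]$ and the change-of-variable lower-order terms are of first order and contribute a term controlled by $\|u\|_{W^{1,p}}$, and hence, after interpolation, by a small multiple of $\|u\|_{\mathcal{W}^{2,p}}$ plus $\|u\|_{L^p(\Omega)}$. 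Summing $\zeta_j u$ over the partition of unity and absorbing yields precisely \eqref{w2p_estimate}, with $K$ depending only on $\Omega,\Omega_1,\Omega_2,p$ and the modulus of continuity of $A$.

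Finally I would obtain existence and uniqueness. Introduce the homotopy $A_t = (1-t)L_0 + tA$, where $L_0$ is the piecewise Laplacian in divergence form $-\nabla\cdot(B\nabla\,\cdot)$ with the natural transmission conditions, interpolating the interface coefficients $B_i$ as well; each $A_t$ is uniformly elliptic and satisfies the complementing condition, so \eqref{w2p_estimate} holds for all $t\in[0,1]$ with one constant $K$. The reference problem at $t=0$ is uniquely solvable: after homogenizing the Dirichlet and interface data, the Laplace transmission problem is solved variationally by Lax--Milgram in $H^1$ and then bootstrapped to $\mathcal{W}^{2,p}$ using the estimate just proved. Since the embedding $\mathcal{W}^{2,p}\hookrightarrow L^p$ is compact, the lower-order term $\|u\|_{L^p}$ in \eqref{w2p_estimate} is a compact perturbation, so the data map $u\mapsto(Au,[u],[Bu_n],u|_{\partial\Omega})$ is semi-Fredholm; homotopy invariance of the index together with solvability at $t=0$ makes it Fredholm of index zero. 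Uniqueness of the homogeneous transmission problem then gives injectivity, hence bijectivity, at $t=1$, which is the asserted existence and uniqueness.
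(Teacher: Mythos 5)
First, a fact you could not have known blind: the paper does not prove this theorem at all. Immediately after the statement it defers to the literature, citing Seftel \cite{Seftel_64,Seftel_65}, and thereafter uses the result as a black box. So there is no in-paper argument to compare line by line; the relevant comparison is with the approach of the cited references, and your outline --- Agmon--Douglis--Nirenberg-type localization, flattening of $\Gamma$, coefficient freezing, verification of the Lopatinski--Shapiro complementing condition for the constant-coefficient half-space transmission problem, perturbation and absorption, then Fredholm theory --- is exactly the kind of argument those references carry out. The a priori estimate half of your proposal is sound in outline, with one implicit assumption worth flagging: the complementing condition for the model problem with data $[u]=0$, $[Bu_n]=h$ requires a non-degeneracy condition on $B_1,B_2$ (e.g.\ $B_1+B_2\neq 0$, satisfied when both are positive, as they are in the paper's applications where they play the role of dielectric constants); the bare hypothesis $B_i\in C(\Gamma)$ in the statement is not enough.

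The genuine gap is in the last step of your existence/uniqueness argument. The index-zero Fredholm argument correctly reduces bijectivity at $t=1$ to injectivity, but you then simply assert that ``uniqueness of the homogeneous transmission problem gives injectivity.'' That is precisely the part of the claim that does not follow from the stated hypotheses, and in fact cannot: with no sign condition on the zeroth-order coefficients, take $A_1=A_2=\Delta+\lambda$ with $B_1=B_2=1$ and $\lambda$ a Dirichlet eigenvalue of $-\Delta$ on $\Omega$, so that the interface conditions are vacuous; the homogeneous problem then has a smooth nontrivial kernel in $\mathcal{W}^{2,p}$, and unique solvability fails. This is also why the estimate (\ref{w2p_estimate}) retains the compactness term $\| u \|_{L^p(\Omega)}$ on the right-hand side: the result, as established in the cited literature, is genuinely of Fredholm type, and unconditional uniqueness needs an extra hypothesis (e.g.\ a nonpositive zeroth-order coefficient giving a maximum principle), which does hold in the paper's downstream uses. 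A second, smaller gap: an a priori estimate only bounds solutions already known to lie in $\mathcal{W}^{2,p}$, so ``Lax--Milgram in $H^1$ then bootstrap using the estimate just proved'' does not by itself place the reference solution in $\mathcal{W}^{2,p}$; you need a separate regularity step (solve the flattened model problem explicitly and perturb, or regularize data and coefficients and pass to the limit under the uniform estimate).
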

Theorem (\ref{w2p_theorem}) is fundamental to various results about
elliptic equations with discontinuous coefficients;
For example, the global $H^1$ regularity and $H^2$ estimates of
Bab\"uska \cite{Babuska_70}, the finite element approximation of Chen
{\it et al.} \cite{ChenZM_98}, {\it a prior} estimates for second-order elliptic interface
problems \cite{ZouJ_02},
the solution theory and estimates for the nonlinear
Poisson-Boltzmann equation \cite{Chen_rpbe},
and the continuous and discrete {\em a priori} $L^{\infty}$ estimates
for the Poisson-Boltzmann equation along with a quasi-optimal
{\em a priori} error estimate for Galerkin methods \cite{Chen_rpbe}
applied to the Poisson-Boltzmann equation.
For the proof of Theorem (\ref{w2p_theorem}) and the more general conclusions
for high-order elliptic equations with high-order interface conditions we
refer to \cite{Seftel_64,Seftel_65}.

\section{Nonlinear Elasticity and the Piola Transformation} \label{ElasticEq}
We first state a theorem concerning the existence, uniqueness, regularity and the estimation of
the solution to the nonlinear elasticity equation \cite{Grandmont_3Dcouple}:
\begin{theorem} \label{thm_3Delasticity}
Let the body force $\mathbf{f}_b \in L^p(\Omega_{mf}) $ and the surface
force $\mathbf{f}_s \in W^{1-1/p,p}(\Gamma_f)$, where $3 < p <
\infty$. There exists a neighborhood of 0 in $L^p(\Omega_{mf})
\times W^{1-1/p,p}(\Gamma_f)$ such that if $(\mathbf{f}_b,
\mathbf{f}_s)$ belongs to this neighborhood then there exists a
unique solution $\mathbf{u} \in W^{2,p}(\Omega_{mf}) \cap
W^{1,p}_{0,\Gamma_{f0}}(\Omega_{mf})$ of
\begin{equation} \label{Eq_elasticity}
\begin{split}
-\mathrm{div} ( \mathbf{T} (\mathbf{u}) )  = &~ \mathbf{f}_b ~ \mbox{in} \quad \Omega_{mf}, \\
\mathbf{T}(\mathbf{u}) \mathbf{n}   = & ~\mathbf{f}_s ~ \mbox{on} \quad \Gamma_{f} \setminus \Gamma_{f0}, \\
\mathbf{u}   = &~ 0 ~ \mbox{on} \quad \Gamma_{f0},  \\
\int_{\Gamma_f} (\mathrm{I} + \nabla \mathbf{u}) J(\mathbf{u}) (\mathrm{I} + \nabla \mathbf{u})^{-T} \cdot \mathbf{n}  = &~
3 |\Omega_{mf}|,
\end{split}
\end{equation}
where $\Gamma_{f0}$ is a subset of $\Gamma_f$ equipped with homogeneous Dirichlet boundary condition,
$\mathrm{I}$ is the unit matrix. The last equation represents the incompressibility condition of
the elastic deformation. Moreover, the solution can be estimated with respect to the force data:
\begin{eqnarray}
\| u\|_{W^{2,p}(\Omega_{mf})} \le C(\|\mathbf{f}_b\|_{L^p(\Omega_{mf})} + \|\mathbf{f}_s\|_{W^{1-1/p,p}(\Gamma_f)}).  \label{estimate_u}
\end{eqnarray}
\end{theorem}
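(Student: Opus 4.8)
The plan is to obtain the statement as a local inversion result about the stress-free reference configuration $\mathbf{u}=0$, via the implicit function theorem in Banach spaces; this is the standard route to small-displacement existence for St Venant--Kirchhoff elasticity and underlies \cite{Grandmont_3Dcouple}. First I would recast (\ref{Eq_elasticity}) as a single nonlinear operator equation. With $X_p = W^{2,p}(\Omega_{mf})\cap W^{1,p}_{0,\Gamma_{f0}}(\Omega_{mf})$ and $Y_p = L^p(\Omega_{mf})\times W^{1-1/p,p}(\Gamma_f\setminus\Gamma_{f0})$, define
\begin{equation*}
\mathcal{A}:X_p\to Y_p,\qquad \mathcal{A}(\mathbf{u}) = \bigl(-\mathrm{div}\,\mathbf{T}(\mathbf{u}),\ \mathbf{T}(\mathbf{u})\mathbf{n}\bigr).
\end{equation*}
Since $p>3$ we have $W^{2,p}(\Omega_{mf})\hookrightarrow C^1(\overline{\Omega}_{mf})$, and Lemma (\ref{w1p_algebra}) makes $W^{1,p}(\Omega_{mf})$ a Banach algebra; as $\mathbf{T}(\mathbf{u})$ is polynomial in $\nabla\mathbf{u}$ (the Piola factor $\mathrm{I}+\nabla\mathbf{u}$ times an expression quadratic in $\mathbf{E}(\mathbf{u})$, itself quadratic in $\nabla\mathbf{u}$), the map $\mathbf{u}\mapsto\mathbf{T}(\mathbf{u})$ sends $X_p$ continuously into $W^{1,p}(\Omega_{mf})$. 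Hence $\mathcal{A}$ is well defined and, being polynomial, of class $C^\infty$. At the natural state $\mathbf{E}(0)=0$, so $\mathbf{T}(0)=0$ and $\mathcal{A}(0)=0$.

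The crucial step is the linearization. Differentiating at $\mathbf{u}=0$, using $\mathbf{E}'(0)\mathbf{v}=\mathbf{e}(\mathbf{v}):=\frac{1}{2}(\nabla\mathbf{v}+\nabla\mathbf{v}^{T})$ and $\mathrm{I}+\nabla\mathbf{u}\to\mathrm{I}$, the stress derivative reduces to Hooke's law,
\begin{equation*}
\mathbf{T}'(0)\mathbf{v} = \lambda\,\trace(\mathbf{e}(\mathbf{v}))\,\mathrm{I} + 2\mu\,\mathbf{e}(\mathbf{v}),
\end{equation*}
so $D\mathcal{A}(0)$ is precisely the linear elasticity (Lam\'e) operator with the mixed conditions $\mathbf{v}=0$ on $\Gamma_{f0}$ and $\mathbf{T}'(0)\mathbf{v}\,\mathbf{n}$ prescribed on $\Gamma_f\setminus\Gamma_{f0}$. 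I would then prove that $D\mathcal{A}(0):X_p\to Y_p$ is a topological isomorphism. For $\lambda,\mu>0$ the Lam\'e system is (Legendre, hence Legendre--Hadamard) strongly elliptic and the displacement/traction boundary operators satisfy the complementing (Shapiro--Lopatinskii) condition, so the Agmon--Douglis--Nirenberg $L^p$ theory for elliptic systems gives the a priori estimate $\|\mathbf{v}\|_{W^{2,p}}\le C(\|D\mathcal{A}(0)\mathbf{v}\|_{Y_p}+\|\mathbf{v}\|_{L^p})$ and makes $D\mathcal{A}(0)$ Fredholm of index zero. Injectivity follows from Korn's inequality: a kernel element solves the homogeneous system, the homogeneous Dirichlet condition on the positive-measure piece $\Gamma_{f0}$ rules out rigid motions, and Korn forces it to vanish; with index zero this yields bijectivity and a bounded inverse.

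With $\mathcal{A}\in C^\infty$, $\mathcal{A}(0)=0$, and $D\mathcal{A}(0)$ invertible, the local inverse form of the implicit function theorem supplies neighborhoods $U\ni0$ in $X_p$ and $V\ni0$ in $Y_p$ on which $\mathcal{A}$ is a diffeomorphism. Thus every $(\mathbf{f}_b,\mathbf{f}_s)\in V$ has a unique preimage $\mathbf{u}=\mathcal{A}^{-1}(\mathbf{f}_b,\mathbf{f}_s)\in U$, which is the asserted existence and uniqueness in a neighborhood of $0$. The estimate (\ref{estimate_u}) then comes from the local Lipschitz continuity of $\mathcal{A}^{-1}$: since $\mathcal{A}^{-1}(0)=0$ with $D\mathcal{A}^{-1}(0)=D\mathcal{A}(0)^{-1}$ bounded, shrinking $V$ gives $\|\mathbf{u}\|_{W^{2,p}}\le C(\|\mathbf{f}_b\|_{L^p}+\|\mathbf{f}_s\|_{W^{1-1/p,p}})$ with $C\approx\|D\mathcal{A}(0)^{-1}\|$. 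The global incompressibility/volume constraint (last line of (\ref{Eq_elasticity})) is a finite-codimension nonlinear side condition; I would fold it into the same scheme by adjoining a scalar Lagrange multiplier (a constant hydrostatic pressure) and enlarging $\mathcal{A}$ correspondingly, the only new ingredient being a transversality (nondegeneracy) check for the linearized constraint, which again reduces to invertibility of the Lam\'e operator.

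The main obstacle is the isomorphism claim for $D\mathcal{A}(0)$ in the $L^p$ setting with mixed boundary data: verifying the complementing condition on the traction portion of the boundary and invoking the Agmon--Douglis--Nirenberg estimates is where the boundary smoothness hypotheses are consumed, and coupling this with Korn's inequality to exclude the kernel is the delicate point; the extra coupling from the incompressibility constraint and the nondegeneracy of its multiplier is the secondary difficulty. Everything else---smoothness of $\mathcal{A}$ and the passage from local inversion to the quantitative bound---is routine once the Banach-algebra bookkeeping furnished by Lemma (\ref{w1p_algebra}) is in place.
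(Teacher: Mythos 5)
Your proposal is correct and takes essentially the same route as the paper's proof: the paper proves nothing itself and simply defers to \cite{Ciarlet_book} (with \cite{Grandmont_3Dcouple} as the source of the statement), where the result is obtained by exactly the implicit-function-theorem argument you outline --- smoothness of $\mathbf{u}\mapsto\bigl(-\mathrm{div}\,\mathbf{T}(\mathbf{u}),\ \mathbf{T}(\mathbf{u})\mathbf{n}\bigr)$ via the $W^{1,p}$ Banach-algebra property for $p>3$, invertibility of the linearized Lam\'e operator in the Agmon--Douglis--Nirenberg $L^p$ framework combined with Korn's inequality, local inversion at the natural state yielding the bound (\ref{estimate_u}), and the volume constraint absorbed through a scalar pressure multiplier as in Grandmont. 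No gap to report.
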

\begin{proof}
See \cite{Ciarlet_book}. \qed
\end{proof}
\begin{remark}
It is noticed that $\mathbf{u} \in C^{1,1-3/p}(\overline{\Omega}_{mf})$ because of the continuous embedding of
$W^{2,p}(\Omega_{mf})$ in $C^{1,1-3/p}(\overline{\Omega}_{mf})$ for $p > 3$.
\end{remark}

The displacement field $\mathbf{u}(x)$ solved from Eq.(\ref{Eq_elasticity}) naturally defines a mapping $\Phi(\mathbf{u}) =
\mathcal{I} + \mathbf{u}$ in $\overline{\Omega}_{mf}$ where $\mathcal{I}$ is the identity mapping. This mapping
$\Phi(\mathbf{u})(x)$ has to be appropriately extended into $\mathbb{R}^3 \setminus \overline{\Omega}_{mf}$ to yield a global transformation for the Poisson-Boltzmann equation. It is critical in what follows that this extension has various
favorable properties, which leads us to define a global mapping by harmonic extension:
\begin{eqnarray}
\Phi(\mathbf{u}) = \begin{cases}
                   \mathcal{I} + \mathbf{u} &  \mathbf{x} \in \overline{\Omega}_{mf} \\
                   \mathcal{I} + \mathbf{w} &  \mbox{otherwise} \\
                   \end{cases}  \label{Phi_def}
\end{eqnarray}
where $\mathbf{w}$ solves
\begin{equation} \label{w_def}
\begin{split}
\Delta \mathbf{w}  = &~ 0  ~\mbox{in}~ \mathbb{R}^3 \setminus \overline{\Omega}_{mf} , \\
\mathbf{w} = & ~\mathbf{u} ~  \mbox{on} ~\Gamma_{f}. \\
\end{split}
\end{equation}
The following crucial lemma concerns the regularity of $\Phi(\mathbf{u})$ and the invertibility of $\nabla \Phi(\mathbf{u})$:
\begin{lemma} \label{piola}
Let $\Phi(\mathbf{u})$ be defined in Eq.(\ref{Phi_def}), we have
\begin{itemize}
\item[(a)] $\Phi(\mathbf{u}) \in W^{2,p}(\Omega_{mf})$ and $\Phi(\mathbf{u}) \in
C^{\infty}(\mathbb{R}^3 \setminus \overline{\Omega}_{mf})$.
\item[(b)] There exists a constant $M>0$ such that for all $\| \mathbf{u} \|_{W^{2,p}(\Omega)} \le M$,
$\nabla \Phi(\mathbf{u})$ is an invertible matrix in $W^{1,p}(\Omega_{mf})$ and in
$C^{\infty}(\mathbb{R}^3 \setminus \overline{\Omega}_{mf})$.
\item[(c)] Under condition (b), $\Phi(x)$ is one-to-one on $\mathbb{R}^3$, and is a $C^1$-diffeomorphism from
$\Omega_{mf}$ to $\Phi(\mathbf{u})(\Omega_{mf})$, and is a $C^{\infty}$-diffeomorphism
from $\mathbb{R}^3 \setminus \overline{\Omega}_{mf}$ to $\Phi(\mathbf{u})(\mathbb{R}^3 \setminus \overline{\Omega}_{mf})$.
\end{itemize}
\end{lemma}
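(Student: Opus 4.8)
The plan is to treat the three assertions in order, exploiting the supercritical Sobolev embedding $W^{2,p}(\Omega_{mf}) \hookrightarrow C^{1,1-3/p}(\overline{\Omega}_{mf})$ (valid since $p>3$) together with the interior smoothness of harmonic functions. For part (a), the interior claim is immediate: on $\overline{\Omega}_{mf}$ we have $\Phi(\mathbf{u}) = \mathcal{I} + \mathbf{u}$, and $\mathbf{u} \in W^{2,p}(\Omega_{mf})$ by Theorem~\ref{thm_3Delasticity}, while $\mathcal{I}$ is smooth; hence $\Phi(\mathbf{u}) \in W^{2,p}(\Omega_{mf})$. For the exterior claim I would first record that the exterior Dirichlet problem \eqref{w_def} is solvable, with boundary data $\mathbf{u}|_{\Gamma_f} \in W^{2-1/p,p}(\Gamma_f)$ obtained from the trace theorem, and then invoke the fact that any harmonic function is real-analytic, hence $C^{\infty}$, on the open set where it satisfies Laplace's equation. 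This gives $\mathbf{w} \in C^{\infty}(\mathbb{R}^3 \setminus \overline{\Omega}_{mf})$ and therefore $\Phi(\mathbf{u}) = \mathcal{I} + \mathbf{w} \in C^{\infty}(\mathbb{R}^3 \setminus \overline{\Omega}_{mf})$.

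For part (b), the key quantitative input is that both $\|\nabla \mathbf{u}\|_{L^{\infty}(\Omega_{mf})}$ and $\|\nabla \mathbf{w}\|_{L^{\infty}(\mathbb{R}^3 \setminus \overline{\Omega}_{mf})}$ are controlled by $\|\mathbf{u}\|_{W^{2,p}(\Omega_{mf})}$. Inside, this follows from the embedding above; outside, it follows from elliptic estimates for \eqref{w_def} (bounding $\mathbf{w}$ by its boundary data) followed by Sobolev embedding, so that $\|\nabla \mathbf{w}\|_{L^{\infty}} \le C\|\mathbf{u}|_{\Gamma_f}\|_{W^{2-1/p,p}(\Gamma_f)} \le C\|\mathbf{u}\|_{W^{2,p}(\Omega_{mf})}$. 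Choosing $M$ so that $CM < 1$ forces $\|\nabla\Phi(\mathbf{u}) - \mathrm{I}\|_{L^{\infty}} < 1$ on both regions; a Neumann series then shows $\nabla\Phi(\mathbf{u}) = \mathrm{I} + \nabla\mathbf{u}$ (resp. $\mathrm{I} + \nabla\mathbf{w}$) is pointwise invertible with $\det \nabla\Phi(\mathbf{u})$ bounded away from $0$. To land the inverse in $W^{1,p}(\Omega_{mf})$ I would write $(\nabla\Phi)^{-1}$ via Cramer's rule: its cofactor entries are polynomials in the entries of $\nabla\mathbf{u} \in W^{1,p}$, hence lie in $W^{1,p}$ by the Banach algebra property (Lemma~\ref{w1p_algebra}), while $1/\det\nabla\Phi \in W^{1,p}$ because $\det\nabla\Phi$ is continuous, bounded away from zero, and in $W^{1,p}$, so that $\nabla(1/\det\nabla\Phi) = -\nabla(\det\nabla\Phi)/(\det\nabla\Phi)^2 \in L^p$. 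The smoothness of the exterior inverse is automatic from part (a).

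For part (c), local invertibility from (b) plus the inverse function theorem makes $\Phi$ a local $C^1$-diffeomorphism on $\Omega_{mf}$ and a local $C^{\infty}$-diffeomorphism on the exterior; the remaining task is global injectivity on all of $\mathbb{R}^3$. I would assemble the global perturbation field $\mathbf{v}$ that equals $\mathbf{u}$ on $\overline{\Omega}_{mf}$ and $\mathbf{w}$ outside; because $\mathbf{w} = \mathbf{u}$ on $\Gamma_f$, this field is continuous across the interface and, being piecewise $C^1$ with no jump, lies in $W^{1,\infty}(\mathbb{R}^3)$ with $\|\nabla\mathbf{v}\|_{L^{\infty}} \le CM < 1$, so $\mathbf{v}$ is globally Lipschitz with some constant $L < 1$. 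Then for $x \ne y$, $|\Phi(x) - \Phi(y)| \ge |x-y| - |\mathbf{v}(x) - \mathbf{v}(y)| \ge (1-L)|x-y| > 0$, which gives injectivity; combined with the everywhere-invertible derivative this upgrades the local diffeomorphisms to global $C^1$- and $C^{\infty}$-diffeomorphisms onto the respective (open) images.

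The main obstacle I anticipate is the exterior analysis: establishing solvability and the correct decay for the harmonic extension on the unbounded domain $\mathbb{R}^3 \setminus \overline{\Omega}_{mf}$, and extracting the uniform bound $\|\nabla\mathbf{w}\|_{L^{\infty}} \le C\|\mathbf{u}\|_{W^{2,p}(\Omega_{mf})}$ that feeds the smallness threshold for $M$. Everything on $\Omega_{mf}$ is a direct consequence of the supercritical embedding, Cramer's rule, and Lemma~\ref{w1p_algebra}; the delicacy is confined to controlling $\mathbf{w}$ at and near infinity and to verifying that the continuity of $\mathbf{v}$ across $\Gamma_f$ genuinely yields a global Lipschitz bound with no spurious surface contribution to $\nabla\mathbf{v}$.
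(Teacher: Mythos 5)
Your proposal is correct and follows essentially the same route as the paper: the supercritical embedding $W^{2,p}\hookrightarrow C^{1,1-3/p}$ to bound $\nabla\mathbf{u}$ inside, the gradient estimate for the harmonic extension $\mathbf{w}$ outside, a smallness threshold on $M$, and injectivity of a small Lipschitz perturbation of the identity. The only difference is that you inline the arguments (Neumann series, Cramer's rule with the Banach algebra property of Lemma~\ref{w1p_algebra}, and the $|\Phi(x)-\Phi(y)|\ge(1-L)|x-y|$ bound) that the paper delegates to the cited results of Grandmont and Ciarlet, which makes your write-up self-contained but not a different proof.
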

\begin{proof}
That $\Phi(\mathbf{u}) \in W^{2,p}(\Omega_{mf})$ follows directly from
its definition.  Also, of $\Phi(\mathbf{u}) \in C^{\infty}(\mathbb{R}^3
\setminus \overline{\Omega}_{mf})$ since $\Phi(\mathbf{u}) =
\mathcal{I} + \mathbf{w}$ while $\mathbf{w}$ is harmonic hence
analytical in $\Phi(\mathbf{u}) \in C^{\infty}(\mathbb{R}^3
\setminus \overline{\Omega}_{mf})$ because it is the solution of the
Laplace equation (\ref{w_def}). For the invertibility of
$\Phi(\mathbf{u})$ in $W^{1,p}(\Omega_{mf})$ we refer to Lemma 2 in
\cite{Grandmont_3Dcouple} or Theorem 5.5.1 in \cite{Ciarlet_book},
which says that if a $\mathbf{u} \in \Omega_{mf} $ is differentiable
and
$$ | \nabla \mathbf{u}(x) | < C $$
for some constant depending on $\Omega_{mf}$, then $\nabla \Phi(\mathbf{u}) = \mathrm{I} + \nabla \mathbf{u} > 0~ \forall~
x \in \overline{\Omega}_{mf}$ and $\mathrm{I} + \nabla \mathbf{u}$ is injective on $\Omega_{mf}$.
The invertibility of $\nabla \Phi(\mathbf{u})$
therefore follows from the facts that $\mathbf{u} \in C^{1,1-3/p}(\overline{\Omega}_{mf})$ such that
for sufficiently small $M$
$$|\nabla \mathbf{u}| \le \| \mathbf{u} \|_{C^{1,1-3/p}(\Omega_{mf})} \le C_1 \| \mathbf{u} \|_{W^{2,p}(\Omega_{mf})}
= C_1 M \le C.$$
To prove the invertibility of $\nabla \Phi(\mathbf{u}) = \mathrm{I} + \nabla \mathbf{w}$ in
$\mathbb{R}^3 \setminus \overline{\Omega}_{mf} $ we
notice the following estimate for the first derivative of the solution to Laplace equation \cite{GT_pdebook}:
$$ | \nabla \mathbf{w} | \le \| \mathbf{w} \|_{C^{1}(\mathbb{R}^3 \setminus \overline{\Omega}_{mf})} \le C_2 \| \mathbf{u} \|_{C^{1,1-3/p}(\Gamma_f)} \le C_2 M \le C,$$
Therefore if $M$ is chosen such that
\begin{eqnarray}
 M \le \frac{C}{\max \{ C1,C2 \} } \label{M_bound_1}
\end{eqnarray}
$\nabla \Phi(\mathbf{u})$ is an invertible matrix in $\mathbb{R}^3$. \qed
\end{proof}
\begin{remark}
It follows from Lemma (\ref{piola}) that the matrix $\mathbf{F}(\mathbf{u})$ in Eq.(\ref{Fu_def})
is well-defined, symmetric and positive definite. More precisely,
we have that the maps
$\mathbf{F}(\mathbf{u})(x) \in C^{0,1-3/p}(\overline{\Omega}_{mf})$ and $\mathbf{F}(\mathbf{u})(x) \in
C^{\infty}(\mathbb{R}^3 \setminus \overline{\Omega}_{mf})$. On the other hand, as a mapping from $\mathbf{u} \in W^{2,p}(\Omega_{mf})$ to
$\mathbf{F}(\mathbf{u}) \in C^{\infty}(\mathbb{R}^3 \setminus \overline{\Omega}_{mf})$,
$\mathbf{F}(\mathbf{u})$ is infinitely differentiable with respect to $\mathbf{u}$. In all what follows
we will write $\mathbf{F}(\mathbf{u})$ and $J(\mathbf{u})$ as $\mathbf{F}$ and $J$ only,
keeping in mind that they are $\mathbf{u}$ dependent.
\end{remark}

\section{Preliminary Results for the Poisson-Boltzmann Equation} \label{PBEsolution}

\subsection{The Poisson-Boltzmann equation with Piola transformation}
The rigorous analysis and numerical approximation of solutions to the
Poisson-Boltzmann equation (\ref{PBE_1}) or its transformed
version (\ref{PBE_2}) are generally subject to three major difficulties:
1) the singular charge distribution,
2) the discontinuous dielectric constant on the molecular surface and
3) the strong exponential nonlinearities.
However, it was recently demonstrated \cite{Chen_rpbe} that as far as the
untransformed Poisson-Boltzmann equation (\ref{PBE_1}) is concerned,
some of these difficulties can be side-stepped by individually considering
the singular and the regular components of the solution.
Specifically, the potential solution is decomposed to be
\begin{eqnarray}
\phi = G + \phi^r = G + \phi^l + \phi^n \label{G_phir_decomp}
\end{eqnarray}
where the singular component
$$ G = \sum_i \frac{q_i}{\epsilon_m |x - x_i|}$$
is the solution of the Poisson equation
\begin{eqnarray}
-\nabla \cdot (\epsilon_m \nabla G) = \rho_f := \sum_i^{N} q_i \delta(x_i) \quad \mbox{in} \quad \mathbb{R}^3; \label{G_utransd}
\end{eqnarray}
while $\phi^l$ is the linear component of the electrostatic potential which satisfies
\begin{equation} \label{pl_utransd}
\begin{split}
-\nabla \cdot (\epsilon \nabla \phi^l) = & ~
-\nabla \cdot ((\epsilon - \epsilon_m) \nabla G) \quad \mbox{in} \quad \Omega,  \\
\phi^l = & ~ g - G \quad \mbox{on} \quad  \partial \Omega,
\end{split}
\end{equation}
and the nonlinear component $\phi^n$ solves
\begin{equation} \label{pn_utransd}
\begin{split}
-\nabla \cdot (\epsilon \nabla \phi^n) + \kappa^2 \sinh(\phi^n + \phi^l + G)  = &~ 0  \quad \mbox{in} \quad \Omega,  \\
\phi^n  = & ~0 \quad \mbox{on} \quad \partial \Omega,
\end{split}
\end{equation}
where
\begin{eqnarray}
 g = \sum_{i=1}^{N} q_i \frac{e^{-\kappa |x - x_i|}}{\epsilon_s |x - x_i|} \label{g_def}
\end{eqnarray}
is the boundary condition of the complete Poisson-Boltzmann equation
(\ref{PBE_1}). Such a decomposition scheme removes the point charge
singularity from the original Poisson-Boltzmann and it was shown in
\cite{Chen_rpbe} that the regular component of the electrostatic
potential $\phi^r = \phi^l + \phi^n$ belongs to $H^1(\Omega)$ although
the entire solution $G + \phi^r$ does not. The most prominent
advantage of this decomposition lies in the fact that the regular
component represents the reaction potential field of the system,
which can be directly used to compute the solvation energy and other
associated important properties of the system. It is not necessary
to solve the Poisson-Boltzmann equation twice, once with uniform
vacuum dielectric constant and vanishing ionic strength and the
other with real physical conditions, to obtain the reaction field
\cite{HXZhou_96}. As to be shown later on, the identification of
this regular potential component as the reaction field also
facilitates the analysis and the computation of the electrostatic
forces.

Applying the similar decomposition to the transformed
Poisson-Boltzmann equation we get an equation for the singular
component $G$:
\begin{eqnarray}
-\nabla \cdot (\epsilon_m \mathbf{F} \nabla G) = J \rho_f \quad \mbox{in} \quad \mathbb{R}^3, \label{G_transd}
\end{eqnarray}
and an equation for the regular component $\phi^r$:
\begin{equation} \label{pr_transd}
\begin{split}
-\nabla \cdot (\epsilon \mathbf{F} \nabla \phi^r) + J \kappa^2 \sinh(\phi^r + G)  = & ~
\nabla \cdot ( (\epsilon - \epsilon_m) \mathbf{F} \nabla G)  \quad \mbox{in} \quad \Omega,  \\
\phi^r  = &~ g - G \quad \mbox{on} \quad  \partial \Omega.
\end{split}
\end{equation}
We shall prove the existence of $\phi^r$ in Eq.(\ref{pr_transd}) and
give its $L^{\infty}$ bounds by individually considering the
equation for the linear component $\phi^l$:
\begin{equation}
\begin{split} \label{pl_transd}
-\nabla \cdot (\epsilon \mathbf{F} \nabla \phi^l) = & ~
\nabla \cdot ( (\epsilon - \epsilon_m) \mathbf{F} \nabla G)  \quad \mbox{in} \quad \Omega,  \\
\phi^l  = &~ g - G \quad \mbox{on} \quad  \partial \Omega,
\end{split}
\end{equation}
and the equation for the nonlinear component $\phi^n$:
\begin{equation} \label{pn_transd}
\begin{split}
-\nabla \cdot (\epsilon \mathbf{F} \nabla \phi^n) + J \kappa^2 \sinh(\phi^n + \phi^l + G)  =
& ~0  \quad \mbox{in} \quad \Omega,  \\
\phi^n  = &~ 0 \quad \mbox{on} \quad \partial \Omega.
\end{split}
\end{equation}
As mentioned above, the functions $G,\phi^l,\phi^n,\rho^f$ and $\kappa$ in Eqs.(\ref{G_transd}) through (\ref{pn_transd})
shall be interpreted as the compositions of the corresponding entries of these functions in untransformed equations
(\ref{G_utransd}) through (\ref{pn_utransd}) with the Piola transformation $\Phi(x)$, i.e., $g=g(\Phi(x)),G=G(\Phi(x)),
\phi^l = \phi^l(\Phi(x)),\phi^n=\phi^n(\Phi(x)),\rho^f=\rho^f(\Phi(x)), \kappa=\kappa(\Phi(x))$.

\subsection{Regularity and estimates for the singular solution component $G$}
We first study the Eq. (\ref{G_transd}) for the singular component of electrostatic potential. We remark that
the linear and nonlinear PB equations have the same singular component of the electrostatic potential. The solution
of this singular component is the Green's function for the elliptic operator $L$ defined by
\begin{eqnarray}
 L u = -\nabla \cdot (\epsilon_m \mathbf{F} \nabla u). \label{green_operator}
\end{eqnarray}
We shall use the following theorem \cite{Gruter_greefunc} concerning the regularity and the estimate of the Green's function:
\begin{theorem} \label{green_estimate}
Let $\Omega$ be an open set in $\mathbb{R}^3$. Suppose the elliptic operator
$$ L u = \sum_{i,j=1}^{n} \frac{\partial}{\partial x_j}(a_{ij} \frac{\partial u}{\partial x_i})$$
is uniformly elliptic and bounded, while the coefficients $a_{ij}$ satisfying
$$ | a_{ij}(x) - a_{ij}(y)| \le \omega(|x - y|) $$
for any $x,y \in \Omega $, and the non-decreasing function $\omega(x)$
satisfies
\begin{eqnarray*}
\omega(2t) & \le & K \omega(t)~~ \mbox{for some}~ K > 0~\mbox{and all}~t>0, \\
\int_{\mathbb{R}} \frac{\omega(t)}{t}  dt & <  &\infty.
\end{eqnarray*}
Then for the corresponding
Green's function $G$ the following six inequalities are true for any $x,y \in \Omega$:
\begin{itemize}
\item[(a)] $G(x,y) \le K |x - y |^{-1}$,
\item[(b)] $G(x,y) \le K \delta(x) |x - y |^{-2}$.
\item[(b)] $G(x,y) \le K \delta(x) \delta(y)
  |x - y |^{-3}$.
\item[(d)] $|\nabla_{x} G(x,y)| \le K  |x - y |^{-2}$.
\item[(e)] $|\nabla_{y} G(x,y)| \le K \delta(y)
|x - y |^{-3}$.
\item[(f)] $|\nabla_{x} \nabla_{y} G(x,y)| \le K
|x - y |^{-3}$.
\end{itemize}
where $\delta(y) = \mathrm{dist}(y,\partial \Omega)$ and the general constant $K=K(a_{ij},\omega,\Omega)$.
\end{theorem}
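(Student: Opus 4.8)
The plan is to follow the classical programme of Grüter and Widman \cite{Gruter_greefunc}. Existence of the Green's function $G(\cdot,y)$ for the divergence-form operator $L$ with the homogeneous Dirichlet condition $G(\cdot,y)=0$ on $\partial\Omega$ is obtained in the standard way, by approximating the Dini-continuous coefficients $a_{ij}$ by smooth ones and passing to the limit using the uniform estimate (a). Throughout write $d=|x-y|$ and $\delta(x)=\dist(x,\partial\Omega)$, and exploit that the adjoint operator $L^{*}$ has coefficients obeying the same hypotheses, together with the symmetry $G(x,y)=G^{*}(y,x)$. The strategy is: first a global weak-type decay estimate for the distribution function of $G(\cdot,y)$; then the pointwise bounds via local/boundary maximum principles of De Giorgi--Nash--Moser type; and finally the gradient bounds, where the condition $\int_{0}\frac{\omega(t)}{t}\,dt<\infty$ is decisive.

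For the bound $G(x,y)\le K|x-y|^{-1}$ I would fix $y$ and insert the truncations $v=\min(G(\cdot,y),t)$ as test functions in the weak form of $LG(\cdot,y)=\delta_{y}$. Since $v\equiv t$ near $y$, the pairing gives $\int_{\{G<t\}}A\nabla G\cdot\nabla G=t$, so uniform ellipticity yields $\int_{\{G<t\}}|\nabla G|^{2}\le\lambda^{-1}t$. Applying the Sobolev inequality to $v\in W^{1,2}_{0}(\Omega)$ and using $\min(G,t)\equiv t$ on $\{G>t\}$ gives $t^{6}\,|\{G>t\}|\le C(\lambda^{-1}t)^{3}$, i.e. the weak-$L^{3}$ estimate $|\{x:G(x,y)>t\}|\le K t^{-3}$. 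Because $y\notin B_{d/2}(x)$, the function $G(\cdot,y)$ is a nonnegative weak solution of $Lu=0$ on $B_{d/2}(x)$; combining the De Giorgi--Nash--Moser local maximum principle with the weak-$L^{3}$ bound then gives $G(x,y)\le\sup_{B_{d/4}(x)}G(\cdot,y)\le Kd^{-1}$.

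For the one- and two-$\delta$ boundary bounds I would use that $G(\cdot,y)$ vanishes on the smooth boundary $\partial\Omega$. The boundary maximum principle (linear decay of nonnegative solutions vanishing on a smooth boundary) improves the interior bound to $G(x,y)\le K\frac{\delta(x)}{d}\sup_{B_{d/2}}G\le K\delta(x)d^{-2}$, which is the first boundary estimate. Applying the identical boundary decay in the $y$ variable—legitimate because $G(x,\cdot)=G^{*}(\cdot,x)$ solves the adjoint equation and also vanishes on $\partial\Omega$—multiplies in a factor $\delta(y)/d$ and produces the two-$\delta$ bound $G(x,y)\le K\delta(x)\delta(y)d^{-3}$.

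For the gradient estimates the hypotheses on $\omega$ enter: the Dini condition $\int_{0}\frac{\omega(t)}{t}\,dt<\infty$ is exactly the threshold guaranteeing that $L$-harmonic functions are $C^{1}$ with the scale-invariant interior estimate $\sup_{B_{r/2}(x)}|\nabla u|\le\frac{K}{r}\sup_{B_{r}(x)}|u|$. Applying this to $u=G(\cdot,y)$ on $B_{d/2}(x)$ and invoking (a) gives $|\nabla_{x}G|\le Kd^{-2}$. Differentiating the equation in $y$ shows $\nabla_{y}G(\cdot,y)$ is again $L$-harmonic in $x$ away from $y$, so the same interior estimate applied to it, together with the $y$-gradient analogue of (a), yields the mixed bound $|\nabla_{x}\nabla_{y}G|\le Kd^{-3}$; the boundary gradient version applied in $y$, combined with the decay already established, gives $|\nabla_{y}G|\le K\delta(y)d^{-3}$. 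The main obstacle is twofold: the analytic difficulty is establishing the $C^{1}$ gradient bounds under merely Dini-continuous coefficients, where Schauder theory does not apply directly and one must instead run a Campanato-type perturbation iteration whose convergence is governed precisely by $\int_{0}\frac{\omega(t)}{t}\,dt<\infty$ (cf. \cite{GT_pdebook}); the bookkeeping difficulty is the simultaneous tracking of the boundary factors $\delta(x),\delta(y)$, which relies on transferring boundary estimates between the two variables through the symmetry $G(x,y)=G^{*}(y,x)$.
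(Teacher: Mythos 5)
You should first know that the paper contains no proof of this theorem at all: it is quoted (typos and duplicated item label included) from Gr\"uter and Widman \cite{Gruter_greefunc}, so your proposal can only be compared against that source. In outline you reconstruct its programme faithfully: the truncations $\min(G(\cdot,y),t)$ as test functions giving $\int_{\{G<t\}}A\nabla G\cdot\nabla G=t$ and hence the weak-$L^{3}$ distribution bound; De Giorgi--Nash--Moser local boundedness on $B_{d/2}(x)$ for (a); boundary-decay arguments transferred between the two variables through $G(x,y)=G^{*}(y,x)$ for the $\delta$-weighted bounds; and the Dini condition $\int_{0}\omega(t)t^{-1}\,dt<\infty$ as exactly the hypothesis yielding scale-invariant interior $C^{1}$ estimates, hence (d) and (f). Two routine caveats: the truncation identity must be run on pole-regularized approximations (in $\mathbb{R}^{3}$ the Green function is not globally in $H^{1}$, only its truncations are), and the $\delta$-weighted and gradient bounds require boundary regularity beyond ``open set'' --- you correctly assume a smooth boundary, consistent with the paper's application.

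The genuine gap is item (e). The inequality as printed --- gradient and boundary distance in the \emph{same} variable, $|\nabla_{y}G(x,y)|\le K\delta(y)|x-y|^{-3}$ --- is false, and your one-line derivation of it does not go through. Counterexample: $L=\Delta$ on the half-space $\{y_{3}>0\}$, where $G(x,y)=\frac{1}{4\pi}\left(|x-y|^{-1}-|x-y^{*}|^{-1}\right)$ with $y^{*}$ the reflected pole; as $\delta(y)\to 0$ the quantity $|\nabla_{y}G(x,y)|$ tends to the Poisson kernel magnitude $\frac{1}{2\pi}\,\delta(x)\,|x-y|^{-3}\neq 0$, while the claimed right-hand side tends to $0$. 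Your own chain exposes the defect: the boundary $C^{1}$ estimate at $y$ gives $|\nabla_{y}G(x,y)|\le \frac{C}{d}\sup_{\Omega\cap B_{d/2}(y)}G(x,\cdot)$, and the $y$-version of (b) bounds that supremum only by $K(\delta(y)+d)\,d^{-2}$, since $\delta(z)$ can be as large as $\delta(y)+d/2$ on the ball; this leaves an irreducible $Kd^{-2}$ term. The correct Gr\"uter--Widman estimate pairs the gradient with the distance in the \emph{opposite} variable, $|\nabla_{y}G(x,y)|\le K\delta(x)|x-y|^{-3}$ (equivalently $|\nabla_{x}G(x,y)|\le K\delta(y)|x-y|^{-3}$), and that your machinery does prove: bound $\sup_{z\in B_{d/2}(y)}G(x,z)\le K\delta(x)d^{-2}$ using (b), then apply the interior or boundary $C^{1}$ estimate in $y$. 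So you should prove the corrected inequality and flag the transcription error; note that your argument for (c) works precisely because there the boundary-decay factor $\delta(y)/d$ multiplies a supremum already carrying $\delta(x)$ from the other variable, which is the structural point that (e) as printed violates.
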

From this theorem we can derive the regularity of the Green's
function of the operator (\ref{green_operator}). Indeed, by Sobolev
embedding $\epsilon_m \mathbf{F} \in C^{0,1-3/p}(\mathbb{R}^3)$,
therefore it satisfies the conditions on $a_{ij}$ in this theorem
provided that $\omega(t) = Kt^{3/p}$. We then conclude that the
singular component of the electrostatic potential $G \in
W^{1,\infty}(\Omega \setminus B_r(x_i))$. On the other hand, from
Eq. (\ref{G_transd}) we know that $G(\Phi(\mathbf{u})(x))/J(x_i)$
itself is the Green's function of operator (\ref{green_operator}) if
$\mathbf{F}$ is generated by the Piola transformation according to
(\ref{Fu_def}) and $J$ is the corresponding Jacobian. Thus the
Green's function of differential operator (\ref{green_operator})
belongs to $W^{2,p}(\Omega \setminus B_r(x_i))$ since it is the
composition of the Green's function of Laplace operator, which is of
$C^{\infty}(\Omega \setminus B_r(x_i))$, and the Piola
transformation, which is of $W^{2,p}(\Omega)$. Higher regularity of
$G$ in $\Omega_s$ can be derived thanks to the harmonic extension of
$\mathbf{u}$ to $\mathbb{R}^3 \setminus \overline{\Omega}_{mf}$. In
particular, because all charges are located in $\Omega_{mf}$ and
$\Omega_{mr}$ the Poisson equation (\ref{G_transd}) appears a
Laplace equation
\begin{eqnarray*}
\nabla (\epsilon \mathbf{F} \nabla G ) = 0 \quad \mbox{in} \quad \Omega_s, \label{G_in_s}
\end{eqnarray*}
hence $G(x) \in C^{\infty}(\Omega_s)$, since $\Omega_s$ is a smooth open domain and
$\mathbf{F} \in C^{\infty}(\Omega_s)$.

In addition to the regularity of the Green's function, we have following estimates of $G$ with respect to $\mathbf{F}$ and $J$.
\begin{lemma} \label{lemma_green}
For any given molecule the Green's function $G$ of operator (\ref{green_operator}) has estimates
\begin{itemize}
\item[(a)] $\| G \|_{L^{\infty}(\overline{\Omega}_s)}  \le  C \|J\|_{L^{\infty}(\Omega)} $.
\item[(b)] $\| \nabla G \|_{L^{\infty}(\overline{\Omega}_s)}  \le  C \|J \|_{L^{\infty}(\Omega)}$.
\end{itemize}
If in addtion $\|\mathbf{F} -\mathrm{I} \|_{W^{1,p}(\Omega)} \le
C_f, \|J - 1 \|_{W^{1,p}(\Omega)} \le C_J$ for some constant $C_f$
and $C_J$, then
\begin{itemize} \label{g_G_estimate}
\item[(c)] $\| G \|_{L^{p}(\partial \Omega)}   \le  C \| G \|_{L^{\infty}(\overline{\Omega}_s)} $.
\item[(d)] $\| g \circ \Phi \|_{W^{2-1/p,p}(\partial \Omega)} \le
C_g  \|g \|_{W^{2,p}(\Omega_s)}$.
\item[(e)] $\| g \circ \Phi - G \|_{W^{2-1/p,p}(\partial \Omega)}  \le C_g  \|g \|_{W^{2,p}(\Omega_s)} + C_G \| G \|_{L^{\infty}(\Omega_s)}   $.
\item[(f)] $\| \mathbf{F} \nabla G \|_{W^{1-1/p,p}(\Gamma)}  \le C_{\Gamma} \| G\|_{L^{\infty}(\Omega'_s)}$ for some set $\Omega'_s$.
\end{itemize}
\end{lemma}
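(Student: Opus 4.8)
The plan is to derive the two unconditional bounds (a) and (b) straight from the pointwise Green's function estimates of Theorem (\ref{green_estimate}), and then to obtain (c)--(f) by combining an elementary inclusion, the trace theorem, the regularity of the Piola map on the solvent region, and interior elliptic regularity for $G$. Throughout I would use that on the admissible ball of displacements the matrix $\mathbf{F}$ is uniformly elliptic and lies in $C^{0,1-3/p}$ (the remark following Lemma (\ref{piola})), so that Theorem (\ref{green_estimate}) applies with a constant $K$ uniform for the given molecule. For (a) and (b), write the singular potential as the finite sum $G=\sum_i J(x_i)q_i\,\widetilde{G}(\cdot,x_i)$, where $\widetilde{G}$ is the two-point Green's function of the operator $L$ in (\ref{green_operator}); this is legitimate since $J\rho_f=\sum_i J(x_i)q_i\delta(x_i)$. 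Because every $x_i$ lies in $\Omega_{mf}\cup\Omega_{mr}$ while the field point ranges over $\overline{\Omega}_s$, the separation $d_0=\mathrm{dist}(\{x_i\},\overline{\Omega}_s)$ is a fixed positive number, so parts (a) and (d) of Theorem (\ref{green_estimate}) give $\widetilde{G}(x,x_i)\le Kd_0^{-1}$ and $|\nabla_x\widetilde{G}(x,x_i)|\le Kd_0^{-2}$ uniformly on $\overline{\Omega}_s$; summing the finitely many contributions and bounding $|J(x_i)|\le\|J\|_{L^\infty(\Omega)}$ yields (a) and (b).

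Estimate (c) is immediate: assumption (\ref{distance}) places the molecules at positive distance from $\partial\Omega$, so $\partial\Omega\subset\overline{\Omega}_s$ and $\|G\|_{L^p(\partial\Omega)}\le|\partial\Omega|^{1/p}\|G\|_{L^\infty(\partial\Omega)}\le|\partial\Omega|^{1/p}\|G\|_{L^\infty(\overline{\Omega}_s)}$. For (d) I would use that $\Omega_s\subset\mathbb{R}^3\setminus\overline{\Omega}_{mf}$, where by Lemma (\ref{piola}) the map $\Phi$ is a $C^\infty$-diffeomorphism; under the smallness hypotheses $\|\mathbf{F}-\mathrm{I}\|_{W^{1,p}(\Omega)}\le C_f$ and $\|J-1\|_{W^{1,p}(\Omega)}\le C_J$ its $C^2$-norm near $\partial\Omega$, and the discrepancy between $\Phi(\Omega_s)$ and $\Omega_s$, are controlled by $C_f,C_J$. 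A change of variables then bounds $\|g\circ\Phi\|_{W^{2,p}(\Omega_s)}$ by $C_g\|g\|_{W^{2,p}(\Phi(\Omega_s))}$, and since all charges are interior to the molecules $g$ is smooth on the solvent, so the trace theorem $W^{2,p}(\Omega_s)\hookrightarrow W^{2-1/p,p}(\partial\Omega)$ closes (d).

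For (e) I would split by the triangle inequality, bounding $\|g\circ\Phi\|_{W^{2-1/p,p}(\partial\Omega)}$ by (d) and then $\|G\|_{W^{2-1/p,p}(\partial\Omega)}$ by $C_G\|G\|_{L^\infty(\Omega_s)}$: near $\partial\Omega$ the potential $G$ is $L$-harmonic with smooth coefficients (no charges there), so interior $L^p$-estimates on balls centred on $\partial\Omega$ control $\|G\|_{W^{2,p}}$ on a collar by $\|G\|_{L^p}$, hence by $\|G\|_{L^\infty(\Omega_s)}$, and the trace theorem supplies $C_G$. For (f) I would take $\Omega'_s$ to be a one-sided solvent collar of the interface $\Gamma$, on which $G$ again solves the homogeneous equation $\nabla\cdot(\epsilon\mathbf{F}\nabla G)=0$ with $C^\infty$ coefficients; interior regularity gives $\|G\|_{W^{2,p}(\Omega'_s)}\le C\|G\|_{L^\infty(\Omega'_s)}$, so $\nabla G\in W^{1,p}(\Omega'_s)$, and the Banach-algebra property of Lemma (\ref{w1p_algebra}) yields $\|\mathbf{F}\nabla G\|_{W^{1,p}(\Omega'_s)}\le C\|\mathbf{F}\|_{W^{1,p}}\|\nabla G\|_{W^{1,p}(\Omega'_s)}$. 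The trace theorem $W^{1,p}(\Omega'_s)\hookrightarrow W^{1-1/p,p}(\Gamma)$ then produces (f) with $C_\Gamma$ depending on $C_f$ and the collar.

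The hard part is not any single estimate but the bookkeeping of constants: every constant in (c)--(f) must be shown to depend only on $p$, the fixed geometry, and the bounds $C_f,C_J$, so that these inequalities stay uniform over the whole ball of admissible displacements on which the fixed-point map $S$ will later be defined. The most delicate point is the composition with $\Phi$ in the fractional-order trace norms of (d)--(e): although $\Phi$ is $C^\infty$ away from $\overline{\Omega}_{mf}$, its regularity is only $W^{2,p}$ up to $\Gamma_f$, so I must keep the composition and change-of-variables estimates on the solvent side—where the extra smoothness of the harmonic extension is available—and quantify the $C^2$-closeness of $\Phi$ to the identity in terms of $\|\mathbf{u}\|_{W^{2,p}}$, and hence of $C_f,C_J$, via Lemma (\ref{piola}).
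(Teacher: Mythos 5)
Your proposal is correct and follows essentially the same route as the paper: parts (a)--(b) from the pointwise Green's function bounds of Theorem (\ref{green_estimate}) summed over the finitely many charges with $|J(x_i)|\le\|J\|_{L^{\infty}(\Omega)}$, (c) from the inclusion $\partial\Omega\subset\overline{\Omega}_s$, (d) from a Sobolev composition/change-of-variables bound for $g\circ\Phi$ on the solvent side plus the trace theorem, and (e)--(f) from interior $L^p$ elliptic regularity for the $L$-harmonic $G$ on an enlarged solvent set $\Omega'_s$ combined with the Banach algebra property of Lemma (\ref{w1p_algebra}) and the trace theorem. Your minor variations (localizing with balls near $\partial\Omega$ instead of the paper's single enlarged domain $\Omega'_s$, and invoking the diffeomorphism property of $\Phi$ rather than the cited composite-function estimate) are cosmetic, and your emphasis on uniformity of the constants over the displacement ball matches the paper's intent.
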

\begin{proof}
This $\| J \|_{L^{\infty}(\overline{\Omega}_s)}$ is well defined
since $J$ is uniformly continuous in $\overline{\Omega}_s$. To
prove (a) and (b) we define $q_{max} = \mbox{max}\{ |q_i| \}$ and
$$ \displaystyle{
\| \nabla_{x} G_i(x ,x_i) \|_{L^{\infty}(\overline{\Omega}_s)}  =
\frac{K}{\delta^2}, ~~ \|  G_i(x ,x_i)
\|_{L^{\infty}(\overline{\Omega}_s)}  = \frac{K}{\delta}}$$ where
$\delta$ is the smallest distance between $x \in \partial \Omega$
and singular charges at $x_i$. This smallest distance is related to
the radii of atoms used in defining the molecular surface. In the
sense of Connolly's molecular surface, $\delta$ is simply the
smallest van der Waals radius of the atoms which have contact
surface \cite{Connolly_MS}. We can therefore bound $G$ and its
gradient with
\begin{eqnarray}
\| G \|_{L^{\infty}(\overline{\Omega}_s)} & = &
 \| \sum_i  J q_i G_i \|_{L^{\infty}(\overline{\Omega}_s)} \le  N q_{max} \| J\|_{L^{\infty}(\overline{\Omega}_s)}
\| G_i \|_{L^{\infty}(\overline{\Omega}_s)} \nonumber \\
& =  & \frac{\|J\|_{L^{\infty}(\Omega)} N K q_{max}}{\delta}, \label{green_estim} \\
\| \nabla G \|_{L^{\infty}(\overline{\Omega}_s)}  & = & \| \sum_i J q_i
\nabla G_i \|_{L^{\infty}(\overline{\Omega}_s)} \le  N q_{max} \| J
\|_{L^{\infty}(\overline{\Omega}_s)} \| \nabla_x  G_i
\|_{L^{\infty}(\overline{\Omega}_s)} \nonumber \\
& = &  \frac{\|J
\|_{L^{\infty}(\Omega)} N K q_{max}}{\delta^2},
\label{green_grad_estim}
\end{eqnarray}
where $N$ is the total number of singular charges and $\| J \|_{L^{\infty}(\overline{\Omega}_s)}$ is
the maximum Jacobian on $\Gamma$.

The statement $(c)$ holds because $\partial \Omega$ is also a piece
of boundary of $\Omega_s$ as shown in Fig.(\ref{fig_domaindef}). To
verify the statement $(d)$, we noted that $g \circ \Phi $ is the
composition of $g$ in Eq.(\ref{g_def}), which is smooth in
$\Omega_s$, and the mapping $\Phi(x) \in W^{2,p}(\Omega_s)$, i.e.,
$$ g \circ \Phi = \sum_{i} q_i \frac{e^{-\kappa |\Phi(x) - \Phi(x_i)|}}{\epsilon_s | \Phi(x) - \Phi(x_i) |}. $$
Following the estimate of the composite function in Sobolev space \cite{Moseenkov_sobolevcompos},
we have the inequality
\begin{eqnarray}
\| g \circ \Phi \|_{W^{2-1/p,p}(\partial \Omega)} & \le & \| g \|_{W^{2,p}(\Omega_s)} \nonumber \\
& \le & C(1+\| \Phi \|_{L^{\infty}(\Omega_s)}) (1 + \| \Phi \|_{W^{2,p}(\Omega_s)}) \| g \|_{W^{2,p}(\Omega_s)}
\nonumber \\
& := & C_g \| g \|_{W^{2,p}(\Omega_s)} \label{g_est}
\end{eqnarray}
with a constant $C_g$ depending upon $\Phi(x)$. Here we choose to bound $\| g \circ \Phi \|_{W^{2-1/p,p}(\partial \Omega)}$
by $\| g \circ \Phi \|_{W^{2,p}(\Omega_s)}$ instead of $\| g \circ \Phi \|_{W^{2,p}(\Omega)}$ since the latter is not well
defined due to the singular nature of $g$.

The validity of inequalities $(e)$ and $(f)$ follows from the estimate of $\| G \|_{W^{2,p}(\Omega_s^{'})}$.
This $\Omega_s^{'}$ is chosen such that $\Omega_s \subset \subset \Omega_s^{'}$. For example, we can choose
$\Omega_s^{'}$ to be the union of $\Omega_s$, $\Gamma$, $\partial \Omega$, the domain
\begin{eqnarray*}
 \Omega^-_s= \{ x | x \in \Omega_{mf}, \mathrm{dist}(x,\Gamma) < \frac{\delta}{2} \},
\end{eqnarray*}
and the domain
\begin{eqnarray*}
 \Omega^+_s= \{ x | x \notin \Omega, \mathrm{dist}(x,\partial \Omega) < \frac{\delta}{2} \}.
\end{eqnarray*}
Applying the $L^p$ estimate to Eq.(\ref{G_transd}) in $\Omega_s$ we obtain
\begin{eqnarray}
\| G \|_{W^{2-1/p,p}(\partial \Omega)} & \le & C \| G \|_{W^{2,p}(\Omega_s)} \le C(\mathbf{F}) \| G \|_{L^p(\Omega_s^{'})}
\le C(\mathbf{F}) \| G \|_{L^{\infty}(\Omega_s^{'})} \nonumber \\
& := & C_G \| G \|_{L^{\infty}(\Omega_s^{'})} , \label{trace_est}
\end{eqnarray}
where the second inequality is a consequence of the $L^p$ estimate of the solution
to $-\nabla \cdot ( \epsilon \mathbf{F} \nabla G ) = 0$ in $\Omega'_s$. The coefficient $C_G=C(\mathbf{F})$
depends on the ellipticity constants of $\mathbf{F}$ and its moduli of continuity on $\Omega_s$, hence
is bounded as long as $\mathbf{F}$ is bounded. By combining Eqs.(\ref{trace_est}) and (\ref{g_est}) we
get (c). For the last estimate we notice
\begin{eqnarray}
\|\mathbf{F} \nabla G \|_{W^{1-1/p,p}(\Gamma)} & \le &
C \| [\epsilon] \mathbf{F} \nabla G \|_{W^{1,p}(\Omega_s)}  \nonumber \\
& \le & C  \| \mathbf{F} \|_{W^{1,p}(\Omega_s)} \| \nabla G \|_{W^{1,p}(\Omega_s)}, ~~~ (p > 3) \nonumber \\
& \le & C  \| \mathbf{F} \|_{W^{1,p}(\Omega_s)}  \| G \|_{W^{2,p}(\Omega_s)} \nonumber \\
& \le & C(\mathbf{F})  \| \mathbf{F} \|_{W^{1,p}(\Omega_s)}  \| G \|_{L^p(\Omega'_s)} \nonumber \\
& \le & C  \| \mathbf{F} \|_{W^{1,p}(\Omega_s)}  \| G\|_{L^{\infty}(\Omega'_s)}  \nonumber \\
& := & C_{\Gamma} \| G\|_{L^{\infty}(\Omega'_s)} . \qed \label{fG_estimate}
\end{eqnarray}
\end{proof}
\begin{remark}
$\| G \|_{L^{\infty}(\Omega'_s)}$ can also be estimated by Eq.(\ref{green_estim}) if $\delta$ is replaced by
$\delta/2$ and $\| J\|_{L^{\infty}(\Omega)}$ is replaced by $\| J\|_{L^{\infty}(\Omega \cup \Omega^+)}$.
\end{remark}

\subsection{Regularity and estimates for the regular linearized solution component $\phi^r$}
We consider an elliptic interface problem modified from the Poisson-Boltzmann equation
\begin{equation} \label{reg_LPBeq}
\begin{split}
-\nabla \cdot (\epsilon \mathbf{F} \nabla \phi^r ) + J \kappa^2 (\phi^r + G)  = & ~
\nabla \cdot ((\epsilon - \epsilon_m) \mathbf{F} \nabla G) + f ~\mbox{in}~ \Omega \\
\left [ \phi^r \right ] = \phi^r_s - \phi^r_m  = & ~ 0, ~\mbox{on}~ \Gamma  \\
\phi^r  = & ~g - G ~\mbox{on}~ \partial \Omega,
\end{split}
\end{equation}
where $[\epsilon] = \epsilon_s - \epsilon_m$ is the jump of
dielectric constant and $f \in L^{p}(\Omega)$ is a given function.
The equation for the regular potential solution of the linear
Poisson-Boltzmann equation is a special case of (\ref{reg_LPBeq})
with $f=0$. We remark that the regular component of the linear
Poisson-Boltzmann equation in the absence of the Piola
transformation represents a typical elliptic equation with
discontinuous coefficients, for which Theorem (\ref{w2p_theorem})
can be directly applied to get the existence and the estimate. In
fact, the potential solution in this case is smooth in every
subdomain (Proposition 1.4, \cite{Yanyan_CompositeMaterial}). When
the Piola transformation is incorporated, the coefficients of the
Eq. (\ref{reg_LPBeq}) are not smooth and we have to rebuild
the regularity and the estimate of the regular potential solution
$\phi^r$, as summarized in the following theorem
\begin{theorem} \label{reg_LPBeq_theorem}
There exists a unique solution $\phi^r$ of (\ref{reg_LPBeq}) in $H^1(\Omega)$. Moreover, there exists a positive constant $C_f$ such that if $\| \mathbf{F} - \mathrm{I} \|_{W^{1,p}(\Omega)} \le C_f$ then $\phi^r$ belongs to
$\mathcal{W}^{2,p}(\Omega)$ and the following estimate holds true
\begin{eqnarray}
\| \phi^r \|_{\mathcal{W}^{2,p}(\Omega)}  & \le &  C_2 \left ( \| G \|_{L^p(\Omega_s)} + \|f \|_{L^p(\Omega)} +
\| g-G \|_{W^{2-1/p,p}(\partial \Omega)} +  \nonumber \right . \\
 & & \qquad \left . \|\mathbf{F} \nabla G \|_{W^{1-1/p,p}(\Gamma)} \right ).
\label{pr_lpbe_estimate}
\end{eqnarray}
\end{theorem}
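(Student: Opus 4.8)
The plan is to treat \eqref{reg_LPBeq} as a perturbation of the constant-coefficient interface problem already controlled by Theorem \ref{w2p_theorem}. The existence of a unique weak solution $\phi^r \in H^1(\Omega)$ I would obtain first by a standard Lax--Milgram argument: after homogenizing the boundary condition (subtracting a lift of $g-G \in W^{2-1/p,p}(\partial\Omega)$, which embeds into $H^{1/2}(\partial\Omega)$), the bilinear form associated with $-\nabla\cdot(\epsilon\mathbf{F}\nabla\,\cdot\,)+J\kappa^2(\,\cdot\,)$ is bounded and coercive on $H^1$. Coercivity uses that $\mathbf{F}$ is symmetric positive definite (from the Remark following Lemma \ref{piola}), that $\epsilon$ is bounded below by a positive constant, and that $J\kappa^2\geq 0$; the right-hand side $\nabla\cdot((\epsilon-\epsilon_m)\mathbf{F}\nabla G)+f$ defines a bounded linear functional because $\mathbf{F}\nabla G\in W^{1-1/p,p}(\Gamma)$ by Lemma \ref{lemma_green}(f) and $f\in L^p(\Omega)\hookrightarrow H^{-1}$. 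Note the linearized term $J\kappa^2(\phi^r+G)$ replaces the $\sinh$, so the form is genuinely linear and coercivity is immediate — no monotonicity machinery is needed here.

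For the $\mathcal{W}^{2,p}$ regularity I would set up a fixed-point (or Neumann-series) argument that peels the Piola perturbation off the principal part. Writing $\epsilon\mathbf{F}=\epsilon\mathrm{I}+\epsilon(\mathbf{F}-\mathrm{I})$, I would rearrange \eqref{reg_LPBeq} into the reference interface problem
\begin{eqnarray*}
-\nabla\cdot(\epsilon\nabla\phi^r) &=& \nabla\cdot\bigl(\epsilon(\mathbf{F}-\mathrm{I})\nabla\phi^r\bigr)
+\nabla\cdot((\epsilon-\epsilon_m)\mathbf{F}\nabla G)+f-J\kappa^2(\phi^r+G)
\end{eqnarray*}
with the same interface and Dirichlet data, and apply Theorem \ref{w2p_theorem} to the operator $A=-\nabla\cdot(\epsilon\nabla\,\cdot\,)$, whose piecewise-constant coefficients satisfy its hypotheses. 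This yields
\begin{eqnarray*}
\|\phi^r\|_{\mathcal{W}^{2,p}} &\le& K\Bigl(\|\nabla\cdot(\epsilon(\mathbf{F}-\mathrm{I})\nabla\phi^r)\|_{L^p}
+\|\mathbf{F}\nabla G\|_{W^{1-1/p,p}(\Gamma)}+\|f\|_{L^p}\\
& & {}+\|g-G\|_{W^{2-1/p,p}(\partial\Omega)}+\|\phi^r\|_{L^p}\Bigr).
\end{eqnarray*}
The crucial term is the first one on the right: using the Banach-algebra estimate (Lemma \ref{w1p_algebra} with $p=q$) together with $\mathbf{F}-\mathrm{I}\in W^{1,p}$, one bounds $\|\nabla\cdot(\epsilon(\mathbf{F}-\mathrm{I})\nabla\phi^r)\|_{L^p}\le C\|\mathbf{F}-\mathrm{I}\|_{W^{1,p}(\Omega)}\,\|\phi^r\|_{\mathcal{W}^{2,p}}$. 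Choosing $C_f$ small enough that $KC\,C_f\le\tfrac12$ lets this term be absorbed into the left-hand side, leaving the asserted estimate \eqref{pr_lpbe_estimate} after also controlling the lower-order $J\kappa^2\phi^r$ term (which costs only a $\|\phi^r\|_{L^p}$, itself absorbable or dominated by the $H^1$ bound via interpolation).

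The main obstacle I anticipate is technical bookkeeping at the interface and the justification that the divergence-form right-hand side is admissible for Theorem \ref{w2p_theorem}: that theorem is stated for $Au=f$ with $f\in L^p$ and a flux-jump datum $h\in W^{1-1/p,p}(\Gamma)$, whereas my rearranged equation carries the term $\nabla\cdot((\epsilon-\epsilon_m)\mathbf{F}\nabla G)$ in divergence form. I would handle this by recognizing that, since $G$ solves \eqref{G_transd} (so $-\nabla\cdot(\epsilon_m\mathbf{F}\nabla G)=0$ in $\Omega_s$ and $G$ is smooth there), this divergence term contributes \emph{not} a bulk $L^p$ source in each subdomain but rather the interface flux jump $[\epsilon]\mathbf{F}\nabla G\cdot\mathbf{n}$ across $\Gamma$ — precisely the datum $h=\mathbf{F}\nabla G\cdot\mathbf{n}$ estimated in Lemma \ref{lemma_green}(f). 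Making this reduction rigorous (integrating by parts subdomain-by-subdomain and identifying the distributional source with an interface term) is where the care lies; once it is done, the smallness-of-$C_f$ absorption closes the estimate and the uniqueness in $\mathcal{W}^{2,p}$ follows from the uniqueness already established in $H^1$.
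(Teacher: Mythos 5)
Your proposal is correct and follows essentially the paper's own route: Lax--Milgram gives the unique $H^1$ solution, and the $\mathcal{W}^{2,p}$ regularity comes from applying Theorem \ref{w2p_theorem} to the piecewise-constant interface operator with the Piola perturbation $\nabla\cdot(\epsilon(\mathbf{F}-\mathrm{I})\nabla\phi^r)$ moved to the right-hand side (with the singular source correctly read as the interface flux datum $[\epsilon]\mathbf{F}\nabla G\cdot\mathbf{n}$), bounded via Lemma \ref{w1p_algebra} and closed by smallness of $\|\mathbf{F}-\mathrm{I}\|_{W^{1,p}(\Omega)}$. The only presentational difference is that the paper realizes your ``absorption/Neumann-series'' step as an explicit iteration $\phi^r_N$ shown to contract in $\mathcal{W}^{2,p}$ when $CC_f=k<1$, which is precisely the rigorous form of your fixed-point variant and avoids assuming $\|\phi^r\|_{\mathcal{W}^{2,p}}<\infty$ a priori before absorbing.
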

Before proving this $\mathcal{W}^{2,p}$ estimate, we first establish a lemma concerning the $L^{\infty}$
estimate of a linear elliptic interface problem.
\begin{lemma} \label{linfty_lemma}
Let $\phi^r$ solve
\begin{eqnarray*}
-\nabla \cdot (\epsilon \nabla \phi^r ) + b \phi^r  & = & f ~\mbox{in}~ \Omega  \\
\left [ \phi^r \right ]  & = &  0, ~\mbox{on}~ \Gamma  \\
\left [ \epsilon \phi^r_n \right ]  & = &  0, ~\mbox{on}~ \Gamma  \\
\phi^r   & = & g  ~\mbox{on}~ \partial \Omega,
\end{eqnarray*}
where $\epsilon$ is a piecewise constant as defined for problem (\ref{reg_LPBeq}) and $b>0$ is a given real number,
$f(x) \in L^{p}(\Omega), g \in H^1(\Omega), p >3$. Then
\begin{eqnarray}
\| \phi^r \|_{L^{\infty}} \le  C \left( \| f \|_{L^{p}(\Omega)} + \| g \|_{H^{1/2}(\partial \Omega)} \right) \label{linfty_estimate}
\end{eqnarray}
\end{lemma}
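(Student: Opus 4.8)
The plan is to exploit linearity and split the problem into a part driven by the interior datum $f$ (with homogeneous boundary values) and a part driven by the boundary datum $g$ (with vanishing right-hand side). Writing $\phi^r = u_1 + u_2$, I let $u_2$ solve $-\nabla\cdot(\epsilon\nabla u_2) + b\,u_2 = f$ in $\Omega$ with $u_2 = 0$ on $\partial\Omega$, and let $u_1$ solve the homogeneous equation $-\nabla\cdot(\epsilon\nabla u_1) + b\,u_1 = 0$ with $u_1 = g$ on $\partial\Omega$; both carry the transmission conditions $[u_i]=0$, $[\epsilon\,\partial_n u_i]=0$ on $\Gamma$. The structural observation I would lean on is that these two interface conditions are exactly the natural (weak) conditions for the divergence-form operator $-\nabla\cdot(\epsilon\nabla\,\cdot\,)$: testing against $v\in H^1_0(\Omega)$ produces the single global identity $\int_\Omega \epsilon\nabla\phi^r\cdot\nabla v + \int_\Omega b\,\phi^r v = \int_\Omega f v$, so the jump of $\epsilon$ across $\Gamma$ never has to be tracked by hand — it is absorbed into ``bounded, measurable, uniformly elliptic coefficients,'' which is all the $L^\infty$ machinery below requires.

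For $u_2$ I would run Stampacchia's truncation method (equivalently, the De Giorgi--Nash--Moser theory, or Theorem~8.16 of Gilbarg--Trudinger). Since $\epsilon$ is bounded and uniformly elliptic and $b>0$ only strengthens coercivity, testing the weak form against the truncations $(u_2-k)^+$ and estimating the super-level sets $\{u_2>k\}$ yields, for $f\in L^p(\Omega)$ with $p>3>n/2=3/2$, the bound $\|u_2\|_{L^\infty(\Omega)}\le C\|f\|_{L^p(\Omega)}$; the symmetric argument with $-(u_2+k)^-$ controls $\inf u_2$. None of this is sensitive to the interface. As a cross-check, the pointwise Green's-function bounds of Theorem~\ref{green_estimate} — which persist with the same local singularity once the coercive term $b>0$ is included, by comparison with the $b=0$ kernel — give the same estimate directly, via $|u_2(x)|\le \int_\Omega G(x,y)|f(y)|\,dy\le K\,\||x-\cdot|^{-1}\|_{L^{p'}(\Omega)}\|f\|_{L^p(\Omega)}$, which is finite and uniform in $x$ precisely when $p'<3$, i.e.\ $p>3/2$.

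For $u_1$ the natural tool is the weak maximum principle: the coercive operator $-\nabla\cdot(\epsilon\nabla\,\cdot\,)+b$ with $b>0$ admits no interior maximum exceeding its boundary values (and this survives discontinuous $\epsilon$, being proved by testing with $(u_1-\ell)^+$), so $\|u_1\|_{L^\infty(\Omega)}\le\|g\|_{L^\infty(\partial\Omega)}$. This is where I expect the \emph{main obstacle}. The conclusion is stated in terms of $\|g\|_{H^{1/2}(\partial\Omega)}$, but $\partial\Omega$ is a two-dimensional surface, and on it $H^{1/2}$ lies below the Sobolev embedding threshold $H^{s}\hookrightarrow L^\infty$ (which needs $s>1$); hence $\|g\|_{L^\infty(\partial\Omega)}$ cannot be dominated by $\|g\|_{H^{1/2}(\partial\Omega)}$ in general. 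Equivalently, subtracting the given $H^1(\Omega)$ representative of $g$ turns the boundary datum into divergence-form forcing $\nabla\cdot(\epsilon\nabla g)$ with $\epsilon\nabla g$ only in $L^2$, whereas the $L^\infty$ theory needs this field in $L^q$ with $q>n=3$, i.e.\ $g\in W^{1,q}(\Omega)$ with $q>3$, which is strictly stronger than $g\in H^1$.

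I would therefore close the gap in one of two ways. The clean fix is to strengthen the hypothesis to $g\in W^{1,q}(\Omega)$, $q>3$, so that the trace lies in $W^{1-1/q,q}(\partial\Omega)\hookrightarrow C^0(\partial\Omega)$; then both the maximum-principle route and the divergence-data route give $\|u_1\|_{L^\infty}\le C\|g\|_{W^{1-1/q,q}(\partial\Omega)}$, and the lemma holds with $H^{1/2}$ replaced by this boundary norm. The alternative — and this is what is actually available in the application — is to invoke the distance assumption (\ref{distance}), which keeps every singular charge a fixed positive distance from $\partial\Omega$, so that the boundary datum $g-G$ is smooth and uniformly bounded on $\partial\Omega$; its $L^\infty(\partial\Omega)$ norm is then controlled by the data, and the maximum-principle bound for $u_1$ combines with the Stampacchia bound for $u_2$. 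Assembling $\phi^r=u_1+u_2$ and adding the two estimates then yields (\ref{linfty_estimate}).
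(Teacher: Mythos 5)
Your proposal is sound in substance, but it takes a genuinely different route from the paper, and along the way it exposes a real defect that the paper's own proof does not address. The paper splits by operator rather than by datum: it writes $\phi^r = \phi^l + \phi^n$, where $\phi^l$ solves the pure interface problem $-\nabla\cdot(\epsilon\nabla\phi^l) = f$ with $\phi^l = g$ on $\partial\Omega$ and \emph{no} zeroth-order term, and $\phi^n$ solves $-\nabla\cdot(\epsilon\nabla\phi^n) + b(\phi^n+\phi^l) = 0$ with zero boundary values. The bound for $\phi^l$ is not proved but cited as ``well known'' from \cite{Babuska_70,ChenZM_98}, in exactly the form $\| \phi^l \|_{L^{\infty}} \le C ( \| f\|_{L^p(\Omega)} + \| g\|_{H^{1/2}(\partial \Omega)} )$; the piece $\phi^n$ is then pinned between $\pm\|\phi^l\|_{L^\infty}$ by testing with the truncations $\phi_t = \max(\phi^n-\alpha,0)$ and $\min(\phi^n+\alpha,0)$, $\alpha = \|\phi^l\|_{L^\infty}$, and invoking Poincar\'e --- the same truncation/maximum-principle mechanism you deploy for $u_1$, and the template reused later for the nonlinear bound in Theorem (\ref{reg_NLPBeq_theorem}). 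Your split by datum ($u_2$ carrying $f$ with zero trace, $u_1$ carrying the trace $g$ with zero source, both under the full coercive operator) is equally legitimate; your explicit Stampacchia/De Giorgi argument for $u_2$, valid for $p>3/2$ and insensitive to the jump of $\epsilon$, replaces the paper's citation with an actual proof, which is a gain in self-containedness.

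Your obstruction for $u_1$ is well taken, and the paper's proof simply inherits it through the citation: since $\partial\Omega$ is a two-dimensional surface, $H^{1/2}(\partial\Omega)$ contains essentially unbounded functions, and a function in $H^1(\Omega)\cap L^\infty(\Omega)$ has an essentially bounded trace, so a solution whose trace is unbounded cannot be bounded --- no constant makes (\ref{linfty_estimate}) true with only $\|g\|_{H^{1/2}(\partial\Omega)}$ on the right. The ``well known'' inequality imported for $\phi^l$ carries the same flaw as stated, since the cited references provide $H^1$/$H^2$ theory and $L^\infty$ control for bounded boundary data, not an $L^\infty$-by-$H^{1/2}$ bound. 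Both of your repairs are appropriate, and the second is the one the paper implicitly relies on: in every application of this lemma the boundary datum is $g - G$, with all singular charges kept a fixed positive distance (\ref{distance}) from $\partial\Omega$, so the datum is smooth and uniformly bounded there, and the estimate holds with $\|g\|_{L^\infty(\partial\Omega)}$ (or a $W^{2-1/p,p}(\partial\Omega)$ trace norm, as in Lemma (\ref{lemma_green})) in place of the $H^{1/2}$ norm. With that reading, assembling your bounds for $u_1$ and $u_2$ delivers the lemma in the form in which it is actually used.
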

\begin{proof}
The existence of unique solution $\phi^r \in \mathcal{W}^{2,p}
\subset H^1(\Omega)$ can be directly deduced from Theorem
(\ref{w2p_theorem}). We follow \cite{Chen_rpbe} and let $\phi^r =
\phi^l + \phi^n$ where $\phi^l$ solves
\begin{eqnarray*}
-\nabla \cdot (\epsilon \nabla \phi^l )  & = & f ~\mbox{in}~ \Omega  \\
\left [ \phi^l \right ]  & = &  0 ~\mbox{on}~ \Gamma,  \\
\left [ \epsilon \phi^l_n \right ]  & = &  0 ~\mbox{on}~ \Gamma,  \\
\phi^l   & = & g  ~\mbox{on}~ \partial \Omega,
\end{eqnarray*}
and $\phi^n$ solves
\begin{eqnarray*}
-\nabla \cdot (\epsilon \nabla \phi^n ) + b (\phi^n + \phi^l) & = & 0 ~\mbox{in}~ \Omega,  \\
\left [ \phi^n \right ]  & = &  0 ~\mbox{on}~ \Gamma,  \\
\left [ \epsilon \phi^n_n \right ]  & = & 0 ~\mbox{on}~ \Gamma,  \\
\phi^n   & = & 0  ~\mbox{on}~ \partial \Omega.
\end{eqnarray*}
It is well known \cite{Babuska_70,ChenZM_98} that
\begin{eqnarray*}
\| \phi^l \|_{L^{\infty}} \le C \left( \| f\|_{L^p(\Omega)} + \| g\|_{H^{1/2}(\partial \Omega)} \right ),
\end{eqnarray*}
while for $\phi^n$ we claim that $ - \| \phi^l \|_{L^{\infty}} \le
\| \phi^n \|_{L^{\infty}} \le \| \phi^l \|_{L^{\infty}}$. To prove
this assertion we define $\phi_t = \max(\phi^n - \alpha,0)$ where
$\alpha = \| \phi^l \|_{L^{\infty}}$. Then the trace $\trace(\phi_t) =0$ hence
$\phi_t \in H^1_0(\Omega)$ by definition. Consider the weak
formulation of the problem for $\phi^n$ with test function $\phi_t$
\begin{eqnarray*}
(\epsilon \nabla \phi^n, \nabla \phi_t) + b(\phi^n+\phi^l,\phi_t) =
0.
\end{eqnarray*}
Since $\phi_t \ge 0$ wherever $\phi^n \ge \alpha$, we have
\begin{eqnarray*}
b(\phi^n+\phi^l,\phi_t) = \int_{\phi^n \ge \alpha} b(\phi^n+\phi^l)
\phi_t dx + \int_{\phi^n < \alpha} b(\phi^n+\phi^l) \phi_t dx \ge 0,
\end{eqnarray*}
and
\begin{eqnarray*}
0 \ge (\epsilon \nabla \phi^n, \nabla \phi_t) = (\epsilon \nabla (\phi^n-\alpha), \nabla \phi_t) =
(\epsilon \nabla \phi_t, \nabla \phi_t) \ge 0.
\end{eqnarray*}
Thus $\nabla \phi_t =0$, and $\phi_t =0$ or $\phi^n \le \alpha$ in $\Omega$ follows from the
Poincare inequality. By defining $\phi_t = \min(\phi^n + \alpha,0)$ and following the
same procedure we can verify that $\phi^n \ge -\alpha$. The lemma shall be finally proved
by combining the estimates of $\phi^l$ and $\phi^n$.
\qed
\end{proof}

\begin{proof}{\bf of Theorem (\ref{reg_LPBeq_theorem}).}
Consider the general weak formulation of the elliptic equation in problem (\ref{reg_LPBeq}), i.e., find
$\phi^r = u \in H^1_0(\Omega)$ such that $A(u,v) = F(v), \forall v \in H^1_0(\Omega)$ where
\begin{eqnarray*}
A(u,v) &=& \int_{\Omega} ( \mathbf{F} \nabla u \nabla v + J \kappa^2 u v) dx, \\
F(v) &=& \int_{\Omega} \left ( \nabla \cdot ((\epsilon - \epsilon_m) \mathbf{F} \nabla G) - J \kappa^2 G + f \right )  dx - A(g-G,v).
\end{eqnarray*}
We shall apply the Lax-Milgram theorem to obtain the existence and
the uniqueness of a weak solution $\phi^r \in H^1(\Omega)$ to
(\ref{reg_LPBeq}). Hence we must show that $F(\cdot)$ is bounded, and
$A(\cdot,\cdot)$ is bounded and coercive with the assumptions on the
coefficient matrix $\mathbf{F}$ and the Jacobian $J$. Consider the
bilinear form $A(\cdot,\cdot)$. The Piola transform matrix
$\mathbf{F}$ is positive definite, hence $\mathbf{F} \nabla v \cdot
\nabla v \ge \gamma | \nabla v |^2$ for some $\gamma >0$. This inequality 
and the positiveness of Jacobian $J$ give
\begin{eqnarray}
A(v,v) & = & \int_{\Omega} (\mathbf{F} \nabla v \cdot \nabla v + J \kappa^2 v^2 ) dx \ge
\int_{\Omega} (\gamma | \nabla v|^2 + J \kappa^2 v^2 )dx  \ge \lambda |u|^2_{H^1(\Omega)} \nonumber \\
& = & \gamma \left(  \frac{1}{2} |v|^2_{H^1(\Omega)} + \frac{1}{2} |v|^2_{H^1(\Omega)} \right) \ge
 \gamma \left (  \frac{1}{2 \theta^2} \| v \|^2_{L^2(\Omega)} + \frac{1}{2} |v|^2_{H^1(\Omega)} \right ) \nonumber \\
& \ge & m \left (  \| v \|^2_{L^2(\Omega)} + |v |^2_{H^1(\Omega)} \right ) = m \| v\|^2_{H^1(\Omega)},
\end{eqnarray}
where in the second inequality we applied the Poincare inequality
with constant $\theta$. Thus we verified that $A(\cdot,\cdot)$ is
coercive, with coercivity constant $m=\min\{ \gamma/(2 \theta^2),
\gamma/2 \}$.

On the other hand,
\begin{eqnarray}
|A(u,v)| & = & | \int_{\Omega} ( \mathbf{F} \nabla u \cdot \nabla v| + J \kappa^2 u v ) dx | \\
& \le & \sum_{i,j} \int_{\Omega} | \mathbf{F}_{ij} D_i u D_j v | dx  + \int_{\Omega} |J \kappa^2 uv  | dx \nonumber \\
& \le & \sum_{i,j} \| \mathbf{F}_{ij} \|_{L^{\infty}(\Omega)}  \| D_i u D_j v \|_{L^1(\Omega)} +
\kappa^2 \| J \|_{L^{\infty}} \| uv \|_{L^1(\Omega)} \nonumber \\
& \le &  \sum_{i,j} \| \mathbf{F}_{ij} \|_{L^{\infty}} \| u\|_{H^1(\Omega)} \|v \|_{H^1(\Omega)} + \kappa^2 \| J\|_{L^{\infty}}
\| u \|_{L^2(\Omega)} \| v \|_{L^2{\Omega}}  \nonumber \\
& \le & K_1 \| u \|_{H^1(\Omega)} \| v\|_{H^1(\Omega)}
\end{eqnarray}
which proves that $A(\cdot,\cdot)$ is bounded with constant
$\displaystyle{ K_1 = \sum_{i,j} \| \mathbf{F}_{ij} \|_{L^{\infty}} +
\kappa^2 \| J \|_{L^{\infty}} }$. This constant $K_1$ is finite because $\mathbf{F},J$ belong to $W^{1,p}(\Omega)$ which is
compactly embedded in $C^0(\overline{\Omega})$ for $p >3$.

In order to apply the Lax-Milgram theorem it remains to show that $F(v)$ is bounded on $H^1_0(\Omega)$. We have
\begin{eqnarray*}
| F(v) | & \le & | \int_{\Omega} (\epsilon - \epsilon_m) \mathbf{F} \nabla G v dx | +
\int_{\Omega} | J \kappa^2 Gv  + fv | dx + | A(g-G,v) | \nonumber   \\
& = & | \int_{\Omega_m} (\epsilon_m - \epsilon_m) \mathbf{F} \nabla G v dx  +
   \int_{\Omega_s} (\epsilon_s - \epsilon_m) \mathbf{F} \nabla G v dx | + \\
 & & ~  \int_{\Omega} | J \kappa^2 G v + fv| dx + | A(g-G,v) | \nonumber   \\
& = & | \int_{\Omega_s} (\epsilon_s - \epsilon_m) \mathbf{F} \nabla G  v dx | +
\int_{\Omega} | J \kappa^2 G v + fv | dx + | A(g-G,v) | \nonumber  \\
& \le &  \int_{\Omega_s} | (\epsilon_s - \epsilon_m) \mathbf{F} \nabla G  v| dx  +
\int_{\Omega} | J \kappa^2 G v + fv | dx + | A(g-G,v) | \nonumber  \\
& \le & [\epsilon] \| \mathbf{F} \nabla G \|_{L^2(\Omega)} \| v \|_{L^2(\Omega)}
+ (\kappa^2 \| J G\|_{L^2(\Omega)}  + \|f \|_{L^2(\Omega)}) \|v \|_{L^2(\Omega)} + \\
& & K_1 \| g-G\|_{H^1(\Omega)} \| v\|_{H^1(\Omega)} \nonumber \\
& = & \left ( [\epsilon] \| \mathbf{F} \nabla G \|_{L^2(\Omega)}
+ \kappa^2 \| J G\|_{L^2(\Omega)}  + \|f \|_{L^2(\Omega)}
\right.
\\
& & ~~~~~~ \left.
 + K_1 \| g-G\|_{H^1(\Omega)}  \right ) \| v\|_{H^1(\Omega)}  \nonumber \\
& = & K_2 \| v\|_{H^1(\Omega)},
\end{eqnarray*}
hence $F(\cdot)$ is a bounded linear functional on $H^1_0(\Omega)$.

We now proceed to show the regularity result and the estimate of $\phi^r$ following the
similar iterative technique in \cite{Grandmont_3Dcouple}. For this purpose we introduce a sequence
$\{ \phi^r_N \}$ generated by
\begin{eqnarray}
-\nabla \cdot (\epsilon \nabla \phi^r_N ) + J \kappa^2 \phi^r_N  & = &
\nabla \cdot ((\epsilon - \epsilon_m) \mathbf{F} \nabla G) - J \kappa^2 G \\
& & ~~~~~~ +
\nabla \cdot (\epsilon (\mathbf{F}-\mathrm{I}) \nabla \phi^r_{N-1} ) ~\mbox{in}~ \Omega, \nonumber \\
\left [ \phi^r_N \right ] & = &  0 ~\mbox{on}~ \Gamma,  \label{reg_LPBeq_n}\\
\phi^r_N   & = & g - G ~\mbox{on}~ \partial \Omega, \nonumber
\end{eqnarray}
and prove that $\phi^r_N \in \mathcal{W}^{2,p}(\Omega)$ and $\phi^r_N$ converges to the unique solution $\phi^r$ of
(\ref{reg_LPBeq}) in $\mathcal{W}^{2,p}(\Omega)$ as $N \rightarrow \infty$. The first term $\phi^r_0$ of the sequence solves
\begin{eqnarray}
-\nabla \cdot (\epsilon \nabla \phi^r_0 ) + J \kappa^2 \phi^r_0  & = &
\nabla \cdot ((\epsilon - \epsilon_m) \mathbf{F} \nabla G) - J \kappa^2 G ~\mbox{in}~ \Omega \nonumber \\
\left [ \phi^r_0 \right ] & = &  0, ~\mbox{on}~ \Gamma  \label{reg_LPBeq_0}\\
\phi^r_0   & = & g - G ~\mbox{on}~ \partial \Omega, \nonumber
\end{eqnarray}
therefore it belongs to $\mathcal{W}^{2,p}(\Omega)$ according to Theorem (\ref{w2p_theorem}). Suppose now
that $\phi^r_{N-1} \in \mathcal{W}^{2,p}(\Omega)$, then
$\nabla \phi^r_{N-1} \in \mathcal{W}^{1,p}(\Omega)$ and
$\nabla \cdot (\epsilon (\mathbf{F} - \mathrm{I}) \nabla \phi^r_{N-1}) \in L^p(\Omega)$
following from Lemma (\ref{w1p_algebra}). Thus problem (\ref{reg_LPBeq_n}) also has a unique solution $\phi^r_N
\in \mathcal{W}^{2,p}(\Omega)$ for all integer $N$ according to Theorem (\ref{w2p_theorem}). To prove that
$\phi^r_N$ converges to the unique solution $\phi^r$ of problem (\ref{reg_LPBeq}), we estimate
$\| \phi^r_N - \phi^r_{N-1} \|_{\mathcal{W}^{2,p}(\Omega)}$ and show it is decreasing as $N \rightarrow \infty$.
By subtracting the equations in (\ref{reg_LPBeq}) for $N$ from those for $N-1$ we obtain a problem for
$\phi^r_N - \phi^r_{N-1}$. Applying Theorem (\ref{w2p_theorem}) again we know that this problem has a
unique solution in $\mathcal{W}^{2,p}(\Omega)$ which has an estimate
\begin{eqnarray}
\| \phi^r_N - \phi^r_{N-1} \|_{\mathcal{W}^{2,p}(\Omega)} & \le &
C \left( \| \nabla \cdot (\epsilon (\mathbf{F} - \mathrm{I}) \nabla (\phi^r_{N-1} - \phi^r_{N-2}) ) \|_{L^p(\Omega)}
\right.  \nonumber \\
& & ~~~~~ \left.
+ \| \phi^r_N - \phi^r_{N-1} \|_{L^p(\Omega)} \right) \nonumber \\
& \le & C \| \nabla \cdot (\epsilon (\mathbf{F} - \mathrm{I}) \nabla (\phi^r_{N-1} - \phi^r_{N-2}) ) \|_{L^p(\Omega)},
\nonumber \\
& \le & C \| \mathbf{F} - \mathrm{I} \|_{W^{1,p}(\Omega)}
\| \phi^r_{N-1} - \phi^r_{N-2} \|_{\mathcal{W}^{2,p}(\Omega)},
 \label{w2p_prndiff_1}
\end{eqnarray}
where in the second inequality we applied Lemma (\ref{linfty_lemma}) to the problem for $(\phi^r_N - \phi^r_{N-1})$,
and the generic constant $C$ is independent of $N, \mathbf{F}$. Therefore if the constant $C_f$ in the assumption of the theorem is chosen such that $C C_f = k < 1$ then $\| \phi^r_N - \phi^r_{N-1} \|_{\mathcal{W}^{2,p}(\Omega)}$
is decreasing with respect to $N$ hence the sequence $\phi^r_n$ converges to a unique
element $\overline{\phi^r}$ in $\mathcal{W}^{2,p}(\Omega)$. Letting $N \rightarrow \infty$ we can observe
that $\overline{\phi^r}$ is the unique solution of problem (\ref{reg_LPBeq}), meaning
$\overline{\phi^r} = \phi^r$.

The estimate of $\phi^r$ is obtained by estimating $\phi^r_N$ and passing $N$ to $\infty$. We notice
that $\phi^r_N = \phi^r_N - \phi^r_{N-1} + \phi^r_{N-1} - \phi^r_{N-2} + \cdots + \phi^r_0$, hence
\begin{eqnarray*}
\|\phi^r_N \|_{\mathcal{W}^{2,p}} & \le & \frac{1-k^{N-1}}{1-k} \| \phi^r_1 - \phi^r_0 \|_{\mathcal{W}^{2,p}}
+ \| \phi^r_0 \|_{\mathcal{W}^{2,p}} \nonumber \\
& \le & C_2 (\| G \|_{L^p(\Omega_s)} + \| g-G \|_{W^{2-1/p,p}(\Omega)} 
\\
& & ~~~~~ + \|\mathbf{F} \nabla G \|_{W^{1-1/p,p}(\Gamma)}~\mbox{as}~N \rightarrow \infty,
\end{eqnarray*}
where both Theorem (\ref{w2p_theorem}) and Lemma (\ref{linfty_lemma}) are applied to the problem of
$ \phi^r_1 - \phi^r_0$ and the problem of $\phi^r_0$ to get the desired bounds with respect to
the $\mathcal{W}^{2,p}$ and $L^{\infty}$ norms, and $C_2$ absorbs $k$ and
all the generic constants involved in these bounds.
\qed
\end{proof}

\subsection{Regularity and estimates for the regular nonlinear solution component $\phi^r$}

For the nonlinear Poisson-Boltzmann equation, the regular component $\phi^r$ of its potential solution solves
\begin{equation} \label{reg_NLPBeq}
\begin{split}
-\nabla \cdot (\epsilon \mathbf{F} \nabla \phi^r ) + J \kappa^2 \sinh(\phi^r + G)   = & ~
\nabla \cdot ((\epsilon - \epsilon_m) \mathbf{F} \nabla G) ~\mbox{in}~ \Omega  \\
\left [ \phi^r \right ] = \phi^r_s - \phi^r_m  = &~  0 ~\mbox{on}~ \Gamma  \\
\phi^r   = &~ g - G ~\mbox{on}~ \partial \Omega.
\end{split}
\end{equation}
The appearance of the nonlinear function $\sinh(x)$ complicates the
establishment of the existence of $\phi^r$. In particular, the
Lax-Milgram Theorem is not applicable to problem (\ref{reg_NLPBeq}).
Instead we define a energy functional based on the weak formulation
of (\ref{reg_NLPBeq}) and show that the unique minimizer of this
energy functional is the unique solution of (\ref{reg_NLPBeq}). On
the other hand, the establishment of the regularity and
$\mathcal{W}^{2,p}$ estimate of $\phi^r$ for (\ref{reg_NLPBeq}) is
simplified thanks to Theorem (\ref{reg_LPBeq_theorem}).

We start with the weak formulation of (\ref{reg_NLPBeq}):
$$ \mbox{Find}~\phi^r \in M \equiv \{ v \in H^1(\Omega) | e^v,e^{-v} \in L^2(\Omega),
\mbox{and}~v=g-G~ \mbox{on} ~ \partial \Omega \},$$
such that
\begin{eqnarray}
A(\phi^r,v) + (B(\phi^r),v) + \langle f_G,v \rangle =0, ~\forall v
\in H^1_o(\Omega), \label{pr_eq_weak}
\end{eqnarray}
where
\begin{eqnarray*}
A(\phi^r,v) & = & (\epsilon \mathbf{F} \nabla \phi^r, \nabla v), ~
(B(\phi^r),v) = (J \kappa^2 \sinh(\phi^r + G),v), \label{AB_def} \\
\langle f_G,v \rangle & = & \int_{\Omega} (\epsilon - \epsilon_m) \mathbf{F} \nabla G \cdot \nabla v. \label{fG_def}
\end{eqnarray*}
We also use $f_G$ to denote the function $[\epsilon] \mathbf{F}(\mathbf{u}) \nabla G \cdot \mathbf{n}$ on the
dielectric boundary $\Gamma$, since
\begin{eqnarray}
\langle f_G,v \rangle & = & ([\epsilon] \mathbf{F} \nabla G \cdot \mathbf{n}, v), \label{fG_def2}
\end{eqnarray}
where $[\epsilon] = \epsilon_s - \epsilon_m$ is the jump in $\epsilon$ on $\Gamma$.
Based on this weak formulation we define an energy on $M$:
\begin{eqnarray}
 E(w) = \int_{\Omega} \frac{\epsilon}{2} \mathbf{F} \nabla w \cdot \nabla w + J \kappa^2 \cosh(w + G)
+ \langle f_G,w \rangle. \label{eng_functional}
\end{eqnarray}
The weak solution of Eq.(\ref{pr_transd}) can be characterized as the minimizer of this energy functional. This
equivalence and the existence of this minimizer are due to the following four
simple lemmas. For the proof of these lemmas we refer to \cite{Chen_rpbe}.
\begin{lemma}
If $u$ is the solution of the optimization problem, i.e.,
$$ E(u) = \inf_{w \in M} E(w),$$
then $u$ is the solution of (\ref{pr_transd}).
\end{lemma}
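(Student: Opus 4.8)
The plan is to carry out the standard first-variation argument, exploiting the convexity of the energy $E$ defined in (\ref{eng_functional}). First I would record that $E$ is convex on the affine-convex admissible set $M$: the principal part $\int_\Omega \frac{\epsilon}{2}\mathbf{F}\nabla w \cdot \nabla w$ is a convex quadratic form because $\epsilon\mathbf{F}$ is symmetric positive definite (Lemma \ref{piola} and the remark following it, together with $\epsilon>0$); the term $\int_\Omega J\kappa^2\cosh(w+G)$ is convex because $J\ge 0$, $\kappa^2\ge 0$, and $\cosh$ is convex; and $\langle f_G,w\rangle$ is linear. Convexity of $M$ itself follows from H\"older's inequality applied to $e^{\lambda v_1+(1-\lambda)v_2}=(e^{v_1})^{\lambda}(e^{v_2})^{1-\lambda}$. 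This guarantees that a minimizer is characterized by the vanishing of its G\^ateaux derivative in every admissible direction.

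Next, fix a test direction $v$ and consider the scalar function $j(t)=E(u+tv)$. Since $u$ minimizes $E$ over $M$ and $u+tv$ still agrees with $g-G$ on $\partial\Omega$ (because $v\in H^1_0(\Omega)$ vanishes there), once the admissibility $u+tv\in M$ is secured for small $t$, the function $j$ has an interior minimum at $t=0$, so $j'(0)=0$. Differentiating under the integral sign term by term, using $\frac{d}{dt}\cosh(u+tv+G)=v\sinh(u+tv+G)$, gives
\begin{eqnarray*}
j'(0) = \int_\Omega \epsilon\mathbf{F}\nabla u\cdot\nabla v + \int_\Omega J\kappa^2\sinh(u+G)\,v + \langle f_G,v\rangle = A(u,v)+(B(u),v)+\langle f_G,v\rangle .
\end{eqnarray*}
Setting $j'(0)=0$ for every admissible $v$ is precisely the weak formulation (\ref{pr_eq_weak}), and hence $u$ is the weak solution of (\ref{pr_transd}).

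The main obstacle is the rigorous justification of the differentiation under the integral for the $\cosh$ nonlinearity, together with the admissibility $u+tv\in M$. A simplifying observation is that the factor $J\kappa^2$ vanishes in the molecular region and confines the nonlinear term to $\Omega_s$, where $G\in C^\infty(\Omega_s)$ is bounded (Section \ref{PBEsolution}); thus on the relevant support $\sinh(u+G)$ is dominated by $e^{|u|}$, and $u\in M$ (i.e. $e^{\pm u}\in L^2$) yields $J\kappa^2\sinh(u+G)\in L^2(\Omega)$. Because a general $v\in H^1_0(\Omega)$ need not be bounded, $e^{u+tv}$ may fail to lie in $L^2$, so I would first establish the variational identity for bounded directions $v\in H^1_0(\Omega)\cap L^\infty(\Omega)$, for which $u+tv\in M$ is immediate. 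For such $v$ the difference quotient $t^{-1}(\cosh(u+G+tv)-\cosh(u+G))$ is monotone in $t$ by convexity of $\cosh$, so the monotone convergence theorem (applied as $t\downarrow 0$ and $t\uparrow 0$ separately) legitimizes passing the limit inside the integral, the limiting integrand $v\sinh(u+G)$ being integrable by the remarks above. Finally I would extend the identity from $H^1_0(\Omega)\cap L^\infty(\Omega)$ to all of $H^1_0(\Omega)$ by density, noting that each term in (\ref{pr_eq_weak}) is continuous in $v$ with respect to the $H^1(\Omega)$ norm once $\sinh(u+G)\in L^2(\Omega)$ is known.
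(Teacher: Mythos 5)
Your proof is correct and follows essentially the same route as the paper, which gives no argument of its own for this lemma but defers to \cite{Chen_rpbe}, where the result is established by precisely this first-variation argument for the convex energy (\ref{eng_functional}). Your additional care --- observing that $J\kappa^2$ confines the nonlinearity to $\Omega_s$ where $G$ is bounded so that $J\kappa^2\sinh(u+G)\in L^2(\Omega)$, differentiating under the integral via the monotonicity of the difference quotients of $\cosh$ for directions $v\in H^1_0(\Omega)\cap L^\infty(\Omega)$, and then extending to all of $H^1_0(\Omega)$ by truncation/density --- correctly supplies the only delicate steps, which the paper omits.
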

\begin{lemma}
Let $F(u)$ be a functional defined on $M$, if
\begin{enumerate}
\item $M$ is weakly sequential compact, and
\item $F$ is weakly lower semi-continuous on $M$,
\end{enumerate}
then there exists $u \in M$ such that
$$ F(u) = \inf_{w \in M} F(w). $$
\end{lemma}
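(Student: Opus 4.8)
The plan is to invoke the \emph{direct method of the calculus of variations}, which is exactly the abstract principle that the two hypotheses are designed to supply. First I would set $m = \inf_{w \in M} F(w)$ and, directly from the definition of the infimum, select a \emph{minimizing sequence} $\{w_n\} \subset M$ with $F(w_n) \to m$ as $n \to \infty$. At this stage I would allow $m = -\infty$ a priori, since boundedness below of $F$ has not been assumed; the argument will recover finiteness of $m$ automatically at the end.

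Next I would extract a convergent subsequence. Because hypothesis (1) asserts that $M$ is weakly sequentially compact, the minimizing sequence $\{w_n\}$ admits a subsequence $\{w_{n_k}\}$ together with an element $u \in M$ such that $w_{n_k} \weak u$. The essential point built into this notion of compactness is that the weak limit $u$ remains inside $M$; this is what makes $u$ a legitimate candidate for the minimizer rather than merely a limit in some larger ambient space.

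The decisive step is then to apply hypothesis (2). Weak lower semicontinuity of $F$ on $M$ means precisely that whenever $w_{n_k} \weak u$ in $M$ one has $F(u) \le \liminf_{k \to \infty} F(w_{n_k})$. Since $\{w_{n_k}\}$ is a subsequence of a minimizing sequence, $\lim_{k \to \infty} F(w_{n_k}) = m$, so the lower semicontinuity inequality yields $F(u) \le m$. On the other hand $u \in M$ forces $F(u) \ge \inf_{w \in M} F(w) = m$ from the definition of the infimum. Combining the two inequalities gives $F(u) = m$, which in particular shows $m > -\infty$ and that the infimum is attained at $u$, as claimed.

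I do not expect any genuine obstacle within this lemma itself: the substantive content has been front-loaded into the two hypotheses, and what remains is the standard compactness-plus-lower-semicontinuity template. The only points requiring care are bookkeeping ones, namely ensuring that passing to a subsequence does not leave $M$ and that the lower semicontinuity estimate is applied along the \emph{same} subsequence used to produce $u$. The real work, verifying that $M$ is weakly sequentially compact and that the energy $E$ in (\ref{eng_functional}) is weakly lower semicontinuous, is deferred to the remaining lemmas cited from \cite{Chen_rpbe}, so here it suffices to assemble the abstract principle.
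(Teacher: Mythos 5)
Your proof is correct and is exactly the standard direct-method argument: pick a minimizing sequence, use weak sequential compactness of $M$ to extract a weakly convergent subsequence whose limit $u$ stays in $M$, then apply weak lower semicontinuity along that same subsequence to get $F(u) \le m \le F(u)$, with the a priori case $m = -\infty$ correctly ruled out since $F(u)$ is real. The paper itself gives no proof of this lemma, deferring to \cite{Chen_rpbe}, and the argument there is precisely this one, so your proposal coincides with the intended proof.
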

\begin{lemma}
The following results hold true
\begin{enumerate}
\item Let $V$ be a reflective Banach space. The set $M := \{ v \in V | \| v \| \le r_0 \}$ is weakly
sequential compact.
\item if $\displaystyle{ \lim_{\| v \| \rightarrow \infty} F(v) = \infty }$, then
$\displaystyle{ \inf_{w \in V} F(w) = \inf_{w \in M} F(w) }$
\end{enumerate}
\end{lemma}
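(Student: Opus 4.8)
The plan is to treat the two assertions separately, since each is an abstract fact about a reflexive Banach space $V$ and a coercive functional $F$, independent of the concrete energy functional to which the lemma will ultimately be applied. Together with the preceding lemma (weak lower semicontinuity on a weakly sequentially compact set yields a minimizer), part (1) supplies the compactness and part (2) reduces the problem from all of $V$ to the ball $M$, so that the direct method of the calculus of variations applies.

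For part (1) I would invoke the standard duality machinery. First recall Kakutani's theorem: a Banach space $V$ is reflexive if and only if its closed unit ball is compact in the weak topology; by rescaling this shows that $M=\{v\in V:\|v\|\le r_0\}$ is weakly compact. To upgrade weak compactness to weak \emph{sequential} compactness I would then appeal to the Eberlein--\v{S}mulian theorem, which asserts that in any Banach space a subset is weakly compact precisely when it is weakly sequentially compact. Combining the two gives (1). If one prefers a self-contained argument in the setting actually needed here, where $V=H^1_0(\Omega)$ is separable, one can instead run a diagonal extraction: the dual is separable, so picking a countable dense family of functionals $\{\ell_k\}$ one extracts from any bounded sequence a subsequence along which every $\ell_k$ converges, and reflexivity lets one identify the common limit as a weak limit in $V$.

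For part (2) the argument is elementary and uses only coercivity. Since $M\subseteq V$ we automatically have $\inf_{w\in V}F(w)\le\inf_{w\in M}F(w)$, so only the reverse inequality needs proof. Fix a reference point $w_0\in M$ (for instance $w_0=0$) and set $c=F(w_0)$. By the hypothesis $\lim_{\|v\|\to\infty}F(v)=\infty$ there exists $R>0$ with $F(v)>c$ whenever $\|v\|>R$. Taking the radius $r_0\ge R$, every $v\in V\setminus M$ satisfies $\|v\|>r_0\ge R$, hence $F(v)>c=F(w_0)\ge\inf_{w\in M}F(w)$. Thus no point outside $M$ can lower the infimum, and $\inf_{w\in V}F(w)=\inf_{w\in M}F(w)$ follows.

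The only genuinely nontrivial ingredient is the weak sequential compactness in part (1); part (2) is a one-line coercivity estimate. Accordingly the main obstacle, should one insist on a proof that does not simply quote Kakutani and Eberlein--\v{S}mulian, is deriving sequential compactness from weak compactness from first principles. This is exactly where reflexivity must be used with care, and where the separability of the concrete space $H^1_0(\Omega)$ provides the convenient shortcut via the diagonal argument sketched above.
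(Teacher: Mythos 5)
Your proof is correct. There is nothing in the paper to compare it against: for this lemma (and the three companion lemmas surrounding it) the paper gives no argument at all, deferring instead to \cite{Chen_rpbe}. Your route is the standard one such a reference would supply: Kakutani's characterization of reflexivity via weak compactness of the closed unit ball, upgraded to weak \emph{sequential} compactness by Eberlein--\v{S}mulian, settles part (1), and the short coercivity argument settles part (2). The diagonal-extraction alternative you sketch for the separable case is also sound, since $V$ reflexive and separable forces $V^{*}$ to be separable, so the shortcut is legitimate for $H^1_0(\Omega)$. One detail you handle that the paper's statement glosses over: as literally written, with $r_0$ an arbitrary fixed radius, part (2) is false (take a coercive $F$ whose minimizer lies outside the ball); the correct reading is that $r_0$ must be chosen large enough relative to $F$, which is precisely what your choice $r_0 \ge R$ does. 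This is also how the lemma is actually used downstream, in the existence proof for the minimizer of the energy functional, where the coercivity bound is established first and the ball on which one minimizes is implicitly taken large enough afterwards.
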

\begin{lemma}
If $F$ is a convex functional on a convex set $M$ and $F$ is G\^ateaux differentiable, then $F$ is w.l.s.c. on $M$.
\end{lemma}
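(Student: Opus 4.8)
The plan is to use the classical characterization of convex Gâteaux-differentiable functionals via the \emph{gradient inequality}, and then to read off weak lower semicontinuity from the weak continuity of a single fixed bounded linear functional. First I would establish that convexity together with Gâteaux differentiability forces, for every pair $u,v \in M$,
\[
F(v) - F(u) \geq \langle F'(u), v - u \rangle,
\]
where $F'(u)$ denotes the Gâteaux derivative of $F$ at $u$. To obtain this, fix $u,v \in M$ and $t \in (0,1]$; since $M$ is convex, the point $u + t(v-u) = (1-t)u + tv$ lies in $M$, and convexity of $F$ gives $F(u+t(v-u)) \leq (1-t)F(u) + tF(v)$. Rearranging and dividing by $t$ yields
\[
\frac{F(u+t(v-u)) - F(u)}{t} \leq F(v) - F(u),
\]
and since the difference quotient on the left is nondecreasing in $t$ (again by convexity), letting $t \to 0^+$ produces a limit equal to $\langle F'(u), v-u \rangle$, which establishes the inequality.

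Next I would deduce weak lower semicontinuity directly. Take an arbitrary sequence $u_n \weak u$ in $M$ and apply the gradient inequality to the pair $(u, u_n)$:
\[
F(u_n) \geq F(u) + \langle F'(u), u_n - u \rangle.
\]
Because $F'(u)$ is a fixed bounded linear functional and $u_n - u \weak 0$, the pairing $\langle F'(u), u_n - u \rangle$ tends to $0$. Taking the $\liminf$ over $n$ on both sides then gives
\[
\liminf_{n \to \infty} F(u_n) \geq F(u),
\]
which is precisely the assertion that $F$ is w.l.s.c.\ on $M$.

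The step I expect to be the main obstacle is not the algebra of the gradient inequality, which is routine once monotonicity of the difference quotient is noted, but rather the justification that $\langle F'(u), u_n - u\rangle \to 0$ under merely \emph{weak} convergence. This is exactly the point that separates weak lower semicontinuity from ordinary lower semicontinuity, and it requires $F'(u)$ to be a genuinely bounded (hence weakly continuous) linear functional on the underlying space, not just a directional derivative. For the energy $E$ of interest this is guaranteed by the explicit structure of $A(\cdot,\cdot)$ and $B(\cdot)$ entering \eqref{eng_functional}, so that $F'(u)$ lies in the dual space and the linear term vanishes along weakly convergent sequences as claimed.
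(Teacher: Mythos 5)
Your proof is correct, but there is nothing in the paper to compare it against: this is one of four auxiliary lemmas the paper states without proof, deferring to the citation \cite{Chen_rpbe}, so your self-contained argument actually supplies more than the paper does. What you give is the standard proof of this classical fact: convexity of $t \mapsto F(u+t(v-u))$ on $[0,1]$ (legitimate since $M$ is convex) yields monotonicity of the difference quotient, hence the gradient inequality $F(v) \geq F(u) + \langle F'(u), v-u\rangle$, and then weak sequential lower semicontinuity follows by testing with $v = u_n$, $u_n \rightharpoonup u$, since the fixed functional $F'(u)$ annihilates weakly null sequences. You correctly identify the only delicate hypothesis, namely that the Gâteaux derivative must be a \emph{bounded} linear functional (some texts require only linearity of directional derivatives, and then the weak continuity step fails); for the energy $E$ of the paper this is indeed guaranteed, since $\langle DE(u),v\rangle = A(u,v) + (B(u),v) + \langle f_G,v\rangle$ with $\sinh(u+G) \in L^2(\Omega)$ for $u \in M$ (the defining condition $e^u, e^{-u} \in L^2$ of $M$ together with the $L^\infty$ bound on $G$ over the support of $\kappa$), so $DE(u) \in H^{-1}(\Omega)$. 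Two minor points worth recording: (i) your argument implicitly needs the weak limit $u$ to lie in $M$ so that $F'(u)$ is defined, which is harmless in the paper's application because there $M$ contains a closed ball that is weakly sequentially compact and the lemma is only invoked along such sequences; (ii) an alternative route avoids differentiability entirely, since a convex functional that is strongly lower semicontinuous has a convex, strongly closed epigraph, which is weakly closed by Mazur's theorem --- but your route matches the hypotheses of the lemma exactly as stated and is arguably more elementary in this setting.
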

The existence and the uniqueness of the weak solution to (\ref{pr_transd}) can be established using these lemmas. The
following lemma establishes the existence of the minimizer of the energy $E(w)$.
\begin{theorem} \label{E_minimizer}
There exists a unique $u \in M \subset H^1 (\Omega)$ such that
$$ E(u) = \inf_{w \in M} E(w).$$
\end{theorem}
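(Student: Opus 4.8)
The plan is to establish the theorem by the direct method of the calculus of variations, assembling the four preceding lemmas. Concretely, I would verify that the energy $E$ is coercive, strictly convex, and G\^ateaux differentiable on $M$. Weak lower semicontinuity then follows from the convexity-plus-differentiability lemma; coercivity combined with the abstract reduction lemma confines a minimizing sequence to a bounded, hence weakly sequentially compact, set; and the existence lemma delivers a minimizer. Strict convexity yields uniqueness, and the first lemma identifies the minimizer with the weak solution of (\ref{pr_transd}).

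\emph{Coercivity.} Since every $w \in M$ shares the fixed boundary value $g-G$, differences of elements of $M$ lie in $H^1_0(\Omega)$, so the Poincar\'e inequality applies to $w$ modulo a fixed extension of the boundary data. Using the positive definiteness of $\mathbf{F}$ one has $\int_\Omega \frac{\epsilon}{2}\mathbf{F}\nabla w\cdot\nabla w \ge \frac{\epsilon_m\gamma}{2}\|\nabla w\|_{L^2(\Omega)}^2$, while $\int_\Omega J\kappa^2\cosh(w+G)\ge 0$ because $J>0$, $\kappa^2\ge 0$, and $\cosh\ge 1$. The linear term obeys $|\langle f_G,w\rangle|\le \|f_G\|_{L^2(\Omega)}\|w\|_{H^1(\Omega)}$. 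The quadratic growth of the gradient term therefore dominates the at-most-linear term, giving $E(w)\to\infty$ as $\|w\|_{H^1(\Omega)}\to\infty$.

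\emph{Convexity and G\^ateaux differentiability.} The gradient term is a nonnegative quadratic form in $\nabla w$, hence convex, and in fact strictly convex on the affine class $M$ after using Poincar\'e; the term $\int_\Omega J\kappa^2\cosh(w+G)$ is convex, being the integral against the nonnegative weight $J\kappa^2$ of the convex function $\cosh$ composed with the affine map $w\mapsto w+G$; and $\langle f_G,\cdot\rangle$ is affine. Thus $E$ is strictly convex, which already forces uniqueness of any minimizer. For differentiability I would compute $\left.\tfrac{d}{dt}E(w+tv)\right|_{t=0}=A(w,v)+(B(w),v)+\langle f_G,v\rangle$ for $v\in H^1_0(\Omega)$, reproducing the weak form (\ref{pr_eq_weak}). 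Differentiation under the integral in the $\cosh$ term is legitimate because $\kappa$ is supported in $\Omega_s$, where $G$ is smooth and bounded, so that $\sinh(w+G)\in L^2(\Omega_s)$ follows from the defining constraint $e^{\pm w}\in L^2(\Omega)$ of $M$.

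The hard part is verifying that the weak limit of a minimizing sequence remains in $M$, i.e. that the exponential integrability $e^{\pm u}\in L^2(\Omega)$ is preserved under weak $H^1$ convergence; this is not automatic, since in three dimensions $H^1(\Omega)$ does not embed into $L^\infty(\Omega)$. This is exactly where weak lower semicontinuity of the convex $\cosh$-functional is used: for a minimizing sequence $w_n\rightharpoonup u$ in $H^1(\Omega)$, the uniform bound on $E(w_n)$ bounds $\int_\Omega J\kappa^2\cosh(w_n+G)$, and passing to an a.e.-convergent subsequence and invoking Fatou's lemma gives $\int_\Omega J\kappa^2\cosh(u+G)<\infty$, so $u\in M$; the affine boundary condition survives because the trace map is weakly continuous. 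With the minimizer shown to lie in $M$, the existence lemma applies and strict convexity gives uniqueness, completing the proof.
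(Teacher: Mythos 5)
Your proposal follows essentially the same route as the paper's proof: the direct method assembled from the four preceding lemmas, with coercivity from the positive definiteness of $\mathbf{F}$, the bound $\cosh \ge 1$, and a Cauchy--Schwarz estimate on $\langle f_G, w\rangle$; convexity of the quadratic gradient term and of the $\cosh$ term; a G\^ateaux derivative reproducing the weak form (\ref{pr_eq_weak}); and uniqueness from strict convexity. Indeed your Poincar\'e-based argument for strictness on the affine class $M$ is more careful than the paper's, since $\kappa$ vanishes in the molecular regions and the $\cosh$ term alone cannot be strictly convex there.

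The one genuine gap is in the step you yourself flag as the hard part. Since $\kappa \equiv 0$ on $\Omega_{mf} \cup \Omega_{mr}$, the uniform bound on $E(w_n)$ along a minimizing sequence controls only $\int_{\Omega_s} J \kappa^2 \cosh(w_n + G)\,dx$, so your Fatou argument yields $\cosh(u+G) \in L^1(\Omega_s)$ and says nothing whatsoever about the molecular regions; moreover, even on $\Omega_s$ it gives only $e^{\pm u} \in L^1$, whereas membership in $M$ as defined requires $e^{\pm u} \in L^2(\Omega)$ on the whole domain. Hence ``so $u \in M$'' does not follow as stated. The defect is repairable --- for instance by letting the admissible set impose exponential integrability only where the energy sees it, which is effectively the setting of \cite{Chen_rpbe}, or by first deriving a priori $L^\infty$ bounds as in Theorem (\ref{reg_NLPBeq_theorem}) --- but as written the weak closedness of $M$ is not established. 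To be fair, the paper's own proof never addresses this point either: it simply invokes the four lemmas (whose hypothesis that $M$ be weakly sequentially compact is likewise left unverified) and cites \cite{Chen_rpbe} for their proofs, so you correctly identified the soft spot even though your patch does not fully close it. A second, smaller lacuna: differentiating under the integral in the direction of a general $v \in H^1_0(\Omega)$ is delicate because $e^{tv}$ need not be integrable for $v$ merely in $H^1_0(\Omega)$; the standard fix is to test with $v \in H^1_0(\Omega) \cap L^\infty(\Omega)$ and conclude by density, which your justification via $\sinh(w+G) \in L^2(\Omega_s)$ does not quite cover.
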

\begin{proof}
The differentiability of $E(w)$ follows its definition. Actually we have
$$ \langle DE(u), v \rangle = A(u,v) + (B(u),v) + \langle f_G,v \rangle.$$
The minimizer of $E(w)$ exists if we can prove that
\begin{enumerate}
\item $M$ is a convex set
\item $E$ is convex on $M$
\item $\displaystyle{ \lim _{\|w \|_{H^1(\Omega)} \rightarrow \infty } E(w) = \infty }$
\end{enumerate}
It is easy to verify (1). The convexity of $\mathbf{F} \nabla w \cdot \nabla w $ follows from
the fact that
$$  0 \le  \mathbf{F}\nabla (\gamma w) \cdot \nabla (\gamma w) =
   \gamma^2 \mathbf{F} \nabla w \cdot \nabla w \le
   \gamma \mathbf{F} \nabla w \cdot  \nabla w $$
for any $0 \le \gamma \le 1$ since $\mathbf{F}$ is positive definite. The convexity of $\cosh(w+G)$
follows from the convexity of $\cosh(x)$ directly. Actually $E(w)$ is strictly convex. To prove (3) we only need
to show that
\begin{eqnarray}
 E(w) \ge C(\epsilon,\kappa,\mathbf{F}) \| w \|^2_{H^1(\Omega)} + C(G,g). \label{Mini_cond}
\end{eqnarray}
We notice that $\cosh(x) \ge 1$ and
\begin{eqnarray*}
<f_G,w> & \le & \epsilon_s \| \mathbf{F} \nabla G\|_{L^2(\Omega_s)} \| \nabla w\|_{L^2(\Omega_s)} \\
          & \le & \epsilon_s \| \mathbf{F} \|_{L^2(\Omega_s)} \| \nabla G\|_{L^2(\Omega_s)} \| \nabla w\|_{L^2(\Omega_s)} \\
          & \le & \frac{\epsilon_s \gamma}{2} (\| \nabla G\|^2_{L^2(\Omega_s)} + \| \nabla w\|^2_{L^2(\Omega_s)} ),
\end{eqnarray*}
where the matrix norm
$$ \gamma =
\|\mathbf{F}\|_{L^2(\Omega_s)} = \sup_{v \in M, \|v\|_{L^2(\Omega)} = 1} \|  \mathbf{F} v\|_{L^2(\Omega_s)}, $$
is finite because $\mathbf{F}$ is continuous. Therefore
\begin{eqnarray*}
E(w) & \ge & \frac{1}{2} \int_{\Omega} \epsilon \mathbf{F} \nabla w \cdot \nabla w - |<f_G,w>| \\
     & \ge & \frac{\gamma}{2} \big( \int_{\Omega_s} \epsilon_s |\nabla w|^2 + \int_{\Omega_m} \epsilon_m |\nabla w|^2
     \big )
      - \frac{\epsilon_s \gamma}{2} (\| \nabla G\|^2_{\Omega_s} + \| \nabla w\|^2_{L^2(\Omega_s)} ) \\
     & = & \frac{\gamma}{2}  \int_{\Omega_m} \epsilon_m |\nabla w|^2 -
           \frac{\epsilon_s \gamma}{2} \| \nabla G\|^2_{L^2(\Omega_s)} \\
     & \ge & C(\epsilon, \gamma) \| \nabla w\|_{L^2(\Omega)}^2 - \frac{\epsilon_s \gamma}{2} \| \nabla G\|^2_{L^2(\Omega_s)}.
\end{eqnarray*}
The inequality (\ref{Mini_cond}) follows from the equivalence of $\|\nabla w\|_{L^2(\Omega)}$ and
$\|w\|_{H^1(\Omega)}$ on set $M$. The uniqueness of the minimizer of $E(w)$ comes from the strict convexity
of $E$.  \qed
\end{proof}

\begin{theorem} \label{reg_NLPBeq_theorem}
There exists a unique solution $\phi^r$ of (\ref{reg_LPBeq}) in $H^1(\Omega)$. Moreover, there
exist constants $C_1,C_2$ and $C_3$ such that $\phi^r$ is bounded by
\begin{eqnarray}
\| \phi^r \|_{L^{\infty}(\Omega)}
\le C_1 + C_2 \| J\|_{L^{\infty}(\Omega)}  + C_3 \| J\|_{L^{\infty}(\Omega)}
\| \mathbf{F} \|_{W^{1,p}(\Omega_s)}. \label{linfty_nlpbe_estimate}
\end{eqnarray}
\end{theorem}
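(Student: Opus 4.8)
The plan is to prove the two assertions in turn, reading the first reference as (\ref{reg_NLPBeq}) rather than (\ref{reg_LPBeq}). Existence and uniqueness in $H^1(\Omega)$ require essentially no new work: Theorem \ref{E_minimizer} produces a unique minimizer $u \in M \subset H^1(\Omega)$ of the energy $E$, and by the lemma preceding it this minimizer is exactly the weak solution of (\ref{pr_transd}), i.e.\ of (\ref{reg_NLPBeq}); strict convexity of $E$ gives uniqueness. All of the effort therefore goes into the $L^\infty$ estimate.

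For the bound I would reuse the linear-case splitting of Lemma \ref{linfty_lemma}, writing $\phi^r = \phi^l + \phi^n$ where $\phi^l$ solves the linear interface problem (\ref{pl_transd}) and $\phi^n$ solves the nonlinear remainder (\ref{pn_transd}) with homogeneous boundary data; adding the two problems and using $\sinh((\phi^l+\phi^n)+G)=\sinh(\phi^r+G)$ recovers (\ref{reg_NLPBeq}) exactly, so it suffices to bound each piece in $L^\infty(\Omega)$. For $\phi^l$ I would note that (\ref{pl_transd}) is an elliptic interface problem whose coefficients $\epsilon\mathbf{F}$ are discontinuous across $\Gamma$ but continuous in each subdomain, with no surviving interior source (since $\nabla\cdot(\mathbf{F}\nabla G)=0$ in $\Omega_s$) and data consisting only of the boundary value $g-G$ on $\partial\Omega$ and the flux jump $[\epsilon]\mathbf{F}\nabla G\cdot\mathbf{n}$ on $\Gamma$. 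The iterative argument of Theorem \ref{reg_LPBeq_theorem} (resting on Theorem \ref{w2p_theorem} and $\|\mathbf{F}-\mathrm{I}\|_{W^{1,p}(\Omega)}\le C_f$) then gives $\phi^l\in\mathcal{W}^{2,p}(\Omega)$, with the lower-order $\|\phi^l\|_{L^p(\Omega)}$ term absorbed through the $H^1$ energy bound and $H^1(\Omega)\hookrightarrow L^p(\Omega)$. Since $p>3$ gives $\mathcal{W}^{2,p}(\Omega)\hookrightarrow L^\infty(\Omega)$, and since Lemma \ref{lemma_green}(e),(f) express the two data norms through $\|g\|_{W^{2,p}(\Omega_s)}$, $\|G\|_{L^\infty(\Omega_s)}$, and $\|\mathbf{F}\|_{W^{1,p}(\Omega_s)}\|G\|_{L^\infty(\Omega'_s)}$, the bound $\|G\|_{L^\infty(\Omega_s)}\le C\|J\|_{L^\infty(\Omega)}$ of Lemma \ref{lemma_green}(a) already exhibits exactly the three advertised structures.

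For $\phi^n$ I would run the truncation (maximum-principle) argument of Lemma \ref{linfty_lemma}, but with the cut-off level set at the zero of the nonlinearity. Taking $\alpha=\|\phi^l+G\|_{L^\infty(\Omega_s)}$ and testing the weak form of (\ref{pn_transd}) with $\phi_t=\max(\phi^n-\alpha,0)\in H^1_0(\Omega)$, one has $\phi^n+\phi^l+G>0$ on $\{\phi^n>\alpha\}\cap\Omega_s$, so the $\sinh$ term is there nonnegative, while $\kappa=0$ in $\Omega_m$ removes it off the solvent; the coercive Dirichlet term and the sign-definite $\sinh$ term then sum to zero, forcing $\nabla\phi_t=0$ and hence $\phi_t=0$ by Poincar\'e. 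The symmetric choice $\min(\phi^n+\alpha,0)$ gives the lower bound, so $\|\phi^n\|_{L^\infty(\Omega)}\le\|\phi^l\|_{L^\infty(\Omega)}+\|G\|_{L^\infty(\Omega_s)}$. Adding the two estimates and inserting Lemma \ref{lemma_green}(a) then assembles (\ref{linfty_nlpbe_estimate}), with $C_1$ coming from $\|g\|_{W^{2,p}(\Omega_s)}$, $C_2\|J\|_{L^\infty(\Omega)}$ from $\|G\|_{L^\infty(\Omega_s)}$, and $C_3\|J\|_{L^\infty(\Omega)}\|\mathbf{F}\|_{W^{1,p}(\Omega_s)}$ from the interface flux.

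The hard part is obtaining a genuine $L^\infty$ (not merely $H^1$) bound. A direct energy estimate on $\phi^r$, or an undecomposed truncation, controls only $\|\nabla\phi_t\|_{L^2(\Omega)}$ and stops short of boundedness, which is why the split is essential: the sup-norm must enter through $\mathcal{W}^{2,p}$-regularity for the linear piece and through the sign of $\sinh$ for the nonlinear piece. The most delicate single point is controlling $\phi^l$ against the apparently singular flux $\mathbf{F}\nabla G$ on $\Gamma$; this is legitimate only because all charges lie in $\Omega_{mf}\cup\Omega_{mr}$, so that $G$ is harmonic and hence smooth and bounded throughout $\overline{\Omega}_s$, and because $\kappa$ vanishes off the solvent, confining the nonlinearity to the region where $G$ is well behaved. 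Some care is also needed to ensure that $\phi_t$ is an admissible test function given the exponential growth encoded in the set $M$.
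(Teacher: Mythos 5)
Your proposal is correct and follows the paper's proof at every structural point: existence and uniqueness via Theorem \ref{E_minimizer} and the lemmas preceding it, the splitting $\phi^r=\phi^l+\phi^n$ through (\ref{pl_transd}) and (\ref{pn_transd}), the truncation argument for $\phi^n$ (you cut at $\alpha=\|\phi^l+G\|_{L^\infty(\Omega_s)}$ while the paper uses the slightly larger $\beta=\|\phi^l\|_{L^\infty(\Omega_s)}+\|G\|_{L^\infty(\Omega_s)}$, which changes nothing), and the assembly of $C_1$, $C_2$, $C_3$ from Lemma \ref{lemma_green}; your reading of the misprinted reference (\ref{reg_LPBeq}) as (\ref{reg_NLPBeq}) also matches the paper's evident intent. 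The one genuine divergence is the linear piece. The paper disposes of $\phi^l$ with the cited inequality (\ref{pl_estimate}), $C_1\|\phi^l\|_{L^{\infty}}\le\|\phi^l\|_{H^1}\le C_2(\dots)$, which as literally written is suspect in three dimensions ($H^1(\Omega)$ does not embed in $L^\infty(\Omega)$); what is evidently intended is a De Giorgi--Stampacchia-type bound of $\|\phi^l\|_{L^\infty}$ directly by the data, in the spirit of Lemma \ref{linfty_lemma}. You instead route through the $\mathcal{W}^{2,p}$ iteration of Theorem \ref{reg_LPBeq_theorem} and the embedding $\mathcal{W}^{2,p}(\Omega)\hookrightarrow L^\infty(\Omega)$ for $p>3$, which yields a fully justified sup-norm bound but at two small costs worth flagging. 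First, it imports the smallness hypothesis $\|\mathbf{F}-\mathrm{I}\|_{W^{1,p}(\Omega)}\le C_f$, which this theorem's statement does not contain (it appears only in the subsequent $\mathcal{W}^{2,p}$ theorem), so your version proves a formally weaker statement than the paper intends, though one consistent with the standing assumptions used throughout Section \ref{PBEsolution}. Second, your absorption of the lower-order term via $H^1(\Omega)\hookrightarrow L^p(\Omega)$ is valid only for $p\le 6$ in three dimensions; for $p>6$ you should instead eliminate the $\|\phi^l\|_{L^p(\Omega)}$ term exactly as the paper does inside the proof of Theorem \ref{reg_LPBeq_theorem}, namely by appealing to Lemma \ref{linfty_lemma}. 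With those two repairs, your argument is sound, and arguably cleaner at the one point where the paper's own write-up is shakiest.
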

\begin{proof}
The existence of the solution $\phi^r$ in $H^1(\Omega)$ has been proved by theorem (\ref{E_minimizer}) and its
four lemmas. It remains to verify the $L^{\infty}$ bounds of $\phi^r$. Let $\phi^r = \phi^l + \phi^n$ be decomposed
into a linear component $\phi^l$ and a nonlinear component.
The linear component $\phi^l$ satisfies Eq.(\ref{pl_transd}). The existence of a weak
solution $\phi^l \in H^1_0(\Omega)$ follows that $ \nabla \cdot ((\epsilon - \epsilon_m) \mathbf{F}\nabla G)$ is an operator in $H^{-1}(\Omega)$ \cite{Weinberger_greenfunc}. It is well known that in general
\begin{eqnarray}
C_1 \| \phi^l\|_{L_{\infty}} \le \| \phi^l \|_{H^1} \le C_2 ( \|g-G\|_{H^{1/2}(\partial \Omega)} + \|f_G\|_{H^{1/2}(\Gamma)} ). \label{pl_estimate}
\end{eqnarray}

To estimate the nonlinear component we follow \cite{Chen_rpbe} and define:
\begin{eqnarray*}
\alpha ' & = &
   \arg \max_{c} \{ J \kappa^2 \sinh(c + \sup_{x \in \Omega_s} \phi^l  + \sup_{x \in \Omega_s} G ) \le 0 \}, \\
\beta ' & = &
   \arg \min_{c} \{ J\kappa^2 \sinh(c + \inf_{x \in \Omega_s} \phi^l  + \inf_{x \in \Omega_s} G ) \ge 0 \}, \\
\alpha & = & \min (\alpha',0), \\
\beta  & = & \max (\beta', 0).
\end{eqnarray*}
It follows from the monotonicity of $\sinh(x)$ that
\begin{eqnarray}
\beta  & = & \| \phi^l \|_{L^{\infty}(\Omega_s)} + \| G \|_{L^{\infty}(\Omega_s)}, \label{beta_final} \\
\alpha & = & -\beta. \label{alpha_final}
\end{eqnarray}
We will show that $\alpha$ and $\beta$ are the lower and upper $L^{\infty}$ bounds of the nonlinear
component $\phi^n$ of the weak solution to (\ref{reg_NLPBeq}), following the similar
procedure as that used in proving Lemma (\ref{linfty_lemma}).

Define $$ \phi_t = \max (\phi^n - \beta, 0),$$
then $ \trace \phi_t =0 $ since $\phi^n \in H^1_0(\Omega)$ and $\beta >0$ by definition.
Therefore $\phi_t \in H_0^1(\Omega)$ and satisfies the weak formulation of Eq.(\ref{pn_transd}):
$$ (\epsilon \mathbf{F} \nabla \phi^n, \nabla \phi_t) + \big( J \kappa^2
\sinh(\phi^n + \phi^l + G), \phi_t \big) = 0.$$
Since $\phi_t \ge 0$ wherever $\phi^n \ge \beta$, we have
$$ J(\mathbf{u}) \kappa^2 \sinh(\phi^n + \phi^l + G) \ge
   J(\mathbf{u}) \kappa^2 \sinh(\beta + \inf_{x \in \Omega_s} \phi^l + \inf_{x \in \Omega_s} G ) \ge 0.$$
Therefore
$$ 0 \ge (\epsilon \mathbf{F} \nabla \phi^n, \nabla \phi_t) =
          (\epsilon \mathbf{F} \nabla (\phi^n - \beta), \nabla \phi_t)
          = \epsilon \mathbf{F} \nabla \phi_t \cdot \nabla \phi_t \ge 0,$$
where the last inequality holds true since $\mathbf{F}$ is positive definite.
Hence $\nabla \phi_t=0$, and $\phi_t=0$ or $\phi^n \le \beta$ in $\Omega$ follows from the Poincare
inequality. This establishes the upper bound of $\phi^n$. By changing $\phi_t$ to be $\min(\phi^n + \alpha, 0)$
we can also prove that $\alpha$ is the lower bound.

Combining the estimates for $\phi^l$ and $\phi^n$ we finally obtain the $L^{\infty}$ estimate
of the regular component $\phi^r$:
\begin{eqnarray*}
\| \phi^l + \phi^n \|_{L^{\infty}(\Omega)} & \le &
\|\phi^l \|_{L^{\infty}(\Omega)} + \| \phi^n \|_{L^{\infty}(\Omega)} 
\\
& \le &
\|\phi^l \|_{L^{\infty}(\Omega)} + \| \phi^l \|_{L^{\infty}(\Omega_s)}  + \| G \|_{L^{\infty}(\Omega_s)} \nonumber \\
& \le & 2(C_g + C_G \| J\|_{L^{\infty}(\Omega)} + C_{f_G} \| \mathbf{F} \|_{W^{1,p}(\Omega_s)} \| J\|_{L^{\infty}(\Omega)} 
\\
& & ~~~~~ +
\frac{\| J\|_{L^{\infty}(\Omega)} N K q_{max}}{\delta}  \nonumber \\
& = & C_1 + C_2 \| J\|_{L^{\infty}(\Omega)}  + C_3 \| J\|_{L^{\infty}(\Omega)}
\| \mathbf{F} \|_{W^{1,p}(\Omega_s)}. \qed \label{phis_estimate}
\end{eqnarray*}
\end{proof}

We are now able to examine the regularity results and the estimate of $\phi^r$ in $\mathcal{W}^{2,p}$.
\begin{theorem}
If $\| \mathbf{F} - \mathrm{I} \|_{W^{1,p}(\Omega)} \le C_f$ then the unique solution $\phi^r$
of (\ref{reg_NLPBeq}) belongs to $\mathcal{W}^{2,p}(\Omega)$ and the following estimate holds
\begin{eqnarray}
\hspace*{-0.5cm}
\| \phi^r \|_{\mathcal{W}^{2,p}(\Omega)} \le C \left ( \| G \|_{L^p(\Omega_s)} +
\| g -G \|_{W^{2-1/p,p}(\partial \Omega)}
+ \|\mathbf{F} \nabla G \|_{W^{1-1/p,p}(\Gamma)} \right ) . \label{pr_nlpbe_estimate}
\end{eqnarray}
\end{theorem}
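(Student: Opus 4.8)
The plan is to exploit the fact that existence and a pointwise bound for $\phi^r$ are already in hand: Theorem (\ref{reg_NLPBeq_theorem}) gives a solution $\phi^r \in H^1(\Omega)$ together with control of $\|\phi^r\|_{L^\infty(\Omega)}$ through the estimate (\ref{linfty_nlpbe_estimate}). The essential observation is that, once the solution is known to be bounded, the exponential nonlinearity can be \emph{frozen} and moved to the right-hand side, reducing (\ref{reg_NLPBeq}) to a linear interface problem of exactly the type handled by Theorem (\ref{reg_LPBeq_theorem}). The whole argument is therefore a bootstrap: the low-regularity $H^1\cap L^\infty$ solution is promoted to $\mathcal{W}^{2,p}(\Omega)$ by the linear theory, with the nonlinear theorem supplying the $L^\infty$ control needed to guarantee that the frozen source lies in $L^p$.

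Concretely, first I would rewrite (\ref{reg_NLPBeq}) in the form (\ref{reg_LPBeq}) by adding and subtracting the linear zeroth-order term:
\[
-\nabla\cdot(\epsilon\mathbf{F}\nabla\phi^r) + J\kappa^2(\phi^r + G) = \nabla\cdot((\epsilon-\epsilon_m)\mathbf{F}\nabla G) + f, \qquad f := J\kappa^2\big[(\phi^r+G) - \sinh(\phi^r+G)\big].
\]
The key point is to verify that $f \in L^p(\Omega)$. Because $\kappa \equiv 0$ throughout $\Omega_m$, the source $f$ is supported in the solvent region $\Omega_s$; there $\phi^r \in L^\infty(\Omega_s)$ by Theorem (\ref{reg_NLPBeq_theorem}) and $G \in L^\infty(\Omega_s)$ (indeed $G \in C^\infty(\Omega_s)$, bounded by Lemma (\ref{lemma_green})), so $\phi^r + G$ is bounded on the support of $\kappa^2$. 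Hence $|f| \le \kappa^2\|J\|_{L^\infty}\big(M + \sinh M\big)$ with $M := \|\phi^r + G\|_{L^\infty(\Omega_s)}$, giving $f \in L^\infty(\Omega)\subset L^p(\Omega)$. This step crucially sidesteps the singular region, where $G$ blows up but $\kappa$ vanishes.

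With $f \in L^p(\Omega)$ established, I would apply Theorem (\ref{reg_LPBeq_theorem}) — legitimate precisely because the hypothesis $\|\mathbf{F}-\mathrm{I}\|_{W^{1,p}(\Omega)} \le C_f$ is its standing requirement — to conclude $\phi^r \in \mathcal{W}^{2,p}(\Omega)$ together with estimate (\ref{pr_lpbe_estimate}) for this particular $f$. It then remains to dispose of the $\|f\|_{L^p(\Omega)}$ contribution so as to recover the clean form (\ref{pr_nlpbe_estimate}). Here I would invoke the $L^\infty$ bound (\ref{linfty_nlpbe_estimate}) and Lemma (\ref{lemma_green}): under the standing hypothesis $\|J\|_{L^\infty(\Omega)}$ and $\|\mathbf{F}\|_{W^{1,p}(\Omega_s)}$ are bounded, so $M$ — and therefore $\|f\|_{L^p(\Omega)}$ — is controlled by the data $\|G\|_{L^\infty(\Omega_s)}$ and $\|g-G\|$, and is thus absorbed into the generic constant and the three data terms on the right of (\ref{pr_nlpbe_estimate}).

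The main obstacle, and the place requiring the most care, is the exponential character of the frozen source: since $\|f\|_{L^p(\Omega)}$ depends on $M$ through $\sinh M$, the passage from (\ref{pr_lpbe_estimate}) to the advertised estimate forces the generic constant $C$ to depend (in a data-admissible way) on the a priori $L^\infty$ bound of the solution. One must also guard against circularity: existence and the $L^\infty$ bound must be secured first and independently — they come from the energy-minimization argument of Theorem (\ref{E_minimizer}) and Theorem (\ref{reg_NLPBeq_theorem}), \emph{before} any $\mathcal{W}^{2,p}$ regularity is claimed — and only afterwards is the frozen-coefficient linear theory applied. The remaining checks, that the boundary and interface data $g-G$ and $\mathbf{F}\nabla G\cdot\mathbf{n}$ carry the regularity demanded by Theorem (\ref{reg_LPBeq_theorem}), are routine and already quantified by parts (e) and (f) of Lemma (\ref{lemma_green}).
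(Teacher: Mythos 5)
Your proposal is correct and follows essentially the same route as the paper's own proof: rewrite (\ref{reg_NLPBeq}) as the linear interface problem (\ref{reg_LPBeq}) with frozen source $f = -J\kappa^2\left(\sinh(\phi^r+G) - (\phi^r+G)\right)$, observe that this source is bounded (hence in $L^p$) thanks to the a priori $L^\infty$ control of $\phi^r$ from Theorem (\ref{reg_NLPBeq_theorem}) and the vanishing of $\kappa$ off the solvent region, and then invoke Theorem (\ref{reg_LPBeq_theorem}) under the hypothesis $\|\mathbf{F}-\mathrm{I}\|_{W^{1,p}(\Omega)} \le C_f$. Your treatment is in fact somewhat more explicit than the paper's about the non-circularity of the bootstrap and about absorbing $\|f\|_{L^p(\Omega)}$ into the constant, but the argument is the same.
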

\begin{proof}
It is noticed that the problem (\ref{reg_NLPBeq}) can be written as a form similar to its linear counterpart
(\ref{reg_LPBeq})
\begin{eqnarray*}
-\nabla \cdot (\epsilon \mathbf{F} \nabla \phi^r ) + J \kappa^2 (\phi^r + G)  & = &
\nabla \cdot ((\epsilon - \epsilon_m) \mathbf{F} \nabla G) \\ 
& & - J \kappa^2 ( \sinh(\phi^r + G) - (\phi^r + G)) ~\mbox{in}~ \Omega  \\
\left [ \phi^r \right ] & = &  0, ~\mbox{on}~ \Gamma  \\
\phi^r   & = & g - G ~\mbox{on}~ \partial \Omega.
\end{eqnarray*}
According to Theorem (\ref{reg_LPBeq_theorem}), the $\mathcal{W}^{2,p}$ regularity of $\phi^r$
directly follows from the facts that
$\nabla \cdot ((\epsilon - \epsilon_m) \mathbf{F} \nabla G)$ represents an interface
condition in $W^{2-1/p,p}(\Omega)$ and that $J \kappa^2 ( \sinh(\phi^r + G) - (\phi^r + G)) \in L^{\infty}$.
In the mean time, we have an estimate
\begin{eqnarray*}
\| \phi^r \|_{\mathcal{W}^{2,p}(\Omega)} \le C \left ( \| G \|_{L^p(\Omega_s)} +
\| g -G \|_{W^{2-1/p,p}(\partial \Omega)}
+ \|\mathbf{F} \nabla G \|_{W^{1-1/p,p}(\Gamma)} \right ) .  \qed
\end{eqnarray*}
\end{proof}

\section{An Electrostatic Force Model and Some Estimates}
For the untransformed nonlinear Poisson-Boltzmann equation (\ref{PBE_1}) the electrostatic energy of the system
is defined \cite{Honig_90,Gilson_Eforce,Davis_Eforce} to be
\begin{eqnarray}
E  = \int_{\Omega} [\rho^f \phi - \frac{1}{2} \epsilon (\nabla
\phi)^2 - \kappa^2 (\cosh( \phi) -1)\chi ] dx, \label{Eng_def}
\end{eqnarray}
where the characteristic function $\chi=1$ in $\Omega_s$ and is $0$
in molecules $\Omega_{mf}, \Omega_{mr}$. This energy is very similar
to the energy functional defined in Eq.(\ref{eng_functional}), and
any potential function $\phi$ minimizing (\ref{eng_functional}) is
also the minimizer of this electrostatic energy because $\cosh(x)
\ge 1$. The function $\cosh( \phi) -1$ describes the physical fact
that the total electrostatic energy is zero when $\phi$ is
everywhere zero. The three terms in this energy represent three
types of energy densities, namely, the Coulomb energy, the
electrostatic stress energy and the osmotic stress energy of the
mobile ions. Based on this energy function, the following density
function of the force exerted on the molecule was derived
\cite{Davis_Eforce} by using a variational derivation method:
\begin{eqnarray}
\mathbf{f} = \rho^f \mathbf{E} - \frac{1}{2} |\mathbf{E}|^2 \nabla
\epsilon - \kappa^2 (\cosh(\phi)-1)  \nabla \chi,
\label{e_force_def}
\end{eqnarray}
where the three terms correspond to the Coulomb force, dielectric
pressure and the ionic pressure, respectively. The last two boundary
forces are always in the normal direction of the molecular surface
because of the gradients of $\epsilon$ and the characteristic
function $\chi$. The electric force defined in (\ref{e_force_def})
is physically justifiable, and can be converted into a form
identical to the Maxwell stress tensor(MST)
\cite{Gilson_Eforce,Jones_MST}. The MST describes the volume force
density in a linear dielectric, and has been widely utilized in
dielectrophoretic force and electrorotational torque calculations of
colloids, macromolecules and biological cells in continuous external
electric field \cite{Jones_MST}. In the context of interactions
between singular charges distribution and resulting singular
electric field, refinements are necessary to make this force model
computationally more tractable. Below we will discuss the treatments
of its three components.

The first term in Eq. (\ref{e_force_def}) might appear misleading because of the multiplication of two singular
functions, $\rho^f$ and $\mathbf{E}$, in its expression. We therefore would emphasis that at a singular change $x_i$
the electric potential field multiplied with $\rho^f$
in Eq.(\ref{Eng_def}) shall be interpreted as the summation of reaction potential field $\phi^r$, i.e.,
the regular component of the potential solution, and the Coulomb potential induced by all other singular
charges \cite{Davis_Eforce}:
\begin{eqnarray}
\rho^f \mathbf{E} = \sum_{i} q_i \delta(x_i) \mathbf{E} & := & \sum_{i}
q_i \delta(x_i) \nabla \left( \phi^r(x) + \sum_{j \ne i} G_j(x)
\right) \nonumber \\
 & =  & \sum_{i} q_i \nabla \left ( \phi^r(x_i) + \sum_{j \ne i}
G_j(x_i) \right ) \delta(x_i). \label{force_sing_def}
\end{eqnarray}
This verifies that the force exerted at each charged atom is finite.
The eliminated term $G_i(x_i)$ corresponds to the self-energy of the
singular charges \cite{Davis_Eforce}.

Nevertheless, the body force density $\rho^f \mathbf{E}$ itself is still unbounded at the center of
every charged atom where the charge density is singular,
indicating that this body force density does not belongs to $L^p(\Omega_{mf})$
hence does not fit the assumption on the body force in Theorem (\ref{thm_3Delasticity}).
An alternative model is therefore necessary to regularize
these singular body forces to ensure the solvability of the elasticity equation.
In this study, the singular the body force density is modeled by a Gaussian function
\begin{eqnarray}
\mathbf{f_b} & = & \sum_{i} a_i e^{-(x-x_i)^2/\sigma_i} \mathbf{n}_i, \label{f_body_2}
\end{eqnarray}
where the unit normal vector is aligned with the corresponding gradient in Eq.(\ref{force_sing_def});
the  decay parameter $\sigma_i$ is chosen such that
\begin{eqnarray}
\sum_{i} a_i e^{-R_i^2/\sigma_i} = \delta \label{sigma_def}
\end{eqnarray}
for a given sufficiently small number $\delta$, and $R_i$ is the van der Waals' radius of atom $i$.
This means that the Gaussian function is essentially compact supported in its associated atom.
The prefactor $a_i$ is determined by the conservation of force in each atom:
\begin{eqnarray}
\int_{atom_i} a_i e^{-(x-x_i)^2/\sigma_i} dx = q_i [ \phi^r(x_i) + \sum_{j \ne i} G_j(x_i) ] =
4\pi a_i \int_{0}^{R_i} r^2 e^{-r^2/\sigma_i} dr. \label{a_def}
\end{eqnarray}
The body force $\mathbf{f}_b$ modeled by this Gaussian is uniformly
continuous in $\Omega_{mf}$ and belongs to $L^p(\Omega_{mf})$ for
any $p>0$. Moreover, the lemma below proves that the difference of
two continuous body force densities also belongs to
$L^p(\Omega_{mf})$, and is small if the difference between two total
body forces which they approximate is small.
\begin{lemma} \label{lemma_fb_lp}
Let $A_1, A_2$ be two given numbers and $|A_1| \le P, |A_2| \le P$ for some $P$. Let
$$f_j = a_j e^{-(x-x_0)^2/\sigma_j} \quad \mbox{such that} \quad
 \int_{x-x_0 \le R} a_j e^{-(x-x_0)^2/\sigma_j}dx = A_j \quad \mbox{for} \quad j=1,2, $$
where $a_j,\sigma_j$ are determined from Eqs.(\ref{a_def},\ref{sigma_def}) for the same atom centered at $x_0$ and
of radius $R$. Then if $|A_1 - A_2| \le \delta'$ for some $\delta' >0$, we have
\begin{eqnarray}
\int_{x-x_0 \le R} |f_1 - f_2|^p dx \le C \delta' \label{fb_lp}
\end{eqnarray}
for some constant $C$ depending only on $R$ and $P$.
\end{lemma}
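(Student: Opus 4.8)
The plan is to exploit the explicit two–parameter structure of the Gaussians and to collapse the whole estimate onto a single, monotone relation between the decay rate and the total force. First I would eliminate the prefactor by means of the boundary condition (\ref{sigma_def}): applied to the single atom centered at $x_0$ it reads $a_j e^{-R^2/\sigma_j}=\delta$, so $a_j=\delta\, e^{R^2/\sigma_j}$ and hence
\[
  f_j(x)=a_j e^{-|x-x_0|^2/\sigma_j}=\delta\,\exp\!\big((R^2-|x-x_0|^2)\,t_j\big),
  \qquad t_j:=1/\sigma_j\ge 0 .
\]
This identity is the key simplification: on the ball $|x-x_0|\le R$ the exponent lies in $[0,R^2 t_j]$, so each $f_j$ is automatically bounded, and the only remaining freedom is the single scalar $t_j$.

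Next I would make the dependence of $t_j$ on the total force $A_j$ explicit. Writing the normalization (\ref{a_def}) in spherical coordinates gives $A_j=\Psi(t_j)$, where $\Psi(t):=4\pi\delta\int_0^R r^2 e^{(R^2-r^2)t}\,dr$. The function $\Psi$ is smooth and strictly increasing on $[0,\infty)$ because
\[
  \Psi'(t)=4\pi\delta\int_0^R r^2\,(R^2-r^2)\,e^{(R^2-r^2)t}\,dr>0,
\]
the integrand being positive on $(0,R)$; moreover $\Psi(0)=\tfrac{4}{3}\pi\delta R^3$ and $\Psi(t)\to\infty$. Consequently the admissible data $A_j\in[\Psi(0),P]$ correspond to $t_j\in[0,t_{\max}]$ with $t_{\max}:=\Psi^{-1}(P)<\infty$, a compact interval on which $\Psi'\ge\mu$ for some $\mu=\mu(R,P,\delta)>0$. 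Inverting, $t=\Psi^{-1}(A)$ is Lipschitz, $|t_1-t_2|\le\mu^{-1}|A_1-A_2|$. Establishing this monotonicity and the lower bound on $\Psi'$ over the compact range is the main obstacle; everything after it is bookkeeping.

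With the Lipschitz control of $t$ in hand I would derive a uniform pointwise estimate. For $|x-x_0|\le R$ and $t\in[0,t_{\max}]$,
\[
  \Big|\tfrac{\partial f}{\partial t}(x;t)\Big|
  =\delta\,(R^2-|x-x_0|^2)\,e^{(R^2-|x-x_0|^2)t}\le \delta R^2 e^{R^2 t_{\max}}=:C_0,
\]
so that, integrating in $t$ from $t_2$ to $t_1$, $|f_1(x)-f_2(x)|\le C_0|t_1-t_2|\le C_0\mu^{-1}|A_1-A_2|$ uniformly on the ball. Raising to the $p$-th power and integrating gives $\int_{|x-x_0|\le R}|f_1-f_2|^p\,dx\le (C_0\mu^{-1})^p\,\tfrac{4}{3}\pi R^3\,|A_1-A_2|^p$. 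Finally, since $|A_1-A_2|\le|A_1|+|A_2|\le 2P$, I would convert the $p$-th power into a linear bound via $|A_1-A_2|^p\le(2P)^{p-1}|A_1-A_2|\le(2P)^{p-1}\delta'$, obtaining $\int|f_1-f_2|^p\,dx\le C\delta'$ with $C=(C_0\mu^{-1})^p\,\tfrac{4}{3}\pi R^3\,(2P)^{p-1}$ depending only on $R$, $P$ (and the fixed global constant $\delta$). The degenerate case in which $A_1,A_2$ carry opposite signs is immediate: then $\delta'=|A_1-A_2|$ is bounded below while each $f_j$ is uniformly bounded, so $\int|f_1-f_2|^p$ is controlled by a constant and hence by a multiple of $\delta'$; the generic case is the one treated above.
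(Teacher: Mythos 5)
Your proof is correct and takes essentially the same approach as the paper's: both arguments rest on showing that the Gaussian parameters depend Lipschitz-continuously on the total force $A$, which gives the uniform pointwise bound $|f_1(x)-f_2(x)|\le C|A_1-A_2|$, after which one integrates over the ball and converts the resulting $|A_1-A_2|^p$ into a linear bound $C\delta'$ using $|A_1-A_2|\le 2P$. The only difference is completeness: the paper merely asserts that $a$ and $\sigma$ are uniformly continuous (differentiable) functions of $A$ and cites "the derivatives of $a$ and $\sigma$ with respect to $A$", whereas you actually prove this by eliminating $a$ via the constraint $a=\delta e^{R^2/\sigma}$, reducing to the single parameter $t=1/\sigma$, and establishing strict monotonicity of $\Psi(t)=A$ together with a positive lower bound on $\Psi'$ over the admissible compact range.
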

\begin{proof}
The prefactor $a$ and the decay rate $\sigma$ are uniformly continuous functions of $A$ for $|A| \le P$
if $$f=a e^{-(x-x_0)^2/\sigma}$$ is the approximation of $A$ as defined by the lemma.
But then there exists a constant $C$ depending on the derivatives of $a$ and $\sigma$ with respect to
$A$ such that $|f_1(x) - f_2(x)| \le C \delta'$ if $|A_1 - A_2| \le \delta'$. The conclusion of the lemma
follows directly. \qed
\end{proof}

The last two terms in Eq.(\ref{e_force_def}) represent the
electrostatic surface forces on the molecule. It is worth noting
that the second term is not well defined and is computationally
intractable if there is no dielectric boundary smoothing, due to the
discontinuous electric field $\mathbf{E}$ on the molecular surface
indicated by the interface condition
\begin{eqnarray*}
\epsilon_m \nabla \phi_m \cdot \mathbf{n} = \epsilon_s \nabla \phi_s \cdot \mathbf{n} \qquad \mbox{or}
\qquad \epsilon_m \mathbf{E}_m \cdot \mathbf{n} = \epsilon_s \mathbf{E}_s \cdot \mathbf{n}.
\end{eqnarray*}
To remove this ambiguity we consider a infinitesimal displacement
$h$ of the molecular surface in its out normal direction, see
Fig.(\ref{interface_move}). The change of the electrostatic stress
energy due to this small displacement is the work done by the
dielectric pressure along this displacement:
\begin{eqnarray*}
   \int_{\Omega_s' + \Omega_m'} -\frac{1}{2} \epsilon |\mathbf{E}|^2 dx
- \int_{\Omega_s + \Omega_m} - \frac{1}{2} \epsilon | \mathbf{E}|^2 dx & = & \int_{\Gamma_f \times h} -\frac{1}{2} (\epsilon_s |\mathbf{E}_s|^2 - \epsilon_m |\mathbf{E}_m |^2) dx \\
& = & h \int_{\Gamma_f} \mathbf{f}_e ds.
\end{eqnarray*}
This suggests the dielectric force density $\mathbf{f}_e$ is essentially the difference between
$-\frac{1}{2} \epsilon_s | \mathbf{E}_s|^2$ and $-\frac{1}{2} \epsilon_m | \mathbf{E}_m|^2$ on the dielectric interface, i.e.,
\begin{eqnarray}
\mathbf{f}_e =  -\frac{1}{2} (\epsilon_s | \mathbf{E}_s|^2 - \epsilon_m | \mathbf{E}_m|^2) \mathbf{n}. \label{fe_def}
\end{eqnarray}
By combining definitions (\ref{e_force_def}), (\ref{f_body_2}) and (\ref{fe_def}) we would obtain a complete
model of the electrostatic body force and surface force:
\begin{eqnarray}
\mathbf{f_b} & = & \sum_{i} a_i e^{-(x-x_i)^2/\sigma_i} \mathbf{n}_i, \label{fb_def} \\
\mathbf{f_s} & = &  -\frac{1}{2} (\epsilon_s | \mathbf{E}_s|^2 - \epsilon_m | \mathbf{E}_m|^2) \mathbf{n} -
\kappa^2 (\cosh(\phi) -1) \mathbf{n}. \label{fs_def}
\end{eqnarray}
\begin{remark}
In the sequel we will estimate the term $\| \mathbf{f}_s
\|_{W^{1-1/p,p}(\Gamma_f)}$. Although the term $\| \mathbf{E}_s
\|_{W^{1-1/p,p}(\Gamma_f)}$ can be directly related to $\| \phi_s
\|_{W^{2,p}(\Omega_s)}$ since the latter term is bounded in $\Omega_s$,
one can not estimate $\| \mathbf{E}_m \|_{W^{1-1/p,p}(\Gamma_f)}$
similarly by relating it with $\| \phi_m \|_{W^{2,p}(\Omega_{mf})}$
because $\phi_m$ contains singularities and hence is unbounded in
$\Omega_{mf}$. Instead we follow the procedure in the proof of (e,f)
in Lemma (\ref{lemma_green}) and eventually estimate this trace norm
of $\mathbf{E}$ in $\Omega_s^-$ which does not contain potential
singularities; the details are omitted due to similarity of these
two proofs.
\end{remark}
\begin{remark}
The surface force definition presented in Eq.(\ref{fs_def}) applies
only to the discontinuous dielectric model as adopted in this study.
In the continuous dielectric models, which are also widely used for
in the implicit solvent simulations, different surface force
definition will be derived \cite{Im_CPC98}. However, the analysis on the
electrostatic forces given in the below is also applicable to
general surface force function $f_s =
f_s(\mathbf{E}_s,\mathbf{E}_m,\phi)$, and might be simplified if
electrical field $\mathbf{E}$ is continuous, i.e., $\epsilon$ is
continuous on $\Gamma$.
\end{remark}

\begin{figure}[!ht]
\begin{center}
\includegraphics[width=7cm]{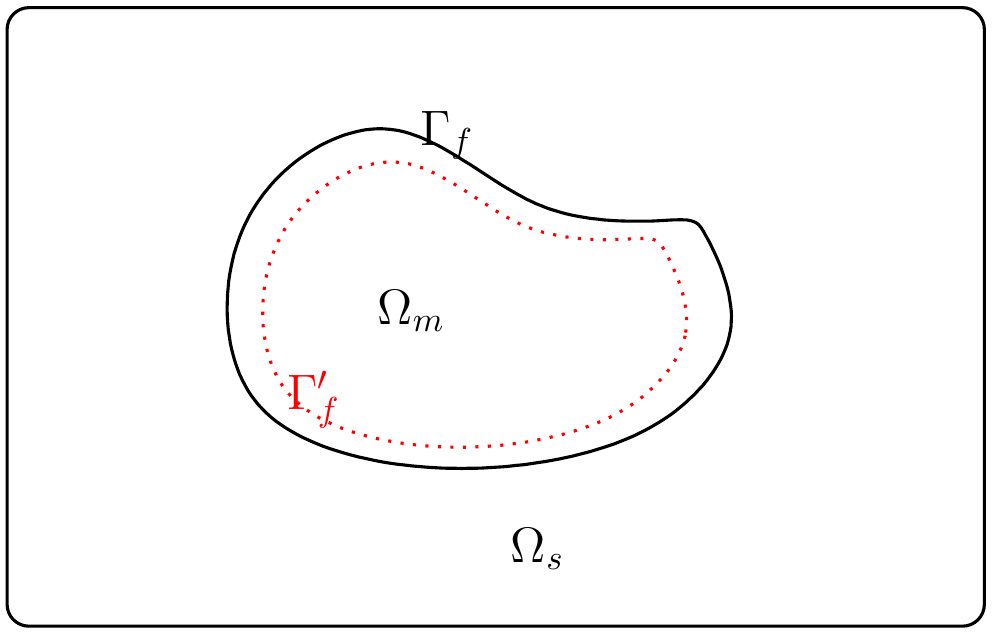}
\caption{Displacement of the molecular surface $\Gamma_f$. The solid
black line is the surface before displacement and the dashed red
line is the surface after displacement. The new solvent region
$\Omega_s'$ is $\Omega_s$ plus the strip between two surfaces; The
strip subtracted from $\Omega_m$ the equals the new solute region
$\Omega_m'$.} \label{interface_move}
\end{center}
\end{figure}

The electrostatic forces defined in Eq.(\ref{fb_def}) and
Eq.(\ref{fs_def}) are also subject to the Piola transformation.
Moreover these forces can not be directly supplied to the elasticity
equation; only the forces relative to a reference state can be
supplied. This is because a molecule is in an equilibrium state and
has no elastic deformation if the electrostatic potential is induced
only by the molecule itself and the solvent with physiological ionic
strength, in the absence of interactions with other molecules. We
refer to this state as the free state and use it as the reference
state. The {\it net} body force or the {\it net} surface force is
therefore defined to be the difference between that for a molecule
in non-free state and that for the same molecule in the free state.
To abuse the notation these differences are still referred to as the
body force and the surface force, and are denoted by $\mathbf{f}_b$
and $\mathbf{f}_s$ respectively:
\begin{eqnarray}
\mathbf{f}_b & := & \mathbf{f}_b - \mathbf{f}_{b0} \label{fb_def_2} \\
\mathbf{f}_s & := & \mathbf{f}_s - \mathbf{f}_{s0} \label{fs_def_2}
\end{eqnarray}
where $\mathbf{f}_{b0}$ and $\mathbf{f}_{s0}$ are the body force and
the surface force in the free state, and are constant vector fields for
any given macromolecule.

Physically, these two forces $\mathbf{f}_b$ and $\mathbf{f}_s$
shall be vanishing if there is no change of ionic strength and no additional molecules present,
and will be small for small change of ionic strength and weakly interacting additional molecules.
To reflect this physical reality and to facilitate the mathematical analysis,
we decompose (into four steps) the transition from the original single deformable molecule immersed in
aqueous solvent with physiological ionic strength to the final system with added rigid molecules,
varied ionic strength and deformed molecules.
In the first step, we change only the solvent from physiological ionic strength to the target strength,
and assume that the molecule $\Omega_{mf}$
does not have a conformational change although the {\it net} electrostatic force is not zero due to this
change of ionic strength. The electrostatic potential and forces at the end of the first perturbation
are denoted by $\phi_{1}$ and $\mathbf{f}_{b1}, \mathbf{f}_{s1}$, respectively. In the second step,
we alter the dielectric constant in the smooth domain $\Omega_{mr}$ from $\epsilon_s$ to $\epsilon_m$.
This low dielectric space represents the empty interior of the added molecules. The electrostatic potential
and forces after the second step are respectively denoted by $\phi_2$ and $\mathbf{f}_{b2}, \mathbf{f}_{s2}$.
In the third step we place the singular charges into $\Omega_{mr}$ and define the electrostatic
potential and forces to be $\phi_3$ and $\mathbf{f}_{b3}, \mathbf{f}_{s3}$.
In the last step we allow the Poisson-Boltzmann equation to couple with the elastic deformation
so that the system will arrive at the final state with electrostatic potential $\phi$ and forces $\mathbf{f}_b, \mathbf{f}_s$.
We write the {\it net} body force $\mathbf{f}_b$ and the {\it net} surface force as the summation of their four components
 \begin{eqnarray}
\mathbf{f}_b & = &  (\mathbf{f}_b - \mathbf{f}_{b3}) + (\mathbf{f}_{b3} - \mathbf{f}_{b2}) +
(\mathbf{f}_{b2} - \mathbf{f}_{b1}) + (\mathbf{f}_{b1} - \mathbf{f}_{b0} ) \label{fb_decomposition} \\
\mathbf{f}_s & = & (\mathbf{f}_s - \mathbf{f}_{s3}) + (\mathbf{f}_{s3} - \mathbf{f}_{s2}) +
(\mathbf{f}_{s2} - \mathbf{f}_{s1}) + (\mathbf{f}_{s1} - \mathbf{f}_{s0}) \label{fs_decomposition}
\end{eqnarray}
corresponding to the above decomposition, and estimate these components individually.

\subsection{The surface force due to changing ionic strength}
The electrostatic potential $\phi_0$ of the system in the free state is given by
\begin{eqnarray}
-\nabla \cdot (\epsilon \nabla \phi_0) + \kappa^2_0 \sinh(\phi_0) = \sum_{i}^{N_f} q_i \delta(x-x_i),  \label{Eq_phi0}
\end{eqnarray}
while the electrostatic potential $\phi_1$ after changing of the ionic strength satisfies
\begin{eqnarray}
-\nabla \cdot (\epsilon \nabla \phi_1) + \kappa^2 \sinh(\phi_1) = \sum_{i}^{N_f} q_i \delta(x-x_i). \label{Eq_phi1}
\end{eqnarray}
By subtracting Eq.(\ref{Eq_phi0}) from Eq.(\ref{Eq_phi1}) we get
\begin{eqnarray}
-\nabla \cdot (\epsilon \nabla \tilde{\phi}) + (\kappa^2 - \kappa^2_0) \cosh(\xi) \tilde{\phi} = (\kappa^2_0 - \kappa^2) \sinh(\phi_0) ~ \mbox{in} ~ \Omega, \label{Eq_phidiff1}
\end{eqnarray}
where $\tilde{\phi} = \phi_1 - \phi_0$ and $\xi(x) \in \left( \min \{\phi_1(x), \phi_0(x) \}, \max \{ \phi_1(x), \phi_0(x) \}  \right )$ is a function between
$\phi_1$ and $\phi_0$ satisfying the Cauchy mean value theorem
\begin{eqnarray*}
\sinh(\phi_1) = \sinh(\phi_0) + \cosh(\xi) (\phi_1 - \phi_0).
\end{eqnarray*}
We note that the singular charges disappear in Eq.(\ref{Eq_phidiff1}), and hence $\tilde{\phi} \in H^1(\Omega)$
and is also in $C^{\infty}$ in $\Omega_{mf}$ and $\Omega \setminus \overline{\Omega}_{mf}$. Moreover, following Theorem
(\ref{w2p_theorem}) we have the following $\mathcal{W}^{2,p}$ estimate for $\tilde{\phi}$:
\begin{eqnarray}
\| \tilde{\phi} \|_{\mathcal{W}^{2,p}(\Omega)} \le C \left( \| \tilde{\phi} \|_{L^p(\Omega)} + \| (\kappa^2_0 - \kappa^2) \sinh(\phi_0) \|_{L^p(\Omega)} + \| \tilde{G} \|_{L^p(\partial \Omega)}  \right), \label{phidiff1_estim_1}
\end{eqnarray}
where
\begin{eqnarray}
\tilde{G} = \sum_{i}^{N_f}  \frac{e^{-\kappa|x-x_i |} - e^{-\kappa_0 |x-x_i |}}{\epsilon_w |x - x_i |}  \approx
-(\kappa - \kappa_0)\sum_{i}^{N_f}  \frac{e^{-\kappa|x-x_i |}}{\epsilon_w} \label{gdiff_estim}
\end{eqnarray}
is the boundary condition of $\tilde{\phi}$ on $\partial \Omega$, and is the difference of boundary values of
$\phi_1$ and $\phi_0$. The approximation in Eq.(\ref{gdiff_estim}) is well defined for small
$(\kappa - \kappa_0)$. On the other hand, Lemma (\ref{linfty_lemma}) says that $\| \tilde{\phi} \|_{L^p(\Omega)}$ itself can
be estimate by
\begin{eqnarray}
 \| \tilde{\phi} \|_{L^p(\Omega)}  \le C \| \tilde{\phi} \|_{L^{\infty}(\Omega)}
\le C \left (  \| (\kappa^2_0 - \kappa^2) \sinh(\phi_0) \|_{L^2(\Omega)}
+ \| \tilde{G} \|_{H^{1/2}(\partial \Omega)}  \right ). \label{phidiff1_lp}
\end{eqnarray}
By combining Eqs. (\ref{phidiff1_estim_1}), (\ref{gdiff_estim}) and (\ref{phidiff1_lp}) we get
\begin{eqnarray}
\| \phi_1 - \phi_0 \|_{W^{2,p}(\Omega)} \le C |\kappa_0 - \kappa | \label{phidiff1_estim_2}
\end{eqnarray}
We now proceed to estimate the changes of electrostatic forces $\mathbf{f}_{b1} - \mathbf{f}_{b0},
\mathbf{f}_{s1} - \mathbf{f}_{s0}$. The body force change
\begin{eqnarray}
\| \mathbf{f}_{b1} - \mathbf{f}_{b0}\|_{L^p(\Omega_{mf})} & \le & C \sum_{i} | \tilde{\phi}(x_i) | \le C |\kappa_0 - \kappa |
\label{fb_diff_1}
\end{eqnarray}
follows from Lemma (\ref{lemma_fb_lp}). On the other hand,
\begin{eqnarray*}
\mathbf{f}_{s1} - \mathbf{f}_{s0} & = & - \frac{1}{2} \epsilon_s ( |\nabla \phi_{1s}|^2 - |\nabla \phi_{0s}|^2) \mathbf{n}  + \frac{1}{2} \epsilon_m ( |\nabla \phi_{1m}|^2 - |\nabla \phi_{0m}|^2) \mathbf{n} \nonumber \\
 & & - ~ \left (\kappa^2 (\cosh(\phi_{1s})-1) - \kappa^2_0 (\cosh(\phi_{0s})-1) \right) \mathbf{n}
 \nonumber \\
 & = & -\frac{1}{2} \epsilon_s (\nabla \phi_{1s} - \nabla \phi_{0s}) \cdot (\nabla \phi_{1s} + \nabla \phi_{0s}) \mathbf{n} \nonumber \\
& & + \frac{1}{2} \epsilon_m (\nabla \phi_{1m} - \nabla \phi_{0m}) \cdot (\nabla \phi_{1m} + \nabla \phi_{0m}) \mathbf{n}
 \nonumber \\
 & &  -~ \kappa^2 (\cosh(\phi_{1s}) - \cosh(\phi_{0s})) \mathbf{n} 
\nonumber \\
& & -~(\kappa^2 - \kappa^2_0) \cosh(\phi_{0s}) \mathbf{n}
\nonumber \\
& & -~ (\kappa^2 - \kappa_0^2).
\end{eqnarray*}
We note that with the mean value theorem, $\kappa^2 (\cosh(\phi_{1s}) - \kappa^2 \cosh(\phi_{0s}))$ can be related to the change of ionic strength as
\begin{eqnarray*}
\kappa^2 (\cosh(\phi_{1s}) - \cosh(\phi_{0s})) = \kappa^2 \sinh(\xi')(\phi_{1s} - \phi_{0s}).
\end{eqnarray*}
Moreover, we can not bound the trace norm of $ |\nabla \phi_{1m}|^2 - |\nabla \phi_{0m}|^2$ by its Sobolev norm
in subdomain $\Omega_{mf}$ where the singularities of the potential are located. Instead we follow the remark of Eq. (\ref{fs_def})
and estimate this term in subdomain $\Omega_s^-$. Thus the surface force change in the first perturbation step
can be estimated as
\begin{eqnarray}
\| \mathbf{f}_{s1} - \mathbf{f}_{s0} \|_{W^{1-1/p,p}(\Gamma_{f})} & \le &
C \Big ( \| \phi_{1} -  \phi_{0} \|_{W^{2,p}(\Omega_{s})} \|  \phi_{1} +  \phi_{0} \|_{W^{2,p}(\Omega_{s})}
\nonumber \\
 & & \quad + \|  \phi_{1} - \phi_{0} \|_{W^{2,p}(\Omega_s^-)}
                \|  \phi_{1} +  \phi_{0} \|_{W^{2,p}(\Omega_s^-)}  \nonumber \\
 & &  \quad + ~ \kappa^2 | \sinh(\xi') | \| \phi_{1} - \phi_{0}  \|_{W^{1,p}(\Omega_{s})}  \nonumber \\
& &  \quad + | \kappa^2 - \kappa^2_0| \|  \phi_{0s}  \|_{W^{1,p}(\Omega_{s})} + | \kappa^2 - \kappa^2_0| \Big ) \nonumber \\
& \le & C | \kappa - \kappa_0 | \Big ( \| \phi_{1} + \phi_{0}\|_{W^{1,p}(\Omega_{s})}  +
\| \phi_{1} + \phi_{0}\|_{W^{1,p}(\Omega_s^-)}  \nonumber \\
& &  \quad +~ \kappa^2 | \sinh(\xi') | +  ( \kappa + \kappa_0) ( \|  \phi_{0}  \|_{W^{1,p}(\Omega_{s})} + 1) \Big ) \nonumber \\
 & \le & C_s(\kappa) | \kappa - \kappa_0 |, \label{fs_diff_1}
\end{eqnarray}
where Lemma (\ref{w1p_algebra}) is applied to estimate the norm of the products of
two $W^{1,p}$ functions $\nabla \phi_{1} - \nabla \phi_{0}$ and $\nabla \phi_{1} + \nabla \phi_{0} $.

\subsection{The surface force due to adding a low dielectric constant cavity}
Although the variation of ionic strength will change the electrostatic potential of the
system, the magnitude of potential change is usually smaller than that induced by adding molecules
to the system. By adding molecules to the system we will not only have the additional singular charges
but also expand a cavity of low dielectric constant in the solvent. These two effects will be
considered separately, and this subsection estimates only the change of potential and forces due to the
additional cavity of low dielectric constant. The effect of added charges will be analyzed in the next subsection.

\begin{figure}[!ht] \label{fig_addspace}
\begin{center}
\includegraphics[width=12cm]{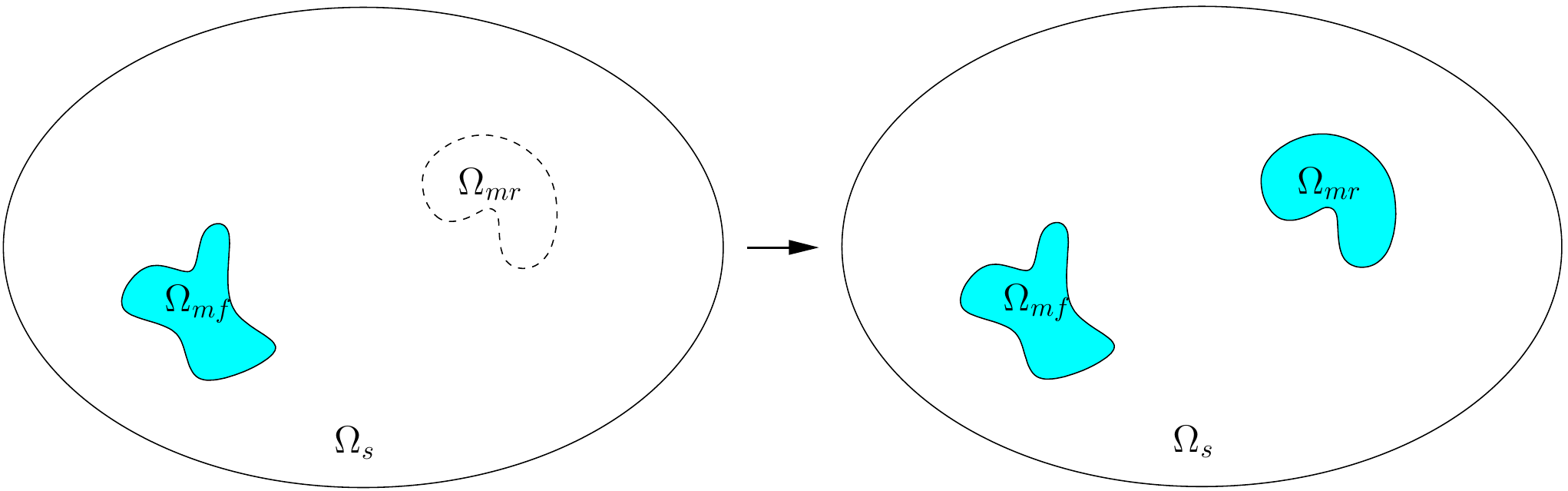}
\caption{Illustration of adding rigid molecule(s). Left: Before adding rigid molecules the domain $\Omega_{mr}$ is
occupied by solvent and hence has dielectric constant $\epsilon_s$. Right: After adding molecules the
domain $\Omega_{mr}$ has low dielectric constant $\epsilon_m$.}
\end{center}
\end{figure}

The electrostatic potential $\phi_2$ with an additional low dielectric cavity in the domain is described by
\begin{eqnarray}
-\nabla \cdot (\epsilon \nabla \phi_2) + \kappa^2 \sinh(\phi_2) = \sum_{i}^{N_f} q_i \delta(x-x_i) \label{Eq_phi2}
\end{eqnarray}
with the same boundary conditions as Eq.(\ref{Eq_phi1}). Here
the dielectric constant $\epsilon$ and ionic strength $\kappa$ are different from those in Eq.(\ref{Eq_phi1}),
and thus the subtraction of Eq.(\ref{Eq_phi1}) from
Eq.(\ref{Eq_phi2}) shall be individually conducted in $\Omega_{mf}, \Omega_{mr}$ and $\Omega_s$ to give the
following three equations:
\begin{eqnarray*}
-\nabla \cdot (\epsilon_m \nabla (\phi_2 - \phi_1)) & = & 0  ~ \mbox{in}~\Omega_{mf}, \\
-\nabla \cdot (\epsilon_s \nabla (\phi_2 - \phi_1)) + \kappa^2 (\sinh(\phi_2) - \sinh(\phi_1) & = & 0  ~ \mbox{in}~\Omega_{s},
\\
-\nabla \cdot (\epsilon_m \nabla \phi_2) + \nabla (\epsilon_s \nabla \phi_1) - \kappa^2 \sinh(\phi_1) & = & 0
~ \mbox{in}~\Omega_{mr}.
\end{eqnarray*}
By assembling these three equations we get a complete equation for
$\tilde{\phi} = \phi_2 - \phi_1$ in $\Omega$:
\begin{eqnarray}
-\nabla \cdot (\epsilon \nabla \tilde{\phi} ) + \kappa^2 (\sinh(\phi_2) - \sinh(\phi_1)) & = &
\frac{\epsilon_m}{\epsilon_s} \kappa^2 \sinh(\phi_1)  ,
\label{phidiff2_1}
\end{eqnarray}
where $\epsilon$ is the same as that in Eq.(\ref{Eq_phi2}), and the right-hand side is vanishing in $\Omega_s$ and $\Omega_{mf}$. This function (non-vanishing only in $\Omega_{mr}$) is equivalent to $- \nabla \cdot ((\epsilon_s - \epsilon_m) \nabla \phi_1) + \kappa^2 \sinh(\phi_1)$ since
\begin{eqnarray*}
-\nabla \cdot (\epsilon_s \nabla \phi_1) + \kappa^2 \sinh(\phi_1) & = & 0 ~\mbox{in}~ \Omega_{mr}.
\end{eqnarray*}
As before we notice that $\kappa^2 (\sinh(\phi_2) - \sinh(\phi_1))$ can be related to $\cosh(\xi) \tilde{\phi}$ with
a smooth function $\xi$ bounded by $\phi_1$ and $\phi_2$, and therefore $\tilde{\phi}$ in Eq.(\ref{phidiff2_1}) satisfies an estimate of the form
\begin{eqnarray*}
\| \tilde{\phi} \|_{\mathcal{W}^{2,p}(\Omega)}   & \le &  C \left ( \| \tilde{\phi} \|_{L^{p}(\Omega)} +
\|\frac{\epsilon_m}{\epsilon_s} \kappa^2 \sinh(\phi_1) \|_{L^{p}(\Omega_{mr})} \right )
\\
& \le & C  \| \sinh(\phi_1) \|_{L^{p}(\Omega_{mr})} \nonumber \\
& \le & C \| \sinh(\phi_1) \|_{L^{\infty}(\Omega)} \cdot V_{mr}
\end{eqnarray*}
which follows from Theorem (\ref{reg_LPBeq_theorem}) and Lemma (\ref{linfty_lemma}), considering that Eq.(\ref{Eq_phi2})
has a vanishing boundary condition. Here $\displaystyle{ V_{mr} }$ is the volume of $\Omega_{mr}$, suggesting
that $\displaystyle{ \| \tilde{\phi} \|_{\mathcal{W}^{2,p}(\Omega)} }$ can be made arbitrarily small by reducing
the volume of $\Omega_{mr}$.

The change electrostatic body force induced by this additional low dielectric cavity can be estimated as
\begin{eqnarray}
\| \mathbf{f}_{b2} - \mathbf{f}_{b1}\|_{L^p(\Omega_{mf})} & \le & C \sum_{i} | \tilde{\phi}(x_i) | \le C V_{mr} \label{fb_diff_2}.
\end{eqnarray}
The surface force change is
\begin{eqnarray*}
\mathbf{f}_{s2} - \mathbf{f}_{s1} & = &  - \frac{1}{2} \epsilon_s ( |\nabla \phi_{2s}|^2 - |\nabla \phi_{1s}|^2) \mathbf{n}
+ \frac{1}{2} \epsilon_m ( |\nabla \phi_{2m}|^2 - |\nabla \phi_{1m}|^2) \mathbf{n} \nonumber \\
& & - \kappa^2 (\cosh(\phi_{2s}) - \cosh(\phi_{1s}) \mathbf{n}
 \nonumber \\
 & = & - \frac{1}{2} \epsilon_s (\nabla \phi_{2s} - \nabla \phi_{1s}) \cdot (\nabla \phi_{2s} + \nabla \phi_{1s}) \mathbf{n} \nonumber \\
& &  + \frac{1}{2} \epsilon_m (\nabla \phi_{2m} - \nabla \phi_{1m}) \cdot (\nabla \phi_{2m} + \nabla \phi_{1m}) \mathbf{n} \nonumber \\
     & &  - \kappa^2 (\cosh(\phi_{2s}) - \cosh(\phi_{1s}))
     \mathbf{n},
\end{eqnarray*}
and thus can be estimated by
\begin{eqnarray}
\| \mathbf{f}_{s2} - \mathbf{f}_{s1} \|_{W^{1-1/p,p}(\Gamma_{f})} & \le &
C \Big ( \| \phi_{2} - \phi_{1} \|_{W^{1,p}(\Omega_{s})}  \| \phi_{2} + \phi_{1} \|_{W^{1,p}(\Omega_{s})}  \nonumber \\
& & + \| \phi_{2} - \phi_{1} \|_{W^{1,p}(\Omega_s^-)}  \| \phi_{2} + \phi_{1} \|_{W^{1,p}(\Omega_s^-)}
\nonumber \\
& &  + ~ \kappa^2 | \sinh(\xi') | \| \phi_{2} - \phi_{1}  \|_{W^{1,p}(\Omega_{s})} \Big ) \nonumber \\
& \le & C \| \phi_2 - \phi_1 \|_{\mathcal{W}^{2,p}(\Omega)} \le C \cdot V_{mr}, \label{fs_diff_2}
\end{eqnarray}
following from the similar arguments in last subsection for estimating $\mathbf{f}_{s1} - \mathbf{f}_{s0}$.

\subsection{The surface force due to additional singular charges}
In this subsection we will consider the change of electrostatic potential and force caused by
singular charges placed in the low dielectric space $\Omega_{mr}$. The low dielectric space $\Omega_{mr}$
with these charges completely models the rigid molecule which is expected to interact
with a flexible molecule $\Omega_{mf}$. The electrostatic potential field after this third perturbation
step satisfies the following equation
\begin{eqnarray}
-\nabla \cdot (\epsilon \nabla \phi_3) + \kappa^2 \sinh(\phi_3) = \sum_{i}^{N_f} q_i \delta(x_i) +
\sum_{j}^{N_r} q_j \delta(x_j), \label{Eq_phi3}
\end{eqnarray}
while the change of potential, $\tilde{\phi} =\phi_3 - \phi_2$ is the solution of the equation
\begin{eqnarray}
-\nabla \cdot (\epsilon \nabla \tilde{\phi}) + \kappa^2 \cosh(\xi) \tilde{\phi} =  \sum_{j}^{N_r} q_j \delta(x_j), \label{phidiff3_1}
\end{eqnarray}
which is obtained by subtracting Eq.(\ref{Eq_phi2}) from Eq.(\ref{Eq_phi3}). Here $\xi(x)$ is a smooth function
defined by the mean value expansion $\sinh(\phi_3) = \sinh(\phi_2) + \cosh(\xi) (\phi_3 - \phi_2)$. To facilitate the
regularity analysis of $\tilde{\phi}$ we define its singular component $\tilde{G}$, which solves
\begin{eqnarray}
-\nabla \cdot (\epsilon_m \nabla \tilde{G}) = \sum_{j}^{N_r} q_j \delta(x_j) \label{diff3_G}
\end{eqnarray}
and its regular component $\tilde{\phi}^r$, which is the solution of
\begin{eqnarray}
-\nabla \cdot (\epsilon \nabla \tilde{\phi}^r ) + \kappa^2 \cosh(\xi) \tilde{\phi}^r =
\nabla \cdot ((\epsilon - \epsilon_m) \nabla \tilde{G} ) - \kappa^2 \cosh(\xi) \tilde{G}.
\end{eqnarray}
It shall be noted that $\nabla \cdot ((\epsilon - \epsilon_m) \nabla \tilde{G} )$ is nonzero only on
the molecular surfaces $\Gamma_f$ and $\Gamma_r$, and can be represented as an interface condition
$(\epsilon_s -\epsilon_m) \nabla \tilde{G} \cdot \mathbf{n}$ on each of these two molecular surfaces
similar to that in Eq.(\ref{fG_def2}). We notice that
\begin{eqnarray}
\tilde{G}(x) = \sum_{j}^{N_r} \frac{q_j}{\epsilon_m |x-x_j |} \label{phi3_G}
\end{eqnarray}
and is of $C^{\infty}$ wherever away from any of $x_j$, hence of $C^{\infty}(\overline{\Omega}_s)$,
and thus so is $-(\epsilon_s -\epsilon_m) \nabla \tilde{G} \cdot \mathbf{n}$ on $\Gamma_f$ and $\Gamma_r$.
The $\mathcal{W}^{2,p}$ estimate of $\tilde{\phi}$ in $\Omega_s$ says that
\begin{eqnarray}
\| \tilde{\phi} \|_{\mathcal{W}^{2,p}(\Omega)} & \le & C \left( \kappa^2 \| \cosh(\xi) \tilde{G} \|_{L^p(\Omega_s)}
+ \| (\epsilon_s -\epsilon_m) \nabla \tilde{G} \|_{W^{1-1/p,p}(\Gamma_f)} \right . \nonumber \\
& & \left . + \| (\epsilon_s -\epsilon_m) \nabla \tilde{G} \|_{W^{1-1/p,p}(\Gamma_r)}
+ \| g \|_{W^{1-1/p,p}(\partial \Omega)} + \| \tilde{\phi} \|_{L^{p}(\Omega)}  \right)  \nonumber \\
& \le & C \left(  \| \tilde{G} \|_{L^p}  + \| \tilde{G} \|_{W^{2,p}(\Omega_s)} +
\| g \|_{W^{1,p}(\Omega_s)}  \right)
\rightarrow 0 ~\mbox{as}~q_j\rightarrow 0 \label{phidiff3_2}
\end{eqnarray}
where
$$ g = \sum_{j}^{N_r} q_j \frac{e^{-k|x-x_j|}}{\epsilon_m |x-x_j |}
~\mbox{on}~\partial \Omega $$
is the boundary condition of Eq.(\ref{phidiff3_1}), and is the difference of boundary conditions of Eq.(\ref{Eq_phi2})
and Eq.(\ref{Eq_phi3}).

We now analyze the change of the electrostatic forces due to the inclusion of additional singular charges.
For body force we have
\begin{eqnarray}
\| \mathbf{f}_{b3} - \mathbf{f}_{b2}\|_{L^p(\Omega_{mf})} & \le & C \sum_{i}
| \tilde{\phi}(x_i) + \sum_{j \ne i} \tilde{G}_j(xi)  | \rightarrow 0 ~\mbox{as}~q_j\rightarrow 0, \label{fb_diff_3}
\end{eqnarray}
and for the surface force we know
\begin{eqnarray*}
\mathbf{f}_{s3} - \mathbf{f}_{s2} & = & - \frac{1}{2} \epsilon_s ( |\nabla \phi_{3s}|^2 - |\nabla \phi_{2s}|^2) \mathbf{n}
+ \frac{1}{2} \epsilon_m ( |\nabla \phi_{3m}|^2 - |\nabla \phi_{2m}|^2) \mathbf{n}  \nonumber \\
& & - (\kappa^2 \cosh(\phi_{3s}) - \kappa^2 \cosh(\phi_{2s}) \mathbf{n} \nonumber \\
 & = & - \frac{1}{2} \epsilon_s (\nabla \phi_{3s} - \nabla \phi_{2s}) \cdot (\nabla \phi_{3s} + \nabla \phi_{2s}) \mathbf{n} \nonumber \\
& & + \frac{1}{2} \epsilon_m (\nabla \phi_{3m} - \nabla \phi_{2m}) \cdot (\nabla \phi_{3m} + \nabla \phi_{2m}) \mathbf{n}
\nonumber \\
 & &  - \kappa^2 (\cosh(\phi_{3s}) - \cosh(\phi_{2s})) \mathbf{n}.
\end{eqnarray*}
This surface force difference can then be estimated by
\begin{eqnarray}
\| \mathbf{f}_{s3} - \mathbf{f}_{s2} \|_{W^{1-1/p,p}(\Gamma_{f})} &\le &
C \Big ( \| \phi_{3s} -  \phi_{2s} \|_{W^{2,p}(\Omega_{s})} \| \phi_{3s} +  \phi_{2s} \|_{W^{2,p}(\Omega_{s})}  \nonumber \\
& &  + \| \phi_{3m} -  \phi_{2m} \|_{W^{2,p}(\Omega_{s}^-)} \| \phi_{3m} + \phi_{2m} \|_{W^{2,p}(\Omega_{s}^-)}
\nonumber \\
 & & + \kappa^2 | \sinh(\xi') | \| \phi_{3s} - \phi_{2s}  \|_{W^{2,p}(\Omega_{s})} \Big ) \nonumber \\
& \le & C  \Big ( \| \phi_{3} -  \phi_{2}\|_{\mathcal{W}^{2,p}(\Omega_{s})} +
\| \phi_{3} -  \phi_{2}\|_{\mathcal{W}^{2,p}(\Omega_{s}')} \Big )
\rightarrow 0 \nonumber \\
& & \mbox{as}~q_j\rightarrow 0 \label{fs_diff_3}
\end{eqnarray}
where the constant $C$ depends on the $\mathcal{W}^{2,p}$ norm of $\phi_2,\phi_3$, and therefore is bounded
if $\phi_2,\phi_3$ are bounded.

\subsection{The surface force due to molecular conformational change}
We now consider the change of electrostatic potential and surface
forces induced by elastic displacement. By subtracting
Eq.(\ref{Eq_phi3}) from Eq.(\ref{PBE_2}) and with a few algebraic
manipulations we get the governing equation for $\tilde{\phi} = \phi
- \phi_3$:
\begin{eqnarray}
-\nabla \cdot (\epsilon \mathbf{F} \nabla \tilde{\phi}) + J \kappa^2 \cosh(\xi) \tilde{\phi}
& = & (J-1)\sum_{i}^{N_f+N_r} q_i \delta(x_i) + \nabla \cdot (\epsilon (\mathbf{F} -\mathrm{I}) \nabla \phi_3)
\nonumber \\
& &  + (J-1) \kappa^2 \sinh(\phi_3), \label{Eq_phidiff4}
\end{eqnarray}
where the function $\xi$ is defined by use of the mean value expansion
$\sinh(\phi) = \sinh(\phi_3) + \cosh(\xi) (\phi - \phi_3)$. Unlike its counterparts in the analysis for the
first two steps, this function $\xi$ is not piecewise smooth since $\phi^r$ of Eq.(\ref{PBE_2}) belongs to $\mathcal{W}^{2,p}(\Omega)$
hence is only piecewise uniformly differentiable. The resulting mean value function $\xi$ is therefore a piecewise uniformly
continuous function. Because of the appearance of
remaining singular charges in the right hand side of Eq.(\ref{Eq_phidiff4}), we know that $\tilde{\phi}$ is
not in $H^1$ globally. Again we employ the decomposition $\tilde{\phi} = \tilde{G}_f + \tilde{G}_r +
\tilde{\phi}^r$ to separate the singular components $\tilde{G}_f, \tilde{G}_r$ and the regular component $\tilde{\phi}^r$. The first singular component $\displaystyle{ \tilde{G}_f= \sum_{j}^{N_f} \tilde{G}_{fj} }$ is induced by all $N_f$ singular charges in $\Omega_{mf}$
\begin{eqnarray}
-\nabla \cdot (\epsilon_m \mathbf{F} \nabla \tilde{G}_f) & = & (J-1)\sum_{i}^{N_f} q_i \delta(x_i),
\end{eqnarray}
while the second singular component $\displaystyle{ \tilde{G}_r = \sum_{j}^{N_r} \tilde{G}_{rj} } $ is caused by all $N_r$ singular
charges in $\Omega_{mr}$
\begin{eqnarray}
-\nabla \cdot (\epsilon_m \mathbf{F} \nabla \tilde{G}_r) & = & (J-1)\sum_{j}^{N_r} q_j \delta(x_j),
\end{eqnarray}
and both singular components have estimates similar to Eqs.(\ref{green_estim}) and (\ref{green_grad_estim})
\begin{eqnarray}
\| \tilde{G}_f \|_{L^{\infty}(\Omega_s)}  \le
 \frac{ \|J-1\|_{L^{\infty}(\Omega)}  N_f K q_{max}}{\delta_f} ~\mbox{in}~\overline{\Omega}_s, \label{green_estim_2} \\
\| \tilde{G}_r \|_{L^{\infty}(\Omega_s)}  \le
 \frac{\|J-1\|_{L^{\infty}(\Omega)} N_r K q_{max}}{\delta_r} ~\mbox{in}~\overline{\Omega}_s, \label{green_estim_3} \\
\| \nabla \tilde{G}_f \|_{L^{\infty}(\Omega_s)} \le
\frac{\|J-1\|_{L^{\infty}(\Omega)} N_f K q_{max}}{\delta_f^2} ~\mbox{in}~\overline{\Omega}_s, \label{green_grad_estim_4} \\
\| \nabla \tilde{G}_r \|_{L^{\infty}(\Omega_s)} \le
\frac{\|J-1\|_{L^{\infty}(\Omega)} N_r K q_{max}}{\delta_r^2} ~\mbox{in}~\overline{\Omega}_s. \label{green_grad_estim_5}
\end{eqnarray}
By subtracting the singular components $\tilde{G}_f,\tilde{G}_r$ from Eq.(\ref{Eq_phidiff4}) we obtain an equation for the
regular component
\begin{eqnarray}
-\nabla \cdot (\epsilon \mathbf{F} \nabla \tilde{\phi}^r) + J \kappa^2 \cosh(\xi) \tilde{\phi}^r
& = & \nabla \cdot (\epsilon (\mathbf{F} -\mathrm{I}) \nabla \phi_3) + (J-1) \kappa^2 \sinh(\phi_3) \nonumber \\
&  & - \nabla \cdot ((\epsilon -\epsilon_m) \mathbf{F} \nabla \tilde{G}_f) \nonumber \\
& & - \nabla \cdot ((\epsilon -\epsilon_m) \mathbf{F} \nabla \tilde{G}_r), \label{Eq_phirdiff4}
\end{eqnarray}
where the last two items $\nabla \cdot ((\epsilon -\epsilon_m) \mathbf{F} \nabla \tilde{G}_f)$ and
$\nabla \cdot ((\epsilon -\epsilon_m) \mathbf{F} \nabla \tilde{G}_r)$
prescribe two interface conditions on the molecular surfaces $\Gamma_f$ and $\Gamma_r$:
\begin{eqnarray}
f_{G_f} & = & (\epsilon_s - \epsilon_m) \mathbf{F} \nabla \tilde{G}_f \cdot \mathbf{n},
\label{fGf} \\
f_{G_r} & = & (\epsilon_s - \epsilon_m) \mathbf{F} \nabla \tilde{G}_r \cdot \mathbf{n},
\label{fGr}
\end{eqnarray}
similar to that defined in Eq.(\ref{fG_def2}). For the regular component $\tilde{\phi}^r$,
Theorem (\ref{reg_LPBeq_theorem}) states that it can be estimated with respect to the $\mathcal{W}^{2,p}$ norm as follows
\begin{eqnarray}
\| \tilde{\phi}^r \|_{\mathcal{W}^{2,p}(\Omega)} & \le & C \left( \| \tilde{\phi}^r \|_{L^p(\Omega)} + \| f_{G_f} \|_{W^{1-1/p,p}(\Gamma_f)} + \| f_{G_r} \|_{W^{1-1/p,p}(\Gamma_f)} \right . \nonumber \\
& & +~ \| f_{G_f} \|_{W^{1-1/p,p}(\Gamma_r)} +~\| f_{G_r} \|_{W^{1-1/p,p}(\Gamma_r)} \nonumber  \\
& & +~ \left. \| (J-1) \kappa^2 \sinh(\phi_3) \|_{L^p(\Omega)}
 + \| \nabla \cdot (\epsilon (\mathbf{F} -\mathrm{I}) \nabla \phi_3) \|_{L^p(\Omega)} \right) \nonumber \\
 & \le & C \left( \| f_{G_f} \|_{W^{1-1/p,p}(\Gamma_f)} + \| f_{G_r} \|_{W^{1-1/p,p}(\Gamma_f)} \right .
\nonumber  \\
& & +~ \| f_{G_f} \|_{W^{1-1/p,p}(\Gamma_r)} + \| f_{G_r} \|_{W^{1-1/p,p}(\Gamma_r)}  \nonumber \\
& & +~ \left. \| (J-1) \sinh(\phi_3) \|_{L^p(\Omega)} +
 \| \mathbf{F}- \mathrm{I} \|_{W^{1,p}(\Omega)} \| \phi_3 \|_{W^{2,p}(\Omega)} \right) \nonumber \\
 & \le &  C\left( \|J-1 \|_{W^{1,p}(\Omega_s)} \| \mathbf{F} \|_{W^{1,p}(\Omega_s)} \right.
\\
& & +~ \|J-1\|_{L^{\infty}(\Omega)} \| \sinh(\phi_3) \|_{L^p(\Omega)} \nonumber \\
 & & +~ \left.
 \| \mathbf{F}-\mathrm{I} \|_{W^{1,p}(\Omega)} \| \phi_3 \|_{\mathcal{W}^{2,p}(\Omega) } \right ) \nonumber \\
& = & C(\| J-1 \|_{W^{1,p}(\Omega_s)} + \| J-1\|_{L^{\infty}(\Omega_s)} + \| \mathbf{F} - \mathrm{I} \|_{W^{1,p}(\Omega)}),
\label{phidiff4_estim}
\end{eqnarray}
where in the last inequality we applied the estimate in Eq.(\ref{fG_estimate}) for the interface conditions
$f_{G_f}$ and $f_{G_r}$. Finally, we estimate the change of the electrostatic forces due to the elastic deformation.
By definition, the body force change is attributed to the variation of regular
component(reaction field) $\tilde{\phi}^r$ and the variations of the the singular components(Coulomb potential field),
and thus can be estimated as:
\begin{eqnarray}
\hspace*{-0.7cm}
\| \mathbf{f}_{b} - \mathbf{f}_{b3}\|_{L^p(\Omega_{mf})} & \le & C \sum_{i}
| \tilde{\phi}(x_i) + \sum_{j \ne i} \tilde{G}_{fj}(x_i) + \sum_{j} \tilde{G}_{rj}(x_i) |  \nonumber \\
& \le & C(\| J-1 \|_{W^{1,p}(\Omega_s)} + \| J-1\|_{L^{\infty}(\Omega_s)} + \| \mathbf{F} - \mathrm{I} \|_{W^{1,p}(\Omega)}).
\label{fb_diff_4}
\end{eqnarray}
The surface force change in this step is defined to be
\begin{eqnarray}
\mathbf{f}_s - \mathbf{f}_{s3} & = & - \frac{1}{2} \epsilon_s ( |\mathbf{F} \nabla \phi_{s}|^2 - |\nabla \phi_{3s}|^2) \mathbf{n}
+ \frac{1}{2} \epsilon_m ( |\mathbf{F} \nabla \phi_{m}|^2 - |\nabla \phi_{3m}|^2) \mathbf{n} \nonumber \\
 & & - \kappa^2 (\cosh(\phi_{s}) - \kappa^2 \cosh(\phi_{3s}) \mathbf{n}
 \nonumber \\
 & = & - \frac{1}{2} \epsilon_s (\mathbf{F} \nabla \phi_{s} - \nabla \phi_{3s}) \cdot (\mathbf{F} \nabla \phi_{s} + \nabla \phi_{3s}) \mathbf{n} \nonumber \\
& & + \frac{1}{2} \epsilon_m (\mathbf{F} \nabla \phi_{m} - \nabla \phi_{3m}) \cdot (\mathbf{F} \nabla \phi_{m} + \nabla \phi_{3m}) \mathbf{n}
\nonumber \\
 & &  - \kappa^2 \sinh(\xi') (\phi_{s} - \phi_{3s}) \mathbf{n}. \label{f_diff4_1}
\end{eqnarray}
It follows that
\begin{eqnarray}
\| \mathbf{f}_s - \mathbf{f}_{s3} \|_{W^{1-1/p,p}(\Gamma_{f})} & \le &
C \Big ( \| \mathbf{F} \nabla \phi - \nabla \phi_3 \|_{W^{1,p}(\Omega_s) } +
\| \mathbf{F} \nabla \phi - \nabla \phi_3 \|_{W^{1,p}(\Omega_s') } \nonumber  \\
&  & + \| \phi - \phi_3 \|_{W^{1,p}(\Omega_s)} \Big). \label{f_diff4_2}
\end{eqnarray}
To relate the estimate of $\mathbf{F} \nabla \phi - \nabla \phi_3$ to that of $ \phi - \phi_3 $ (the latter has already been estimated in Eq.(\ref{phidiff4_estim})), we make use of the relation
\begin{eqnarray*}
\| \mathbf{F} \nabla \phi - \nabla \phi_3 \|_{W^{1,p}(\Omega_s)} & = &
\| \mathbf{F} \nabla \phi - \mathbf{F} \nabla \phi_3  +  \mathbf{F} \nabla \phi_3 - \nabla \phi_3 \|_{W^{1,p}(\Omega_s)}   \nonumber \\
& \le & \| \mathbf{F} (\nabla \phi - \nabla \phi_3)\|_{W^{1,p}(\Omega_s)} +
\| (\mathbf{F}-\mathrm{I})\nabla \phi_3  \|_{W^{1,p}(\Omega_s)} \nonumber \\
& \le & \| \mathbf{F}\|_{W^{1,p}(\Omega_s)} \|(\nabla \phi - \nabla \phi_3)\|_{W^{1,p}(\Omega_s)} +
\nonumber \\
&  & \| (\mathbf{F}-\mathrm{I}) \|_{W^{1,p}(\Omega_s)} \| \phi_3  \|_{W^{2,p}(\Omega_s)}
\end{eqnarray*}
and a similar relation for $\| \mathbf{F} \nabla \phi - \nabla \phi_3 \|_{W^{1,p}(\Omega_s')}$. By collecting these results together
we can conclude from Eq.(\ref{f_diff4_2}) that
\begin{eqnarray}
\| \mathbf{f}_s - \mathbf{f}_{s3} \|_{W^{1-1/p,p}(\Gamma_{f})} & \le &
C \Big (  \| J - 1\|_{W^{1,p}(\Omega)} + \| \mathbf{F} - \mathrm{I} \|_{W^{1,p}(\Omega)} \Big ), \label{fs_diff_4}
\end{eqnarray}
which indicates the dependence and the boundedness of this electrostatic force component with respect to
the elastic displacement field.
\subsection{Complete estimation of the electrostatic forces}
The complete estimation of the electrostatic surface force is presented this lemma:
\begin{lemma}
The electrostatic force can be made arbitrary small by reducing the variations of ionic strength, the
volume of the additional low dielectric space, the added singular charge and the magnitude
of the elastic deformation.
\end{lemma}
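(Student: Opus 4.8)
The plan is to assemble the desired smallness from the four component estimates already established, using the telescoping decompositions (\ref{fb_decomposition}) and (\ref{fs_decomposition}) together with the triangle inequality. First I would apply the triangle inequality to the body-force decomposition,
\begin{eqnarray*}
\| \mathbf{f}_b \|_{L^p(\Omega_{mf})} &\le&
\| \mathbf{f}_b - \mathbf{f}_{b3} \|_{L^p(\Omega_{mf})}
+ \| \mathbf{f}_{b3} - \mathbf{f}_{b2} \|_{L^p(\Omega_{mf})} \\
& & +~ \| \mathbf{f}_{b2} - \mathbf{f}_{b1} \|_{L^p(\Omega_{mf})}
+ \| \mathbf{f}_{b1} - \mathbf{f}_{b0} \|_{L^p(\Omega_{mf})},
\end{eqnarray*}
and the analogous inequality for $\| \mathbf{f}_s \|_{W^{1-1/p,p}(\Gamma_f)}$ coming from (\ref{fs_decomposition}). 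Each summand on the right is then controlled by its corresponding component estimate: (\ref{fb_diff_1}) and (\ref{fs_diff_1}) for the ionic-strength step, (\ref{fb_diff_2}) and (\ref{fs_diff_2}) for the low-dielectric cavity, (\ref{fb_diff_3}) and (\ref{fs_diff_3}) for the added charges, and (\ref{fb_diff_4}) and (\ref{fs_diff_4}) for the conformational change. Collecting these yields a bound of the form
$$ \| \mathbf{f}_b \|_{L^p(\Omega_{mf})} \le C \big( |\kappa - \kappa_0| + V_{mr} + R_q + D \big), $$
where $R_q \to 0$ as $q_j \to 0$ by (\ref{fb_diff_3}), and $D := \| J-1 \|_{W^{1,p}(\Omega_s)} + \| J-1 \|_{L^{\infty}(\Omega_s)} + \| \mathbf{F} - \mathrm{I} \|_{W^{1,p}(\Omega)}$ is the deformation-dependent quantity appearing in (\ref{fb_diff_4}); the surface force obeys a bound of the same structure.

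The one point requiring care is the deformation term $D$. I would close this by recalling from Lemma~(\ref{piola}) and the remark following it that $\mathbf{F}(\mathbf{u})$ and $J(\mathbf{u})$ depend infinitely differentiably on $\mathbf{u}$, while $\Phi(\mathbf{0}) = \mathcal{I}$ forces $\mathbf{F}(\mathbf{0}) = \mathrm{I}$ and $J(\mathbf{0}) = 1$. Consequently $\| \mathbf{F} - \mathrm{I} \|_{W^{1,p}(\Omega)}$, $\| J - 1 \|_{W^{1,p}(\Omega_s)}$ and $\| J - 1 \|_{L^{\infty}(\Omega_s)}$ are all dominated by $\| \mathbf{u} \|_{W^{2,p}(\Omega_{mf})}$ and vanish as the deformation magnitude shrinks; by the elasticity estimate (\ref{estimate_u}) this magnitude is in turn controlled by the force data. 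Hence $D \to 0$ as $\| \mathbf{u} \|_{W^{2,p}} \to 0$, and each of the four physical parameters---ionic-strength change $|\kappa - \kappa_0|$, cavity volume $V_{mr}$, added charge $q_{max}$, and deformation magnitude---independently drives its corresponding summand to zero.

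The main obstacle here is conceptual rather than computational: the term $D$ is itself determined by $\mathbf{u}$, which is produced by the very forces being estimated, so a naive reading invites circularity. For this lemma, however, the deformation magnitude is treated as a free small parameter---precisely the quantity we are permitted to reduce---so no self-consistency is needed at this stage; the genuine coupling between force and deformation is deferred to the fixed-point map $S$ of Section~\ref{CoupledSystem}, where smallness of $D$ is recovered self-consistently. Assembling the four bounds and letting all parameters tend to zero then gives the asserted smallness of the total electrostatic body and surface forces.
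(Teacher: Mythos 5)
Your proposal is correct and follows essentially the same route as the paper: telescoping the decompositions (\ref{fb_decomposition})--(\ref{fs_decomposition}) through the four component estimates, then controlling the deformation-dependent terms via smooth dependence of $\mathbf{F}(\mathbf{u})$, $J(\mathbf{u})$ on $\mathbf{u}$ with $\mathbf{F}(\mathbf{0})=\mathrm{I}$, $J(\mathbf{0})=1$ (a Taylor-type bound by $\|\mathbf{u}\|_{W^{2,p}}$), exactly as the paper does on the set $X_p$. Your explicit remark that the deformation magnitude is a free small parameter here, with self-consistency deferred to the fixed-point map $S$, is a correct reading of how the paper uses this lemma in Section \ref{CoupledSystem}.
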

\begin{proof}
Following from its decomposition schemes (\ref{fb_decomposition},\ref{fs_decomposition}), the estimation of
total electrostatic body force and surface fore can be readily completed by combining their respective four
components estimated in the four subsections above. The estimates for these two forces have an identical form
\begin{eqnarray}
\| \mathbf{f}_b - \mathbf{f}_{b0} \|_{L^p(\Omega_{mf})} & \le & C \Big ( |\kappa - \kappa_0| +
\| \phi_{2} -  \phi_{1}\|_{W^{2,p}(\Omega_{s})} + \| \phi_{2} -  \phi_{1}\|_{W^{2,p}(\Omega_{s}')} \nonumber \\
 & & \qquad +~ V_{mr} + \| \mathbf{F} -\mathrm{I} \|_{W^{1,p}(\Omega)} +
      \| J-1\|_{W^{1,p}(\Omega)} \Big  ), \label{fb_final} \\
\hspace*{-0.5cm}
\| \mathbf{f}_s - \mathbf{f}_{s0} \|_{W^{1-1/p,p}(\Gamma_{f})} & \le & C \Big ( |\kappa - \kappa_0| +
\| \phi_{2} -  \phi_{1}\|_{W^{2,p}(\Omega_{s})} + \| \phi_{2} -  \phi_{1}\|_{W^{2,p}(\Omega_{s}')} \nonumber \\
 & & \qquad + ~V_{mr} + \| \mathbf{F} -\mathrm{I} \|_{W^{1,p}(\Omega)} +
      \| J-1\|_{W^{1,p}(\Omega)} \Big  ). \label{fs_final}
\end{eqnarray}
It is noticed in Eq.(\ref{phidiff3_2}) that both $\| \phi_{2} -  \phi_{1}\|_{W^{2,p}(\Omega_{s})}$ and
$ \| \phi_{2} -  \phi_{1}\|_{W^{2,p}(\Omega_{s}')}$ can be made arbitrarily small by adjusting the charges of
added molecule $\Omega_{mr}$. Moreover
$\mathbf{F}(\mathbf{0})(x) = 0, J(\mathbf{0})(x)=1$ follow from their definitions and both
functions are infinitely differentiable in the neighborhood of each function in
$$X_p = \{ \mathbf{u} \in W^{2,p}(\Omega_{mf}) | \| \mathbf{u} \|_{W^{2,p}(\Omega_{mf})} \le M \}.$$
Applying the Taylor inequality we have
\begin{eqnarray*}
\| \mathbf{F} - \mathrm{I} \|_{W^{1,p}(\Omega_s)} & \le & \|| D\mathbf{F} | \| \| \mathbf{u} \|_{W^{2,p}(\Omega_s)}
 \\
\| J - 1 \|_{L^{\infty}} & \le & \|| D\mathbf{J} | \| \| \mathbf{u} \|_{W^{2,p}(\Omega_s)},
\end{eqnarray*}
hence the last two items in estimates (\ref{fb_final}, \ref{fs_final}) are also small for properly chosen $X_p$. \qed
\end{proof}

\section{Main Results: Existence of Solutions to the Coupled System} \label{CoupledSystem}
We now establish the main existence result in the paper.
It is noticed that for every element $\mathbf{v} \in X_p$ one can derive
a Piola transformation and solve for a unique potential solution of the
Poisson-Boltzmann equation with this Piola transformation.
The electrostatic forces computed from this potential solution belongs
to $W^{1-1/p,p}(\Gamma_f)$ hence there is also a unique solution
$\mathbf{u}$ to Eq.(\ref{Eq_elasticity}) corresponding to these electrostatic
forces.
This loop defines a map $S$ which associates every $\mathbf{v}$ with a new
displacement function $\mathbf{u}$.
Our existence result is based on the following version of the
Schauder fixed-point theorem.
\begin{theorem}\label{schauder}
Let $X_p$ be a closed convex set in a Banach space $X$ and let $S$ be
a continuous mapping of $X_p$ into itself such that the image of
$S(X_p)$ is relatively compact.
Then $S$ has a fixed-point in $X_p$.
\end{theorem}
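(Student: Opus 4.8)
The plan is to recognize the statement as the classical Schauder fixed-point theorem and to prove it by the standard route: approximate the compact-valued map $S$ by maps with finite-dimensional range, invoke Brouwer's theorem in each finite dimension, and then pass to the limit using the compactness hypothesis. No result from earlier in the paper is needed for the abstract statement itself; those results enter only when this theorem is later applied to the displacement map $S$.

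First I would set $K = \overline{S(X_p)}$, which is compact by hypothesis and satisfies $K \subseteq X_p$ because $S$ maps $X_p$ into itself and $X_p$ is closed. To manufacture finite-dimensional approximations, I fix $\epsilon > 0$ and use compactness of $K$ to extract a finite $\epsilon$-net $\{y_1,\dots,y_N\} \subseteq K$ with $K \subseteq \bigcup_{i=1}^N B(y_i,\epsilon)$. The key device is the Schauder projection $P_\epsilon : K \to \mathrm{conv}\{y_1,\dots,y_N\}$,
\[
P_\epsilon(y) = \frac{\sum_{i=1}^N \mu_i(y)\,y_i}{\sum_{i=1}^N \mu_i(y)}, \qquad \mu_i(y) = \max\{\epsilon - \|y - y_i\|,\,0\},
\]
whose denominator never vanishes on $K$, which is therefore continuous, and which satisfies $\|P_\epsilon(y) - y\| \le \epsilon$ for every $y \in K$ because $P_\epsilon(y)$ is a convex combination only of those centers lying within distance $\epsilon$ of $y$.

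Next I would put $X_\epsilon = \mathrm{conv}\{y_1,\dots,y_N\}$; since the centers lie in $K \subseteq X_p$ and $X_p$ is convex, $X_\epsilon \subseteq X_p$, and $X_\epsilon$ is a compact convex subset of a finite-dimensional subspace of $X$. The composite $S_\epsilon = P_\epsilon \circ S$ then maps $X_\epsilon$ continuously into $X_\epsilon$, so identifying the affine span of $X_\epsilon$ with some $\mathbb{R}^m$ and applying Brouwer's fixed-point theorem yields $x_\epsilon \in X_\epsilon$ with $x_\epsilon = P_\epsilon(S(x_\epsilon))$, and hence $\|x_\epsilon - S(x_\epsilon)\| = \|P_\epsilon(S(x_\epsilon)) - S(x_\epsilon)\| \le \epsilon$. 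Taking $\epsilon = 1/n$ produces $x_n \in X_p$ with $\|x_n - S(x_n)\| \le 1/n$; since $S(x_n) \in K$ is precompact, a subsequence gives $S(x_{n_k}) \to z \in K \subseteq X_p$, whence $x_{n_k} \to z$ as well, and continuity of $S$ forces $S(z) = z$.

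The main obstacle is the Schauder projection step: one must exhibit a continuous finite-range map that is uniformly $\epsilon$-close to the identity on $K$, since this is precisely what reduces the infinite-dimensional problem to Brouwer's theorem, and the compactness of $S(X_p)$ is then exactly the ingredient that survives the limit $\epsilon \to 0$. When this abstract theorem is subsequently applied in the paper, verifying its three hypotheses for the displacement map $S$ on $X_p = \{\mathbf{u} \in W^{2,p}(\Omega_{mf}) : \|\mathbf{u}\|_{W^{2,p}(\Omega_{mf})} \le M\}$ will rest on the elasticity estimate \eqref{estimate_u}, the smallness of the force components in \eqref{fb_final}--\eqref{fs_final}, and the compact embedding of $W^{2,p}(\Omega_{mf})$ into a space of lower regularity, which supplies the relative compactness of $S(X_p)$.
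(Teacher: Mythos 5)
Your proof is correct, and it is in fact a complete argument where the paper offers none: the paper's entire ``proof'' of this theorem is the single line ``See \cite{Zeid91a}.'' What you have reconstructed is the standard proof from the cited literature: set $K=\overline{S(X_p)}\subseteq X_p$ (compact, and contained in $X_p$ since $X_p$ is closed), build the Schauder projection $P_\epsilon$ from a finite $\epsilon$-net $\{y_1,\dots,y_N\}\subseteq K$, apply Brouwer's fixed-point theorem to the continuous self-map $P_\epsilon\circ S$ of the compact convex finite-dimensional set $\mathrm{conv}\{y_1,\dots,y_N\}\subseteq X_p$, and pass to the limit: the approximate fixed points satisfy $\|x_n-S(x_n)\|\le 1/n$, relative compactness of $S(X_p)$ gives $S(x_{n_k})\to z\in K\subseteq X_p$, hence $x_{n_k}\to z$, and continuity of $S$ forces $S(z)=z$. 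The individual steps are justified correctly: the denominator of $P_\epsilon$ is positive on $K$ precisely because the $y_i$ form an $\epsilon$-net, $\|P_\epsilon(y)-y\|\le\epsilon$ because only centers within distance $\epsilon$ of $y$ receive positive weight, and Brouwer's theorem does apply to a nonempty compact convex subset of a finite-dimensional space. The only implicit (and harmless) assumption is that $X_p$ is nonempty. One remark on your closing paragraph about the application: when the paper actually invokes this theorem, it verifies \emph{weak} continuity of $S$ (Theorem \ref{s_continuous}) and uses weak compactness of the bounded, closed, convex set $X_p$ in the reflexive space $W^{2,p}$; that is, it effectively applies a Schauder--Tychonoff variant in the weak topology rather than deriving norm relative compactness of $S(X_p)$ from a compact embedding, as you propose. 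That distinction matters for how the hypotheses are checked in the application, but it does not affect the correctness of your proof of the abstract statement itself.
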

\begin{proof}
See \cite{Zeid91a}.
\end{proof}
The Schauder Theorem depends on establishing continuity and
compactness of the map $S:X_p \rightarrow X_p$.
We notice that $X_p$ is convex and is weakly compact in $W^{2,p}$.
Therefore the mapping $S$ has at least one fixed-point in $X_p$ if we can
verify that $S$ is continuous in some weak topology.
\begin{theorem} \label{s_continuous}
$S: X_p \rightarrow X_p$ is weakly continuous in $W^{2,p}(\Omega_{mf})$.
\end{theorem}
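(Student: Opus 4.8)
The plan is to prove the stronger statement that $S$ is \emph{sequentially weakly continuous}: if $\mathbf{v}_k \rightharpoonup \mathbf{v}$ in $W^{2,p}(\Omega_{mf})$, then $S(\mathbf{v}_k) \rightharpoonup S(\mathbf{v})$. Since $S(X_p)\subset X_p$ is bounded in $W^{2,p}(\Omega_{mf})$, each subsequence of $\{S(\mathbf{v}_k)\}$ has a further subsequence converging weakly to some $\mathbf{u}^\ast$, and it suffices to show every such weak limit equals $S(\mathbf{v})$; the full sequence then converges weakly to $S(\mathbf{v})$. The engine of the argument is the compact embedding $W^{2,p}(\Omega_{mf})\hookrightarrow\hookrightarrow C^{1,\beta}(\overline{\Omega}_{mf})$ for $p>3$ and $\beta<1-3/p$, which upgrades $\mathbf{v}_k\rightharpoonup\mathbf{v}$ to the strong convergence $\mathbf{v}_k\to\mathbf{v}$ in $C^1(\overline{\Omega}_{mf})$; I will propagate this strong convergence along the chain $\mathbf{v}\mapsto\Phi(\mathbf{v})\mapsto\phi\mapsto(\mathbf{f}_b,\mathbf{f}_s)\mapsto S(\mathbf{v})$ and identify each weak limit using the uniqueness results already established.

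First I would pass to the Piola data. The traces $\mathbf{v}_k|_{\Gamma_f}\to\mathbf{v}|_{\Gamma_f}$ converge in $C^{1,\beta}(\Gamma_f)$, so by continuous dependence of the harmonic extension \eqref{w_def} on its Dirichlet data the exterior maps $\mathbf{w}_k\to\mathbf{w}$ converge in $C^1$ up to $\Gamma_f$ and in $C^\infty_{\mathrm{loc}}(\mathbb{R}^3\setminus\overline{\Omega}_{mf})$. Because $\nabla\Phi$ stays uniformly invertible for $\|\mathbf{v}_k\|_{W^{2,p}}\le M$ by Lemma \ref{piola}(b), and $\mathbf{F},J$ are smooth functions of $\nabla\Phi$ (the Remark following Lemma \ref{piola}), I obtain $\mathbf{F}(\mathbf{v}_k)\to\mathbf{F}(\mathbf{v})$ and $J(\mathbf{v}_k)\to J(\mathbf{v})$ strongly in $C^0(\overline{\Omega})$, while remaining bounded in $W^{1,p}$; in particular $\|\mathbf{F}(\mathbf{v}_k)-\mathrm{I}\|_{W^{1,p}(\Omega)}\le C_f$ holds uniformly, so Theorems \ref{reg_LPBeq_theorem} and \ref{reg_NLPBeq_theorem} apply to every member of the sequence. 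Note carefully that only the \emph{uniform} ($C^0$) convergence of $\mathbf{F}(\mathbf{v}_k),J(\mathbf{v}_k)$ is strong; their first derivatives involve $\nabla^2\mathbf{v}_k$ and converge merely weakly in $L^p(\Omega_{mf})$.

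Next I would identify the limit of the transformed potentials. The a priori estimate \eqref{pr_nlpbe_estimate} together with the $L^\infty$ bound \eqref{linfty_nlpbe_estimate} keeps $\phi_k=\phi(\mathbf{v}_k)$ bounded in $\mathcal{W}^{2,p}(\Omega)$, so along the subsequence $\phi_k\rightharpoonup\phi^\ast$ weakly in $\mathcal{W}^{2,p}(\Omega)$ and strongly in $C^0$ by the piecewise compact embedding $\mathcal{W}^{2,p}\hookrightarrow\hookrightarrow C^0$. Passing to the limit in the weak formulation \eqref{pr_eq_weak}, the principal term $\int_\Omega \epsilon\,\mathbf{F}(\mathbf{v}_k)\nabla\phi_k\cdot\nabla\psi$ converges because it pairs the uniformly convergent coefficient $\mathbf{F}(\mathbf{v}_k)$ with the weakly convergent $\nabla\phi_k$, and the nonlinear term $\int_\Omega J(\mathbf{v}_k)\kappa^2\sinh(\phi_k+G)\psi$ converges because $\phi_k$ and $J(\mathbf{v}_k)$ converge uniformly; the source $\langle f_{G},\psi\rangle$ converges likewise. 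Hence $\phi^\ast$ is a weak solution of \eqref{reg_NLPBeq} for the limiting coefficients, and by the uniqueness in Theorem \ref{E_minimizer} (equivalently Theorem \ref{reg_NLPBeq_theorem}) we get $\phi^\ast=\phi(\mathbf{v})$; in particular the pointwise reaction values $\phi_k^r(x_i)\to\phi^r(x_i)$ converge.

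Finally I would close the loop through the force map and the elasticity solve. The body force $\mathbf{f}_b(\mathbf{v}_k)\to\mathbf{f}_b(\mathbf{v})$ converges strongly in $L^p(\Omega_{mf})$, since it is the Gaussian of the pointwise total force and Lemma \ref{lemma_fb_lp} provides continuity of that construction. The surface force $\mathbf{f}_s(\mathbf{v}_k)$ converges only \emph{weakly} in $W^{1-1/p,p}(\Gamma_f)$, because its trace involves the products $\mathbf{F}(\mathbf{v}_k)\nabla\phi_k$ whose $W^{1,p}(\Omega_s)$ convergence (via the singularity-free strip $\Omega_s^-$ as in the Remark after \eqref{fs_def}) is only weak; this weak-only convergence of the surface force is the main obstacle, and is precisely why strong continuity of $S$ cannot be expected. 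It is nonetheless enough: writing the elasticity problem \eqref{Eq_elasticity} for $\mathbf{u}_k=S(\mathbf{v}_k)$ in weak form, the forces enter as a bounded linear functional on the test space, so weak convergence of $\mathbf{f}_s(\mathbf{v}_k)$ and strong convergence of $\mathbf{f}_b(\mathbf{v}_k)$ suffice to pass to the limit on the right-hand side, while the compact embedding again gives $\mathbf{u}_k\to\mathbf{u}^\ast$ in $C^1(\overline{\Omega}_{mf})$ so that the nonlinear stress operator $\mathbf{T}(\mathbf{u}_k)$ passes to the limit. Since the forces remain in the small neighborhood of $0$ for which Theorem \ref{thm_3Delasticity} guarantees a unique solution, the limit satisfies $\mathbf{u}^\ast=S(\mathbf{v})$, completing the subsequence identification and hence the proof that $S(\mathbf{v}_k)\rightharpoonup S(\mathbf{v})$.
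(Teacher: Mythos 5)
Your proof is correct and follows essentially the same route as the paper's: extract a subsequence, use the compact embedding $W^{2,p}\hookrightarrow\hookrightarrow C^{1}$ to upgrade weak convergence of $\mathbf{v}_k$ to strong $C^1$ convergence, deduce uniform convergence of $\Phi(\mathbf{v}_k)$, $\mathbf{F}(\mathbf{v}_k)$, $J(\mathbf{v}_k)$, pass the transformed Poisson--Boltzmann and elasticity equations to the limit, and identify the limits via the uniqueness theorems. Your write-up is in fact more careful than the paper's --- the subsequence-of-subsequences argument for whole-sequence convergence, the continuity of the harmonic extension, and the strong-versus-weak bookkeeping for $\mathbf{F}$, $J$ and $\mathbf{f}_s$ are all left implicit there --- but the underlying argument is the same.
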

\begin{proof} This proof follows the similar arguments in \cite{Grandmont_3Dcouple}.
Let $\mathbf{v}_n$ be a sequence in $X_p$ and $\mathbf{v}_n \weakconvg \mathbf{v}$ in $W^{2,p} $ as $n \rightarrow \infty$.
With these displacement fields, we can compute the electrostatic potential $\phi_n = \phi(\mathbf{v}_n)$(
hence the electrostatic body force $\mathbf{f}_{bn} = \mathbf{f}_b(\mathbf{v_n})$ and surface force
$\mathbf{f}_{sn} = \mathbf{f}_s(\mathbf{v_n})$) and
new displacement fields $\mathbf{u}_n = \mathbf{u}(\mathbf{v}_n)$ defining the mapping $S$. We know from
(\ref{fb_final}), (\ref{fs_final}) and (\ref{estimate_u})
that $\phi_n,\mathbf{f}_{bn},\mathbf{f}_{sn}$ and $\mathbf{u}_n$ are bounded independently of $n$. Therefore there exists a subsequence
$\mathbf{v}_{n_l} \subset \mathbf{v}_n$, an electrostatic potential $\bar{\phi}$, and a displacement field
$\bar{\mathbf{u}}$, such that
\begin{eqnarray*}
\phi(\mathbf{v}_{n_l}) \weakconvg \bar{\phi} ~\mbox{as}~ l \rightarrow \infty \\
\mathbf{u}(\mathbf{v}_{n_l}) \weakconvg \bar{\mathbf{u}} ~\mbox{as}~ l \rightarrow \infty
\end{eqnarray*}
We shall prove that $\mathbf{u}(\mathbf{v}) = \bar{\mathbf{u}}$ by investigating the limit of
the equations for $\phi(\mathbf{v}_{n_l})$ and $\mathbf{u}(\mathbf{v}_{n_l})$,
and of the expression for $\mathbf{F}(\mathbf{v}_{n_l}),\mathbf{f}_b(\mathbf{v}_{n_l}), \mathbf{f}_s(\mathbf{v}_{n_l})$
and $J(\mathbf{v}_{n_l})$. Since $\Phi(\mathbf{v}_{n_l}) \weakconvg \Phi(\mathbf{v})$ in the same weak topology
as $\mathbf{v}_{n_l} \weakconvg \mathbf{v}$ and $W^{2,p}$ is compactly embedded in $C^1$, there is a subsequence of
$\mathbf{v}_{n_k} \subset \mathbf{v}_{n_l}$ such that
\begin{eqnarray*}
\mathbf{v}_{n_k} \rightarrow \mathbf{v} ~\mbox{in}~C^1(\Omega)~\mbox{as}~ k \rightarrow \infty, \\
\Phi(\mathbf{v}_{n_k}) \rightarrow \Phi(\mathbf{v}) ~\mbox{in}~C^1(\Omega)~\mbox{as}~ k \rightarrow \infty;
\end{eqnarray*}
hence
\begin{eqnarray*}
J(\mathbf{v}_{n_k}) \rightarrow J(\mathbf{v}) ~\mbox{in}~C^0(\Omega)~\mbox{as}~ k \rightarrow \infty,
\end{eqnarray*}
following the definition of $J(\mathbf{v})$. The convergence of
\begin{eqnarray*}
\mathbf{F}(\mathbf{v}_{n_k}) \rightarrow \mathbf{F}(\mathbf{v}) ~\mbox{in}~C^0(\Omega)~\mbox{as}~ k \rightarrow \infty,
\end{eqnarray*}
which involves the inversion of $\nabla \Phi(\mathbf{v})$, is substantiated by continuous mapping
from a $n \times n$ matrix to its inverse in $C^0(\Omega)$, i.e.,
\begin{eqnarray*}
A_{n \times n} \in C^0(\Omega) \mapsto A^{-1}_{n \times n} \in C^0(\Omega)
\end{eqnarray*}
in the neighborhood of each invertible matrix of $C^0(\Omega)$, and by the invertibility of
$\nabla \Phi(\mathbf{v}_{n_k})$ in $W^{1,p}(\Omega)$. Now we can pass the equations satisfied by
$\phi_{n_k}$ and $\mathbf{v}_{n_k}$ to the limit and deduce that
\begin{eqnarray*}
\phi(\mathbf{v}_{n_l}) \weakconvg \phi(\mathbf{v}) =  \bar{\phi}, \\
\mathbf{u}_{n_l} \weakconvg \mathbf{u}(\mathbf{v}) = \bar{\mathbf{u}}.
\end{eqnarray*}
This proves the continuity of mapping $S$ in the weak topology of $W^{2,p}$. \qed
\end{proof}
Finally we verify that $S(X_p) \subset X_p$. By connecting the force estimates (\ref{fb_final})--(\ref{fs_final}) and
the estimate of displacement $\mathbf{u}$ in theorem (\ref{thm_3Delasticity}) we observe that
\begin{eqnarray}
\| \mathbf{u} \|_{W^{2,p}} & \le & C \Big ( \| \mathbf{f}_b \|_{L^p(\Omega_{mf})} +
\| \mathbf{f}_s \|_{W^{1-1/p,p}(\Gamma_f)}  \Big ) \nonumber \\
  & \le & C \Big ( |\kappa - \kappa_0|
\| \phi_{2} -  \phi_{1}\|_{W^{2,p}(\Omega_{s})} +  \| \mathbf{F} -\mathrm{I} \|_{W^{1,p}(\Omega_s)} +
      \| J-1\|_{L^{\infty}} \Big )  \nonumber \\
& \le & M \le \frac{C}{\max \{ C1,C2 \} } \label{u_bounded}
\end{eqnarray}
for appropriately small change in ionic strength and in the charges in the added
molecules, where $C,C1,C2$ are the constants prescribed in inequality (\ref{M_bound_1}).
Thus we verified that $S(X_p) \subset X_p$ and $\Phi(\mathbf{u})$ is invertible.
This gives the main result in the paper as the following theorem.
\begin{theorem}
There exists a solution to the coupled nonlinear PDE system (\ref{Eq_elasticity}) and (\ref{PBE_2}) for sufficiently small $\kappa - \kappa_0$ and sufficiently small rigid molecule $\Omega_{mr}$ with sufficiently small charges.
\end{theorem}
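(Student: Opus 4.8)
The plan is to realize a solution of the coupled system as a fixed point of the self-map $S$ on the ball $X_p = \{ \mathbf{u} \in W^{2,p}(\Omega_{mf}) : \| \mathbf{u} \|_{W^{2,p}(\Omega_{mf})} \le M \}$, and to invoke the weak-topology version of Theorem~(\ref{schauder}) (the Schauder--Tychonoff theorem) in $W^{2,p}(\Omega_{mf})$. First I would note that since $3<p<\infty$ the space $W^{2,p}(\Omega_{mf})$ is reflexive, so the closed bounded convex ball $X_p$ is convex and weakly sequentially compact; endowed with the weak topology it is a compact convex subset of a locally convex space, precisely the setting in which the fixed-point theorem applies. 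The map $S$ is well defined on $X_p$: for each $\mathbf{v}\in X_p$, Lemma~(\ref{piola}) guarantees (for $M$ small) that $\Phi(\mathbf{v})$ is a $C^1$-diffeomorphism with invertible $\nabla\Phi$, hence $\mathbf{F}$ and $J$ are available, and the transformed Poisson--Boltzmann equation~(\ref{PBE_2}) admits a unique solution through the regular-component analysis of Theorems~(\ref{reg_LPBeq_theorem}) and~(\ref{reg_NLPBeq_theorem}); the induced electrostatic forces lie in $L^p(\Omega_{mf})\times W^{1-1/p,p}(\Gamma_f)$, so Theorem~(\ref{thm_3Delasticity}) produces a unique $\mathbf{u}=S(\mathbf{v})\in W^{2,p}(\Omega_{mf})$.

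The two remaining hypotheses I would verify as follows. For the self-mapping property $S(X_p)\subset X_p$, I would chain the force estimates~(\ref{fb_final})--(\ref{fs_final}) with the elasticity estimate~(\ref{estimate_u}) to obtain~(\ref{u_bounded}); the key point is that each term bounding the forces, namely $|\kappa-\kappa_0|$, the volume $V_{mr}$, the added charges, and the deformation-dependent quantities $\|\mathbf{F}-\mathrm{I}\|_{W^{1,p}}$ and $\|J-1\|_{W^{1,p}}$, can be made small by shrinking the data and $M$, using $\mathbf{F}(\mathbf{0})=\mathrm{I}$, $J(\mathbf{0})=1$ together with the Taylor inequality. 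For continuity I would simply cite Theorem~(\ref{s_continuous}), which gives weak continuity of $S$ on $X_p$. With weak compactness of $X_p$, the self-mapping property, and weak continuity of $S$ established, the fixed-point theorem yields $\mathbf{u}^{*}=S(\mathbf{u}^{*})$, and by construction the pair $(\mathbf{u}^{*},\phi(\mathbf{u}^{*}))$ solves~(\ref{Eq_elasticity}) and~(\ref{PBE_2}).

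The main obstacle is the simultaneous choice of the radius $M$ and the smallness thresholds on the data. The radius must be small enough that Lemma~(\ref{piola}) keeps $\nabla\Phi$ invertible, i.e.\ $M$ below the threshold appearing in~(\ref{M_bound_1}), so that $S$ is even defined; yet the self-mapping bound~(\ref{u_bounded}) requires the forces generated inside $X_p$ to produce a displacement still of norm at most $M$. Since the deformation-dependent contributions $\|\mathbf{F}-\mathrm{I}\|_{W^{1,p}}$ and $\|J-1\|$ are themselves controlled by $M$, the bound~(\ref{u_bounded}) has the schematic form $\|\mathbf{u}\|_{W^{2,p}}\le C(\text{data})+C'M$, and one must check this loop closes: the coefficient $C'$ of the deformation terms must allow absorption, and then the data $|\kappa-\kappa_0|$, $V_{mr}$ and the added charges $q_j$ must be chosen small enough that $C(\text{data})+C'M\le M$ while $M$ stays below the invertibility threshold of~(\ref{M_bound_1}). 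This consistency between the self-mapping radius and the diffeomorphism constraint is exactly what the smallness hypotheses in the statement encode, and resolving this circular dependence is the delicate step; everything else is assembly of the previously established estimates and the fixed-point machinery.
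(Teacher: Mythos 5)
Your proposal is correct and follows essentially the same route as the paper: the Schauder fixed-point theorem (Theorem~\ref{schauder}) applied to the map $S$ on the weakly compact convex ball $X_p$, with weak continuity from Theorem~\ref{s_continuous} and the self-mapping property $S(X_p)\subset X_p$ obtained by chaining the force estimates (\ref{fb_final})--(\ref{fs_final}) with the elasticity bound (\ref{estimate_u}) to get (\ref{u_bounded}) under the smallness hypotheses. Your closing discussion of reconciling the radius $M$ with the invertibility threshold (\ref{M_bound_1}) and absorbing the deformation-dependent terms is in fact a more explicit treatment of a point the paper handles only implicitly in (\ref{u_bounded}).
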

\begin{proof}
This follows from
Theorem~\ref{schauder} combined with Theorem~\ref{s_continuous}.
\end{proof}

\section{Variational Principle for Existence and/or Uniqueness}
In addition to the fixed point arguments, variational principles and
quasivariational inequalities are also widely used for analyzing
coupled systems of PDEs arising from multiphysics modeling. While
quasivariational inequalities are exclusively used for systems with
boundary conditions given by inequalities, a single
energy functional for the entire system is generally required for
the application of either of these two approaches, and the stationary point
of this energy functional with respect to each function shall produce the
corresponding differential equations
and all boundary conditions. This energy functional is usually given by
the total potential energy of the system, or by the sum of the potential energies
of each equation if these energies are compatible. While it remains
challenge to construct the total energy for our problem, we can give a
coupled weak form of the entire system:
\begin{eqnarray}
(Ax,y) = (\mathbf{f}_b,\mathbf{v})_{L^2(\Omega_{mf})} + (\mathbf{f}_s,\mathbf{v})_{L^2(\Gamma_{mf})}~
\forall~ y \in P, \label{varia_form}
\end{eqnarray}
where $x=(\mathbf{u},\phi), y = (\mathbf{v}, \psi)$ are in the product space $P$ of
$W^{2,p}(\Omega_{mf})$ for the displacement field $\mathbf{u}$ and the $W^{2,p}(\Omega)$
for the regular component of electrostatic potential
$\phi^r$, i.e., $P=W^{2,p}(\Omega_{mf}) \times W^{2,p}(\Omega)$, and the operator $A$ is
defined by
\begin{eqnarray}
(Ax,y) & = & (\mathbf{T}(\mathbf{u}), \mathbf{E}(\mathbf{v})) +
(\epsilon \mathbf{F} \nabla \phi, \nabla \phi) + (J \kappa^2 \sinh(\phi+G), \psi), \label{operator_A}
\end{eqnarray}
where the stress tensor $\mathbf{T}$ and the strain tensor $\mathbf{E}$ were given in
Eq. (\ref{Stru.eq.1}).

Unlike the piezoelectric problems to which variational principles
and quasivariational inequalities can be readily applied, we lack the
coupling of the electrostatic potential and elastic displacement at the level of
constitutive relations of the material \cite{Sofonea_piezo04,Han_piezo07}.
Instead, our electro-elastic coupling is through the electrostatic forces.
We note that variational principles have been formulated for a class of
fluid-solid interaction systems \cite{Xing_FS97}, which resemble our problem
in that the coupling is through the boundary conditions of the elasticity equation
instead of the constitutive relations. This will be examined for our problem in a future
work.

\section{Concluding Remarks}
In this paper we have proposed and carefully analyzed a nonlinear elasticity
model of deformation in macromolecules induced by electrostatic forces.
This was accomplished by coupling the nonlinear Poisson-Boltzmann equation
for the electrostatic potential field to the nonlinear elasticity equations
for elastic deformation.
The electrostatic of this coupled system is desribed by an
implicit solvation model, and the Piola transformation defined by the solution of
the elasticity equation is introduced into the Poisson-Boltzmann equation such
that both equations can be analyzed together in a undeformed configuration.
A key technical tool for coupling the two models is the use of an
harmonic extension of the elastic deformation field into the solvent
region of the combined domain.
Combining this technical tool with regularization techniques established in
\cite{Chen_rpbe} and a standard bootstrapping technique,
we showed that the Piola-transformed Poisson-Boltzmann equation is also
well-defined and the regular component of its solution has a
piecewise $W^{2,p}$-regularity.
This regularity matches that of the elastic deformation, giving access to
a Schauder fixed-point theorem-based analysis framework for rigorously
establishing the existence of solutions to this coupled nonlinear PDE
system for small perturbation in the ionic strength and for small added
charges.
The existence of large deformation for large perturbations in ionic strength
and/or charges can be obtained by combining our local result with general
continuation techniques for nonlinear elastic
deformation \cite{GS_book_continuation}.
Our Shauder-type existence proof technique did not require that we establish
a contraction property for the fixed-point mapping $S$; this results in losing
access to a uniqueness result for the coupled system, as well losing access
to a fixed error reduction property for numerical methods based on the
fixed-point mapping $S$.

The coupling of elastic deformation to the electrostatic field is of
great importance in modeling the conformational change in large
macromolecules. To put this into perspective, more comprehensive and
realistic continuum models for macromolecular conformational changes
can be developed based on the results in this article, for example,
by coupling the (stochastic) hydrodynamical forces from the Stokes or
Navier-Stokes equation, or including van der Waals forces
between closely positioned molecules. While mathematical models and
robust numerical methods have been well studied for steady state
fluid-structure interaction problems \cite{Grandmont_3Dcouple}, the
inclusion of van der Waals forces appears to be more straightforward
\cite{QCui_05channelgating}. A major concern in applying these
coupled models, however, is the determination of the elasticity
properties of macromolecules within the continuum framework, which
requires new theoretical models and quantitative comparisons between
the continuum modeling and the classical molecular dynamical
simulation and/or experiential measurements. In a future work we
will study the development of numerical methods for this coupled
system and apply this model to macromolecular systems where
electrostatic forces play a dominant role.

\section{Acknowledgment}
The authors are grateful to Gary Huber, Ben-Zhuo Lu and Axel
Malqvist for discussions and/or a critical reading of the
manuscript. The work of Y.C.Z. and J.A.M. was supported in part by
the National Institutes of Health, the National Science Foundation,
the Howard Hughes Medical Institute, the National Biomedical
Computing Resource, the National Science Foundation Center for
Theoretical Biological Physics, the San Diego Supercomputing Center,
the W. M. Keck Foundation, and Accelrys, Inc. M.J.H was supported in
part by NSF Awards 0411723, 022560 and 0511766, in part by DOE
Awards DE-FG02-04ER25620 and DE-FG02-05ER25707, and in part by NIH
Award P41RR08605.

\bibliographystyle{abbrv}
\bibliography{../bib/books,../bib/papers,../bib/mjh,../bib/library,../bib/ref-gn,../bib/coupling,../bib/pnp}


\vspace*{0.5cm}

\end{document}